\theoremstyle{definition} 
\newtheorem*{definition}{Definition} 
\newtheorem*{remark}{Remark}
\theoremstyle{plain}  
\newtheorem{theorem}{Theorem}[section]
\newtheorem{proposition}[theorem]{Proposition}
\newtheorem{corollary}[theorem]{Corollary}
\newtheorem{lemma}[theorem]{Lemma}
\DeclareMathOperator{\id}{id}
\DeclareMathOperator{\esssup}{\mbox{\rm{ess.sup}}}
\DeclareMathOperator{\QC}{QC}
\DeclareMathOperator{\AC}{AC}
\DeclareMathOperator{\QS}{QS}
\DeclareMathOperator{\Sym}{Sym}
\DeclareMathOperator{\Mob}{\mbox{\rm{M\"ob}}}
\DeclareMathOperator{\Diff}{Diff}
\DeclareMathOperator{\Homeo}{Homeo}
\DeclareMathOperator{\Hol}{Hol_2}
\DeclareMathOperator{\Bel}{Bel}
\DeclareMathOperator{\modulus}{mod}
\DeclareMathOperator{\D}{\mathbb D}
\DeclareMathOperator{\C}{\mathbb C}
\DeclareMathOperator{\R}{\mathbb R}
\DeclareMathOperator{\H2}{\mathbb H}
\DeclareMathOperator{\S1}{\mathbb S}
\DeclareMathOperator{\Chat}{\widehat {\mathbb C}}
\begin{document}

\title[Teich\-m\"ul\-ler space and diffeomorphisms]
{Teich\-m\"ul\-ler space of circle diffeomorphisms with H\"older continuous derivative}

\author[K. Matsuzaki]{Katsuhiko Matsuzaki}
\address{Department of Mathematics, School of Education, Waseda University, \endgraf
Shinjuku, Tokyo 169-8050, Japan}
\email{matsuzak@waseda.jp}

\makeatletter
\@namedef{subjclassname@2010}{%
\textup{2010} Mathematics Subject Classification}
\makeatother

\subjclass[2010]{Primary 30F60, 30C62, 32G15; Secondary 37E10, 58D05}
\keywords{universal Teich\-m\"ul\-ler space;
quasiconformal map; Beltrami coefficients; Schwarzian derivative; Bers embedding; 
quasisymmetric homeomorphism; circle diffeomorphism; H\"older continuous derivative}
\thanks{This work was supported by JSPS KAKENHI 25287021.}

\begin{abstract}
Based on the quasiconformal theory of the universal Teich\-m\"ul\-ler space,
we introduce the Teich\-m\"ul\-ler space of
diffeomorphisms of the unit circle with $\alpha$-H\"older continuous derivatives
as a subspace of the universal Teich\-m\"ul\-ler space. 
We characterize such a diffeomorphism quantitatively
in terms of the complex dilatation of its quasiconformal extension and
the Schwarzian derivative given by the Bers embedding.
Then, we provide a complex Banach manifold structure for it and
prove that its topology coincides with the one induced by local $C^{1+\alpha}$-topology at the base point.
\end{abstract}
 
\maketitle

\section{Introduction}\label{1}

Parametrization of orientation-preserving diffeomorphisms of the unit circle $\S1$ can be studied 
in the framework of the theory of Teich\-m\"ul\-ler spaces. In this case,
we utilize the universal Teich\-m\"ul\-ler space $T$, which can be regarded as the group $\QS$ of
quasisymmetric self-homeomorphisms 
of $\S1$ modulo post-composition of M\"obius transformations $\Mob(\S1)$. 
Here, a quasisymmetric self-homeomorphism of $\S1$ is the boundary extension of a quasiconformal self-homeomorphism of 
the unit disk $\D$.
The Teich\-m\"ul\-ler space of an arbitrary hyperbolic Riemann surface 
can be understood as the fixed point locus of the corresponding
Fuchsian group acting on $T$. 
If we replace the group invariance with certain regularity conditions for quasisymmetric homeomorphisms,
we can also embed the Teich\-m\"ul\-ler space of such a family of circle homeomorphisms in the 
universal Teich\-m\"ul\-ler space $T=\Mob(\S1) \backslash \QS$.

In this paper, we formulate the Teich\-m\"ul\-ler space $T^\alpha_0=\Mob(\S1) \backslash \Diff_+^{1+\alpha}(\S1)$ of circle diffeomorphisms 
with H\"older continuous derivatives of exponent $\alpha \in (0,1)$. 
We provide a complex Banach manifold structure for $T^\alpha_0$ and 
prove basic properties of this space.
The arguments for $T^\alpha_0$ are modeled on those for the universal Teich\-m\"ul\-ler space $T$
and certain refinements are imported from the theory for the little Teich\-m\"ul\-ler subspace $T_0=\Mob(\S1) \backslash \Sym$.
Here, the subgroup $\Sym \subset \QS$ consists of symmetric self-homeomorphisms $\S1$, which are 
the boundary extension of asymptotically conformal homeomorphisms of $\D$ whose complex dilatations vanish at the boundary $\S1$.
It contains all circle diffeomorphisms; hence, $\Diff_+^{1+\alpha}(\S1) \subset \Sym$.
We will survey necessary results on the universal Teich\-m\"ul\-ler space $T$ in Section \ref{2}
and on the little subspace $T_0$ in Section \ref{3}.

We first characterize circle diffeomorphisms with H\"older continuous derivatives
in terms of their quasiconformal extension to $\D$. 
This originates from the work of asymptotically conformal maps by Carleson \cite{Car}.
Later, Gardiner and Sullivan \cite{GS} developed
the theory of symmetric homeomorphisms of $\S1$
using previous results on quasiconformal extension and Schwarzian derivatives of univalent functions
in Becker and Pommerenke \cite{BP}. We will refine these results quantitatively concerning the
decay order of the corresponding maps vanishing at the boundary. 

We verify in Section \ref{4} that,
if the complex dilatation $\mu$ of an asymptotically conformal homeomorphism of $\D$ decays
in the order of $O((1-|z|)^\alpha)$ as $|z| \to 1$, then the hyperbolically weighted Schwarzian derivative 
of the developing map of the projective structure on the exterior disk $\D^*$ determined by $\mu$
decays exactly in the same order $\alpha$. 
This is carried out by
dividing the support of $\mu$ suitably into annular regions and estimating the pre-Schwarzian derivative of
the composition of conformal homeomorphisms.
A different proof was previously obtained by Dyn$'$kin \cite{Dy}, but we have to prepare
more precise estimates in terms of a weighted supremum norm of $\mu$ (Theorem \ref{basic1} and Corollary \ref{interior}).

In Section \ref{5}, we mainly consider one-dimensional properties of 
circle diffeomorphisms 
with H\"older continuous derivatives.
We first provide a topology for $\Diff_+^{1+\alpha}(\S1)$ 
and see that it is a topological group (Proposition \ref{top}).
The topology is defined in a neighborhood of the identity map by $C^{1+\alpha}$-convergence and then
distributed to every point by the right translation of the group.
For the characterization of an element of $\Diff_+^{1+\alpha}(\S1)$, 
a result of Carleson \cite{Car} plays an important role, as it
gives a connection between the H\"older continuity of the derivative and the quasisymmetry quotient of $g$.
We review his theorem and supply necessary claims for our arguments.

A fundamental result is that if the complex dilatation of
an asymptotically conformal homeomorphism of $\D$ decays 
in the order of $O((1-|z|)^\alpha)$, then the regularity of its boundary extension $g$ to $\S1$ is exactly $C^{1+\alpha}$.
Carleson found that it is at least $C^{1+\alpha/2}$.
This problem was investigated further by Anderson and Hinkkanen \cite{AH} among others,
and settled qualitatively by Dyn$'$kin \cite{Dy} 
and Anderson, Cant\'on, and Fern\'andez \cite{ACF}.
Combined with the aforementioned results, this can be summarized 
as follows (Theorem \ref{main}). 

\begin{theorem}\label{th1}
Let $\alpha$ be a constant with $0<\alpha<1$.
The following conditions are equivalent for $g \in \QS$:
\begin{itemize}
\item[$(1)$]
$g$ is a diffeomorphism of $\S1$ with H\"older continuous derivative of exponent $\alpha$;
\item[$(2)$]
$g$ extends continuously to a quasiconformal self-homeomorphism of $\D$ whose complex dilatation $\mu(z)$ decays
in the order of $O((1-|z|)^\alpha)$ as $z \in \D$ tends to the boundary;
\item[$(3)$]
the Schwarzian derivative $\varphi(z)$ of the conformal homeomorphism of $\D^*$ determined by $g$ behaves
in the order of $O((|z|-1)^{-2+\alpha})$ as $z \in \D^*$ tends to the boundary.
\end{itemize}
\end{theorem}

In Section \ref{6}, 
we will improve on Theorem \ref{th1} 
with a different proof, which is necessary for the arguments of Teichm\"uller spaces
(Theorem \ref{key}).
Our strategy is to represent a circle diffeomorphism $g$ by conformal welding, which was originally 
proposed by Anderson, Becker, and Lesley \cite{ABL}. For the argument in this method, we need to know that
an asymptotically conformal self-homeomorphism $f$ of $\D$ and its inverse mapping $f^{-1}$ have the complex dilatations
of order $O((1-|z|)^\alpha)$ at the same time.
For this purpose,
we extend the consequence of the Mori theorem to
a quasiconformal self-homeomorphism $f$ of $\D$ with complex dilatation 
of order $O((1-|z|)^\alpha)$. The result is that $1-|f(z)|$ is comparable to $1-|z|$
without the power of the maximal dilatation $K(f)$ (Theorem \ref{mori}). This guarantees that the complex dilatation of
$f^{-1}$ is also of order $O((1-|z|)^\alpha)$.

\begin{theorem}\label{th0}
Let $f$ be a quasiconformal self-homeomorphism of $\D$ with $f(0)=0$ 
whose complex dilatation $\mu(z)$ satisfies $|\mu(z)| \leq \ell (1-|z|)^\alpha$
almost every $z \in \D$ for some $\ell \geq 0$.
Then, there is a constant $A \geq 1$ depending only on $K(f)$, $\alpha$, and $\ell$ such that
$$
\frac{1}{A}(1-|z|) \leq 1-|f(z)| \leq A(1-|z|)
$$
for every $z \in \D$.
\end{theorem}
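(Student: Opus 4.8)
\emph{Plan of proof.} The two inequalities together are equivalent to a single uniform estimate
$$
\left| d_\D(0,z) - d_\D\big(0,f(z)\big) \right| \leq \log A ,
$$
where $d_\D$ is the hyperbolic distance; indeed $f(0)=0$ and $1-|w| \asymp e^{-2 d_\D(0,w)}$ with absolute comparison constants, so controlling the hyperbolic distance to the origin additively is the same as controlling the ratio $(1-|f(z)|)/(1-|z|)$ from above and below. This is the right way to see what must be gained: a general $K$-quasiconformal self-map of $\D$ fixing $0$ is only a hyperbolic quasi-isometry with multiplicative constant $K$, and that is exactly what produces the Mori-type power law $1-|f(z)| \asymp (1-|z|)^{1/K}$ with the exponent $K^{\pm 1}$. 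The content of the theorem is that the decay $|\mu(z)| \leq \ell(1-|z|)^\alpha$ upgrades this multiplicative distortion to a bounded \emph{additive} one, thereby removing the power of $K(f)$. Since the region $\{|z|\le 1/2\}$ is compact and there the distortion of $1-|f(z)|$ is controlled above and below by Mori and Koebe type estimates with constants depending only on $K(f)$, I may reduce to the behaviour as $|z|\to 1$.

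The core mechanism is a dyadic shell decomposition together with a convergent telescoping sum. Put $r_n = 1-2^{-n}$ and $A_n = \{\, r_n \leq |z| < r_{n+1}\,\}$, a ring of bounded conformal modulus $\modulus A_n = \tfrac{1}{2\pi}\log(r_{n+1}/r_n)$. On $A_n$ one has $|\mu(z)| \leq \ell\, 2^{-n\alpha}$, so the \emph{local} maximal dilatation, the essential supremum of $(1+|\mu|)/(1-|\mu|)$ over the shell, satisfies
$$
K_n \;\leq\; \frac{1+\ell\, 2^{-n\alpha}}{1-\ell\, 2^{-n\alpha}} \;=\; 1 + O\big(2^{-n\alpha}\big)
\qquad (n\to\infty).
$$
Applying the Gr\"otzsch and Teichm\"uller ring-modulus inequalities to $f$ restricted to the shell gives a two-sided comparison: the modulus of the image ring $f(A_n)$ differs from $\modulus A_n$ by a factor lying in $[K_n^{-1},K_n]$, hence the hyperbolic width that $f$ advances across $A_n$ deviates from that of $A_n$ by a factor of the same size. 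Summing over the shells up to the one containing $z$, the total logarithmic deviation is bounded by
$$
\sum_{n} |\log K_n| \;\leq\; C\sum_{n} 2^{-n\alpha} \;<\; \infty ,
$$
and this convergence, equivalent to the finiteness of the Dini-type integral $\int_0^1 (1-t)^{\alpha-1}\,dt = 1/\alpha$, is precisely where the exponent $\alpha>0$ enters: mere boundedness of $\mu$ would make the series diverge and would only reproduce the power law, whereas the genuine decay makes the per-shell errors add to a finite total. The resulting bound depends only on $\ell$, $\alpha$ (through $C/\alpha$) and on $K(f)$ (through the compact part), as required.

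The step I expect to be the main obstacle is the passage, on each shell, from distortion of \emph{modulus} to additive control of $\log\big(1-|f(z)|\big)$: the images $f(A_n)$ are not round annuli, and one must keep their geometry under control—e.g.\ by invoking Teichm\"uller's modulus theorem and by recentering with M\"obius automorphisms of $\D$ so that a localized Mori estimate applies on each shell—so as to guarantee that the small per-shell distortions genuinely \emph{add} rather than compound into a power of $K(f)$. Once the uniform bound on $\big|d_\D(0,z)-d_\D(0,f(z))\big|$ is in hand, exponentiating yields the stated inequalities with $A=e^{\,\mathrm{const}(K(f),\alpha,\ell)}$. I emphasize that because the ring-modulus inequality is two-sided, this argument produces both the upper and the lower bound simultaneously and uses only the dilatation of $f$ itself; it requires no a priori information on the dilatation of $f^{-1}$, which is essential, since securing the regularity of $f^{-1}$ is the very purpose for which this theorem is needed.
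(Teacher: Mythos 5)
There is a genuine gap, and it sits exactly where you flag ``the main obstacle.'' Your central mechanism is: on each shell $A_n$ the local dilatation is $K_n=1+O(2^{-n\alpha})$, the modulus of $f(A_n)$ is within a factor $K_n^{\pm1}$ of $\modulus A_n$, ``hence the hyperbolic width that $f$ advances across $A_n$ deviates from that of $A_n$ by a factor of the same size,'' and the deviations sum. The quoted step is not justified, and the two quantities it equates are not comparable for these shells: for $A_n=\{1-2^{-n}\le|z|<1-2^{-n-1}\}$ the conformal modulus is $\tfrac{1}{2\pi}\log\frac{1-2^{-n-1}}{1-2^{-n}}\approx 2^{-n-1}/(2\pi)\to 0$, while the hyperbolic width is $\approx\log 2$; a multiplicative perturbation of a quantity tending to $0$ carries no information about a quantity of constant size. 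Even if one works instead with the nested rings $R(s)=\{s<|z|<1\}$, the two-sided modulus comparison only yields $\tfrac{1}{2\pi}\log(1/w^+)\le\modulus f(R(s))\le\tfrac{1}{2\pi}\log(1/w^-)$, where $w^{\pm}$ are the maximum and minimum of $|f|$ on $\{|z|=s\}$; this bounds $1-w^+$ from above and $1-w^-$ from below, which are the wrong directions, and gives no control whatsoever on the eccentricity $(1-w^-)/(1-w^+)$ of the image circle. Controlling that eccentricity --- the ``roundness'' of $f(\{|z|=s\})$ --- is precisely where the power of $K(f)$ re-enters in the classical Mori argument, because the ring domains used to pin down the shape of an image circle reach into the bulk of $\D$ where only the global bound $K(f)$ is available. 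The fix you gesture at (Teichm\"uller's modulus theorem plus M\"obius recentering so that ``a localized Mori estimate applies on each shell'') is not an argument: the localized Mori estimate with constant $1+O(2^{-n\alpha})$ on each shell is essentially the statement being proved, so as written the proposal is circular at its only hard point. The correct moral intuition --- that the gain comes from summability of $\sum 2^{-n\alpha}$, i.e.\ from $\int_0^1(1-t)^{\alpha-1}dt<\infty$ --- is present, but the bookkeeping that would realize it is missing.

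For comparison, the paper avoids any shell summation. For a fixed $z$ with $t=1-|z|$ it makes a single splitting $f=h_t\circ f_t$, where $f_t$ carries the dilatation of $f$ only on $\{|\zeta|\le 1-\sqrt t\}$ and is therefore conformal on the collar $\{1-\sqrt t<|\zeta|<1\}$; the Koebe distortion theorem on the disk $\Delta(z/|z|,\sqrt t)$, combined with the uniform two-sided bound $D^{-1}\le|f_t'|\le D$ on $\S1$ supplied by Lemma \ref{sofar} (with $D$ independent of $t$), gives $1-|f_t(z)|\asymp t$ directly. The correction $h_t$ has global maximal dilatation $K_t\le 1+4\ell t^{\alpha/2}$, so the crude Mori bound $(1-|w|)^{K_t^{\pm1}}$ applied at $1-|w|\asymp t$ costs only a factor $t^{\pm O(t^{\alpha/2})}=e^{O(t^{\alpha/2}\log(1/t))}$, which is bounded. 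Note that this route does use one-dimensional input (the boundary derivative bound from Lemma \ref{sofar}, resting on Carleson's theorem), whereas your plan hoped to use only the dilatation of $f$ itself; that would indeed be a more self-contained argument if it could be completed, but as it stands the passage from local modulus distortion to additive control of $\log(1-|f(z)|)$ is not established.
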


For Beltrami coefficients and Schwarzian derivatives as above, we prepare the following spaces:
$\Bel^\alpha_0(\D)$ is the space of Beltrami coefficients $\mu$ on $\D$ with a finite norm 
$\Vert \mu \Vert_{\infty,\alpha}=\esssup \rho_{\D}^\alpha(z)|\mu(z)|$; and $B^\alpha_0(\D^*)$ is 
the Banach space of holomorphic quadratic differentials $\varphi=\varphi(z)dz^2$ on $\D^*$ with a finite norm
$\Vert \varphi \Vert_{\infty,\alpha}=\sup \rho_{\D^*}^{-2+\alpha}(z)  |\varphi(z)|$.
Here, $\rho_{\bullet}$ denotes the hyperbolic density of each space.
Then, Theorem \ref{th1} implies that the Teich\-m\"ul\-ler projection $\pi$,
the Bers projection $\Phi$, and the Bers embedding $\beta$ for the universal Teich\-m\"ul\-ler space $T$
also work for our spaces by restriction of the original mappings:
$$
\xymatrix{
& \Bel_0^\alpha(\D) \ar[ld]^{\pi} \ar[rd]_{\Phi} & \\
T_0^\alpha=\Mob(\S1) \backslash  \Diff_+^{1+\alpha}(\S1)  \ar[rr]_{\beta} & & \beta(T) \cap B_0^\alpha(\D^*) \\
}
$$

The topology on $T_0^\alpha$ is induced from $\Bel_0^\alpha(\D)$ by $\pi$.
If we regard $T_0^\alpha$ as the subgroup of $\Diff_+^{1+\alpha}(\S1)$ consisting of normalized elements, then we can also provide it with
the right uniform topology of $\Diff_+^{1+\alpha}(\S1)$,
which is generated by the right translations
of the local $C^{1+\alpha}$-topology
at the identity. 
In Section \ref{8}, we prove that these topologies on $T_0^\alpha$ are the same
(Theorem \ref{topology}).

\begin{theorem}\label{th2}
The quotient topology on $T_0^\alpha$
induced by $\pi:\Bel_0^\alpha(\D) \to T_0^\alpha$ coincides with the right uniform topology of $\Diff_+^{1+\alpha}(\S1)$ and, 
in particular, 
$T_0^\alpha$ is a topological group.
\end{theorem}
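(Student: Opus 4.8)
The plan is to show that the identity on $T_0^\alpha$ is a homeomorphism between the quotient topology induced by $\pi\colon \Bel_0^\alpha(\D)\to T_0^\alpha$ and the $\Diff^{1+\alpha}$-topology; the concluding assertion that $T_0^\alpha$ is a topological group is then immediate, since by Proposition \ref{top} the right uniform topology is already that of a topological group on the normalized subgroup of $\Diff^{1+\alpha}(\S1)$. Throughout, write $g_\mu$ for the normalized boundary diffeomorphism determined by $\mu\in\Bel_0^\alpha(\D)$ and $\varphi_g$ for the Schwarzian derivative attached to $g$ as in Proposition \ref{th1}. Comparing the two topologies amounts to showing that every $\Diff^{1+\alpha}$-open set is quotient-open and conversely, which I will obtain respectively from the continuity of $\pi$ and from the existence of a continuous section of $\pi$.

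For the first inclusion I would prove that $\pi$ is continuous into the $\Diff^{1+\alpha}$-topology, for then $\pi^{-1}(N)$ is open whenever $N$ is $\Diff^{1+\alpha}$-open, so $N$ is quotient-open. Concretely, if $\Vert\mu_n-\mu\Vert_{\infty,\alpha}\to 0$ I must show $g_{\mu_n}\to g_\mu$ in the right uniform topology, that is $g_{\mu_n}\circ g_\mu^{-1}\to\id$ in $C^{1+\alpha}$. Passing to $h_n=f^{\mu_n}\circ(f^{\mu})^{-1}$ and using the composition rule for complex dilatations, the dilatation of $h_n$ tends to $0$ in $\Vert\cdot\Vert_{\infty,\alpha}$; here Theorem \ref{th0} is needed to see that $(f^{\mu})^{-1}$ again has dilatation of order $O((1-|z|)^\alpha)$ and that the weight $\rho_\D^\alpha$ is distorted only by a bounded factor under the change of variables. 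The quantitative form of the implication $(2)\Rightarrow(1)$ in Proposition \ref{th1} then converts the smallness of the dilatation into $C^{1+\alpha}$-smallness of the boundary map, giving $g_{\mu_n}\circ g_\mu^{-1}\to\id$.

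For the converse inclusion I would exhibit a continuous section $s$ of $\pi$ for the $\Diff^{1+\alpha}$-topology, since then every quotient-open $U=s^{-1}(\pi^{-1}(U))$ is $\Diff^{1+\alpha}$-open. Near the base point $[\id]$ the section is built from the Bers embedding: by the quantitative $(1)\Rightarrow(3)$, the map $g\mapsto\varphi_g$ is continuous from a $C^{1+\alpha}$-neighborhood of $\id$ into $B_0^\alpha(\D^*)$ with $\varphi_{\id}=0$, and the Ahlfors--Weill reflection sends such a $\varphi$ of small norm to a Beltrami coefficient $\mu_\varphi$ with $\Vert\mu_\varphi\Vert_{\infty,\alpha}\lesssim\Vert\varphi\Vert_{\infty,\alpha}$, because the factor $(1-|z|^2)^2$ in the reflection formula turns the admissible growth $(|z|-1)^{-2+\alpha}$ of $\varphi$ into the decay $(1-|z|)^\alpha$ of $\mu_\varphi$; as this reflection is a section of the Bers projection $\Phi$ and $\beta\circ\pi=\Phi$, one gets $\pi(\mu_{\varphi_g})=[g]$. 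To pass from this local section to arbitrary points I would use right translation: near a given $[g_0]$ one precomposes with $g_0^{-1}$ to return to a neighborhood of $[\id]$, applies the local section there, and translates back, the translation being continuous on $\Bel_0^\alpha(\D)$ by the same composition estimate as above, where Theorem \ref{th0} is again the decisive input.

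The main obstacle, I expect, lies not in this topological bookkeeping but in upgrading the characterizations of Sections \ref{4} and \ref{6} from assertions about a single map to genuinely \emph{uniform} estimates controlling differences, so that both $\mu\mapsto g_\mu$ and $g\mapsto\varphi_g$ are continuous, and not merely well defined, in the weighted norms $\Vert\cdot\Vert_{\infty,\alpha}$. Closely tied to this is the continuity of the composition and inversion of Beltrami coefficients for $\Vert\cdot\Vert_{\infty,\alpha}$: this is precisely where the strengthened Mori-type comparison $1-|f(z)|\asymp 1-|z|$ of Theorem \ref{th0}, with constants free of any power of $K(f)$, is indispensable, since an estimate that absorbed a power of the maximal dilatation would corrupt the exponent $\alpha$ and destroy the control needed for both the continuity of $\pi$ and the right-translation step.
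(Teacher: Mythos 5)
Your proposal is correct, and its first half is essentially the paper's argument: the paper also reduces $\mu_n\to\mu$ to $\mu_n\ast\mu^{-1}\to 0$ (Proposition \ref{r-translation}, whose proof rests on Theorem \ref{th0} exactly as you say) and then invokes the quantitative form of $(2)\Rightarrow(1)$, namely Theorem \ref{key}, to get $p_{1+\alpha}(g_n\circ g^{-1})\to 0$. Where you diverge is the converse inclusion. The paper goes directly: from $p_{1+\alpha}(\gamma_n)\to 0$ it extracts, via Carleson's quasisymmetric-quotient estimate and the Beurling--Ahlfors extension (Theorem \ref{diff-qc}), Beltrami coefficients $\nu_n$ with $\pi(\nu_n)=[\gamma_n]$ and $\Vert\nu_n\Vert_{\infty,\alpha}\to 0$, then translates back by the base point change. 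You instead build a local section of $\pi$ near $[\id]$ by composing $g\mapsto\varphi_g$ with the Ahlfors--Weill reflection (the paper's Lemma \ref{localsection}), and your norm estimate $\Vert\mu_\varphi\Vert_{\infty,\alpha}\lesssim\Vert\varphi\Vert_{\infty,\alpha}$ for that reflection is indeed what the paper proves there. This works, but note that it is a detour rather than an independent route: the quantitative $(1)\Rightarrow(3)$ you invoke is itself obtained in the paper by chaining $(1)\Rightarrow(2)$ (Theorem \ref{diff-qc}) with $(2)\Rightarrow(3)$ (Theorem \ref{basic1}), so the intermediate step of your own argument already hands you a Beltrami representative of small $\Vert\cdot\Vert_{\infty,\alpha}$-norm, making the passage through $B_0^\alpha(\D^*)$ and back superfluous for this theorem. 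What your formulation buys is a packaging of the converse direction as the existence of a continuous local section, which dovetails with the complex-structure statement (Theorem \ref{th3}) but costs the extra verification that the Ahlfors--Weill coefficient depends continuously on $\varphi$ in the weighted norm. Your closing diagnosis is accurate: the substantive content in either route is the uniformity of the estimates, and Theorem \ref{th0} is the point where a Mori-type power of $K$ would otherwise ruin the exponent $\alpha$.
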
 

The complex structure on $T_0^\alpha$ is given by showing that
the Bers embedding $\beta$ as above is a homeomorphism onto its image.
Moreover, we want to find that the base point change map (right translation) of $T_0^\alpha$ is 
compatible with this complex structure. To this end, we will prove that
the Bers projection $\Phi$ is a holomorphic split submersion.
To see that $\Phi$ is continuous, we use an integral representation of the Schwarzian derivative $\Phi(\mu)$,
which was originally proposed by Astala and Zinsmeister \cite{AZ}. Then, a careful estimate of this integral taking the dependence of constants
into account yields the assertion on continuity. The holomorphy is a consequence from the continuity in our situation.
To see that $\Phi$ is a split submersion, we construct a local holomorphic section of $\Phi$.
For the universal Teich\-m\"ul\-ler space (and Teich\-m\"ul\-ler spaces of Riemann surfaces), this was originally proved by Bers \cite{Ber},
and afterward certain modifications have been made to develop a standard argument.
We adapt this argument to our situation to show the continuity 
with respect to the topology in our spaces. In Section \ref{8}, we will prove the following:

\begin{theorem}\label{th3}
The Bers projection $\Phi:\Bel^\alpha_0(\D) \to B_0^\alpha(\D^*)$ is
a holomorphic split submersion onto its image. This implies that
the Bers embedding $\beta:T_0^\alpha \to B_0^\alpha(\D^*)$ is
a homeo\-morphism onto its image. With this complex structure of $T_0^\alpha$ identified with
a domain of the complex Banach space $B_0^\alpha(\D^*)$, 
every base point change map of $T_0^\alpha$ is a biholomorphic automorphism of $T_0^\alpha$.
\end{theorem}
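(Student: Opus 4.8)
The plan is to verify in turn that $\Phi$ is continuous, holomorphic, and a submersion, and then to read off the remaining assertions. \emph{Continuity} is the technical core. Following Astala and Zinsmeister \cite{AZ}, I would express $\Phi(\mu)$ through an integral representation of the Schwarzian derivative, namely an integral over $\D$ of an explicit kernel paired with $\mu$. The classical theory bounds this in the unweighted sup norm; here I would instead estimate it in the weighted norm $\Vert\cdot\Vert_{\infty,\alpha}$, splitting the domain of integration into annuli adapted to the hyperbolic distance and keeping explicit track of the dependence of the constants on $\Vert\mu\Vert_{\infty,\alpha}$, so as to conclude that $\mu_n\to\mu$ in $\Bel_0^\alpha(\D)$ forces $\Phi(\mu_n)\to\Phi(\mu)$ in $B_0^\alpha(\D^*)$. \emph{Holomorphy} is then a soft consequence: $\Phi$ is G\^ateaux-holomorphic exactly as in the universal case, and a G\^ateaux-holomorphic map between complex Banach spaces that is locally bounded is Fr\'echet-holomorphic.

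For the \emph{submersion} property I would construct local holomorphic sections. At the base point $0=\Phi(0)$ I use the Ahlfors--Weill reflection: to $\varphi\in B_0^\alpha(\D^*)$ of small norm assign the Beltrami coefficient $\mu_\varphi$ on $\D$ given by the standard Ahlfors--Weill formula. Comparing orders, the decay $|\varphi(z)|=O((|z|-1)^{-2+\alpha})$ becomes, after the involution $z\mapsto 1/\bar z$ and the weight $(1-|z|^2)^2$, the decay $|\mu_\varphi(z)|=O((1-|z|)^\alpha)$; hence $\varphi\mapsto\mu_\varphi$ carries a neighborhood of $0$ in $B_0^\alpha(\D^*)$ into $\Bel_0^\alpha(\D)$, is $\C$-linear and therefore holomorphic, and satisfies $\Phi(\mu_\varphi)=\varphi$. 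To obtain a section near an arbitrary $\Phi(\nu)$ I would transport this one by a base point change, as follows.

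The composition law for complex dilatations yields, for fixed $\nu\in\Bel_0^\alpha(\D)$, the right translation $R_\nu:\mu\mapsto\lambda$ determined by $f^\lambda=f^\mu\circ(f^\nu)^{-1}$. The decisive point is that $R_\nu$ preserves $\Bel_0^\alpha(\D)$: the resulting $\lambda$ is built from $\nu$ and from $\mu\circ(f^\nu)^{-1}$, and Theorem \ref{th0}, applied to $f^\nu$ and to its inverse, gives $1-|(f^\nu)^{-1}(z)|\asymp 1-|z|$, so that the order $(1-|z|)^\alpha$ is preserved under the composition. Just as in the universal Teich\-m\"ul\-ler space, $R_\nu$ is biholomorphic on $\Bel_0^\alpha(\D)$, with inverse the translation by the dilatation of $(f^\nu)^{-1}$; composing the Ahlfors--Weill section at $0$ with $R_\nu$ and $\Phi$ then produces a holomorphic section of $\Phi$ near $\Phi(\nu)$, so $\Phi$ is a submersion everywhere and $\Phi(\Bel_0^\alpha(\D))$ is open in $B_0^\alpha(\D^*)$.

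Finally I would assemble the conclusions. Since $\Phi=\beta\circ\pi$ with $\pi$ the quotient map defining the topology of $T_0^\alpha$ and $\Phi$ continuous, $\beta$ is continuous; since $\Phi$ admits local holomorphic sections $s$, injectivity of $\beta$ gives $\pi\circ s=\beta^{-1}$ locally, so $\beta^{-1}$ is continuous and $\beta$ is a homeomorphism onto $\Phi(\Bel_0^\alpha(\D))=\beta(T_0^\alpha)$, which is open by the previous step and equals $\beta(T)\cap B_0^\alpha(\D^*)$ by Proposition \ref{th1}. This realizes $T_0^\alpha$ as a domain in $B_0^\alpha(\D^*)$. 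Each base point change on $T_0^\alpha$ is the descent of the biholomorphic $R_\nu$ through $\pi$; read in the $\beta$-chart it becomes $\Phi\circ R_\nu\circ s$, a composite of holomorphic maps, with holomorphic inverse $\Phi\circ R_\nu^{-1}\circ s$, hence a biholomorphic automorphism. The main obstacle throughout is the weighted continuity estimate for $\Phi$: both the annular decomposition of the Astala--Zinsmeister integral and the verification that the Ahlfors--Weill section and the translations $R_\nu$ respect the order $(1-|z|)^\alpha$ depend essentially on the quantitative control furnished by Theorem \ref{th0}.
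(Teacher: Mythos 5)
Most of your outline tracks the paper's actual route: the weighted continuity estimate for $\Phi$ via the Astala--Zinsmeister integral representation (the paper uses Yanagishita's version of it, Proposition \ref{yanagishita}, together with the distortion control from Theorem \ref{mori} and Proposition \ref{distortion} to compare the hyperbolic densities of $\D$, $f^\nu(\D)$ and $f_\nu(\D)$), holomorphy by the soft Gateaux-plus-continuity argument, the fact that $r_\nu$ preserves $\Bel_0^\alpha(\D)$ and is biholomorphic (Propositions \ref{group} and \ref{r-translation}), and the assembly of the final claims. The Ahlfors--Weill computation at the base point is also correct: the weight $(1-|z|^2)^2$ against $|\varphi(z^*)|=O((|z^*|-1)^{-2+\alpha})$ does produce a coefficient of order $(1-|z|)^\alpha$, and the map is linear, hence holomorphic.

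The genuine gap is in your passage from the section at $0$ to a section near an arbitrary $\Phi(\nu)$. A local section near $\Phi(\nu)$ must be a map \emph{from} a neighborhood of $\Phi(\nu)$ in $B_0^\alpha(\D^*)$ into $\Bel_0^\alpha(\D)$ with $\Phi\circ s=\id$. What your composition $\Phi\circ r_{\nu^{-1}}\circ s_0$ produces is a map \emph{from} a neighborhood of $0$ \emph{to} the Schwarzian side, sending $0$ to $\Phi(\nu)$; to turn it into a section you must invert it (equivalently, invert the base point change read in the Bers coordinates, $\beta\circ R_{\pi(\nu)}\circ\beta^{-1}$, as a map between $\Vert\cdot\Vert_{\infty,\alpha}$-neighborhoods). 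But the continuity and local invertibility of that map in the $\alpha$-norm is exactly what requires a local section near $\Phi(\nu)$ in the first place --- the right translation acts on Beltrami coefficients, not on Schwarzian derivatives, and the only way to express its action on $B_0^\alpha(\D^*)$ is through a section. So the transport argument is circular. The paper avoids this by constructing the section directly at every $\psi=\Phi(\nu)$ (Lemma \ref{localsection}): it takes the generalized Ahlfors--Weill formula built from the quasiconformal reflection $\lambda(\zeta)=f_\nu(f_\nu^{-1}(\zeta)^*)$ in the quasicircle $f_\nu(\S1)$, and then checks, using the distortion bounds $|(\zeta-\lambda(\zeta))^2\,\partial\lambda(\zeta)|\leq c\,\rho_{f_\nu(\D^*)}^{-2}(\lambda(\zeta))$ and Proposition \ref{distortion}, that the resulting Beltrami coefficient still has the decay $O((1-|z|)^\alpha)$. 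That pointwise verification at a general $\nu$ is an essential extra step that your proposal does not supply.
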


A motivation of this work is to apply the Bers embedding of the Teich\-m\"ul\-ler space $T_0^\alpha$ to
studies of the rigidity of the $\Diff_+^{1+\alpha}(\S1)$-representation of a M\"obius group and the regularity of the conjugation 
of a subgroup of $\Diff_+^{1+\alpha}(\S1)$ to a M\"obius group. These arguments are developed 
in a continuation \cite{Mat} of the present work. An overview of our project can be found in \cite{Mat1}.
A preliminary study can be found in \cite{Mat0}.


\section{The universal Teich\-m\"ul\-ler space}\label{2}
In this section, we define the universal Teich\-m\"ul\-ler space in terms of the group of 
quasisymmetric self-homeomorphisms of the circle, and then introduce a topological and a complex structure on
this space by using the quasiconformal theory: the Beltrami equation and the Schwarzian derivative.
Basic results can be found in Lehto \cite{Leh}.

We denote the group of all quasiconformal self-homeomorphisms of the unit disk $\D$ by
$\QC(\D)$. Each quasiconformal homeomorphism $f \in \QC(\D)$ extends continuously to 
the boundary $\S1$ as a homeomorphism. Then, we have a homomorphism
$q:\QC(\D) \to \Homeo(\S1)$ in the group of self-homeomorphisms of the unit circle $\S1$.
An orientation-preserving self-homeomorphism $g$ of $\S1$ is called {\it quasi\-symmetric} if
$g \in {\rm Im}\,q$. We denote the group ${\rm Im}\, q$ of all quasisymmetric self-homeomorphisms of $\S1$ by $\QS$. 
Let $\Mob(\D) \subset \QC(\D)$ denote the subgroup 
of all conformal self-homeomorphisms of $\D$, which are M\"obius transformations of $\D$.
We define  $\Mob(\S1)=q(\Mob(\D)) \subset \QS$.

\begin{definition}
The {\it universal Teich\-m\"ul\-ler space} $T$ is defined as 
the set of the cosets $\Mob(\S1) \backslash \QS$. We denote the coset of $g \in \QS$ by $[g]$.
\end{definition}

The {\it Beltrami coefficient} $\mu$ on a domain $D \subset \Chat$ is a measurable function with a 
supremum norm $\Vert \mu \Vert_\infty$ less than $1$.
We denote the set of all Beltrami coefficients on $D$ by
$$
\Bel(D)=\{\mu \in L^\infty(D) \mid \Vert \mu \Vert_\infty<1 \}.
$$
Every quasiconformal homeomorphism
$f:D \to D'$ has partial derivatives $\partial f$ and $\bar \partial f$ in the distribution sense and
the ratio $\mu_f(z)=\bar \partial f(z)/\partial f(z)$ called the {\it complex dilatation} belongs to $\Bel(D)$.
The {\it maximal dilatation} of $f$ is defined by
$$
K(f)=\frac{1+\Vert \mu_f \Vert_\infty}{1-\Vert \mu_f \Vert_\infty}.
$$
Given $K \geq 1$, we call $f$ a $K$-quasiconformal if $K(f) \leq K$.
The {\it measurable Riemann mapping theorem} asserts that
a Beltrami coefficient uniquely determines a quasiconformal homeomorphism up to post-composition of
conformal homeomorphisms (see Lehto and Virtanen \cite{LV} for the history of this theorem, and
Morrey \cite{Mor},
Ahlfors and Bers \cite{AB}, and Ahlfors \cite{Ah0} for the proof).

Applying this theorem to quasiconformal homeomorphisms of $\D$, we see that
$\Bel(\D)$ can be identified with the set of the cosets $\Mob(\D) \backslash \QC(\D)$.
Then, the boundary extension $q:\QC(\D) \to \QS$ induces a surjective map
$\pi:\Bel(\D) \to T$ by taking the quotient of $\Mob(\D) \cong \Mob(\S1)$.
This is called the {\it Teich\-m\"ul\-ler projection}.
The topology of the universal Teich\-m\"ul\-ler space $T$ is given as the quotient topology
of the unit ball $\Bel(\D)$ of the Banach space $L^\infty(\D)$ by the projection $\pi$ so that
$\pi$ is continuous. 

There is a global continuous section for the Teich\-m\"ul\-ler projection $\pi:\Bel(\D) \to T$.
This is defined by giving a canonical quasiconformal extension $e:\QS \to \QC(\D)$
for each quasisymmetric self-homeomorphism $g$ of $\S1$. The extension due to
Beurling and Ahlfors \cite{BA} can be used to obtain such a section. Douady and Earle \cite{DE} introduced
another extension $e_{\rm DE}:\QS \to \QC(\D)$
having the conformal naturality such that
$$
e_{\rm DE}(\phi_1 \circ g \circ \phi_2)=e_{\rm DE}(\phi_1) \circ e_{\rm DE}(g) \circ e_{\rm DE}(\phi_2)
$$
for any $\phi_1, \phi_2 \in \Mob(\S1)$ and any $g \in \QS$.
We note that $e_{\rm DE}(\phi_1)$ and $e_{\rm DE}(\phi_2)$ are
the M\"obius transformations of $\D$ extending $\phi_1$ and $\phi_2$, respectively.
By taking the quotient of $\Mob(\S1) \cong \Mob(\D)$, 
we have a continuous map $s_{\rm DE}:T \to \Bel(\D)$ such that
$\pi \circ s_{\rm DE}=\id_T$. We call this the {\it conformally natural section}.
The existence of a global continuous section implies that $T$ is contractible.

The measurable Riemann mapping theorem implies that, for every $\nu \in \Bel(\D)$, there is a unique normalized
quasiconformal homeomorphism $f \in \QC(\D)$ whose complex dilatation coincides with $\nu$.
Here, the {\it normalization} is given by fixing three boundary points $1$, $i$, and $-1$ on $\S1$.
We denote this normalized quasiconformal homeomorphism by $f^\nu$. 
The subgroup of $\QC(\D)$ consisting of all normalized elements is defined as $\QC_*(\D)$.
This also defines the normalized elements of $\QS$, which constitute the subgroup $\QS_*=q(\QC_*(\D))$.

Applying this normalization,
we can define a group structure on $\Bel(\D)$ and $T$ as follows.
For any $\nu_1, \nu_2 \in \Bel(\D)$, we set
$\nu_1 \ast \nu_2$ to be the complex dilatation of the composition $f^{\nu_1} \circ f^{\nu_2}$.
Then, $\Bel(\D)$ has a group structure with this operation $\ast$. In other words,
by the identification of $\Bel(\D)$ with $\QC_*(\D)$, we regard $\Bel(\D)$ as a subgroup of $\QC(\D)$.
We denote the inverse element of $\nu \in \Bel(\D)$ by $\nu^{-1}$, which is the complex dilatation of $(f^\nu)^{-1}$.
The chain rule of partial differentials yields a formula
$$
\nu_1 \ast \nu_2^{-1}(\zeta)=\frac{\nu_1(z)-\nu_2(z)}{1-\overline{\nu_2(z)}\nu_1(z)}\cdot\frac{\partial f^{\nu_2}(z)}{\overline{\partial f^{\nu_2}(z)}}
\qquad(\zeta=f^{\nu_2}(z)).
$$

For the base point $[\id]$ of $T$, the inverse image of the Teich\-m\"ul\-ler projection
$$
\pi^{-1}([\id])=\{\nu \in \Bel(\D) \mid q(f^{\nu})=\id\}
$$
is a normal subgroup of $\Bel(\D)$ as $q:\QC(\D) \to \QS$ is a homomorphism.
Having $T=\Bel(\D)/\pi^{-1}([\id])$, we see that $T$ has a group structure with the operation $\ast$ 
defined by $\pi(\nu_1) \ast \pi(\nu_2)=\pi(\nu_1 \ast \nu_2)$. Then, 
$\pi:\Bel(\D) \to T$ is a surjective homomorphism with $\pi^{-1}([\id])$ its kernel.
If we identify $T$ with $\QS_*$, we may regard $T$ as a subgroup of $\QS$ and the projection $\pi$ as the restriction of $q$ to
$\QC_*(\D)$.

Each $\nu\in \Bel(\D)$ induces the right translation $r_\nu:\Bel(\D) \to \Bel(\D)$ by
$\mu \mapsto \mu \ast \nu^{-1}$. The projection under $\pi$ yields a well-defined map
$R_{\pi(\nu)}:T \to T$ by 
$$
\pi(\mu) \mapsto \pi(\mu \ast \nu^{-1})=\pi(\mu)\ast \pi(\nu)^{-1}.
$$
In this way, for every point $\tau \in T$,
we have the base point change map $R_\tau:T \to T$ sending $\tau$ to
$[\id]$.
By the above formula, we see that $r_\nu$ and $(r_\nu)^{-1}=r_{\nu^{-1}}$ are continuous; 
hence, $r_\nu$ is a homeomorphism 
onto $\Bel(\D)$. 
From this, we see that the base point change map $R_\tau$ is also a homeomorphism onto $T$.

The universal Teich\-m\"ul\-ler space $T$ has a complex structure modeled on
a certain complex Banach space. This is seen as follows.
For $\mu \in \Bel(\D)$, we
extend $\mu(z)$ to $\Chat$
by setting $\mu(z) \equiv 0$ for $z \in \mathbb D^*=\Chat-\overline{\D}$.
By the measurable Riemann mapping theorem, there exists a unique quasiconformal self-homeomorphism 
$f_\mu$ of $\Chat$ up to post-composition of M\"obius transformations whose complex dilatation coincides with
the extended Beltrami coefficient $\mu$. 
We take the Schwarzian derivative $S_f(z)$
of the conformal homeomorphism $f(z)=f_\mu|_{\D^*}(z)$ on $\D^*$.
The ambiguity of $f_\mu$ by M\"obius transformations is offset by taking the
Schwarzian derivative because 
$S_{h \circ f}(z)= S_f(z)$ for every $h \in \Mob(\Chat)$.

We define the Banach space of holomorphic quadratic differentials $\varphi=\varphi(z)dz^2$
on $\D^*$ with a finite hyperbolic supremum norm by
$$
B(\D^*)=\{\varphi \in \Hol(\D^*) \mid \Vert \varphi \Vert_\infty =\sup_{z \in \D^*} \rho^{-2}_{\D^*}(z)|\varphi(z)|<\infty\},
$$
where $\rho_{\D^*}(z)=2/(|z|^2-1)$ is the hyperbolic density on $\D^*$.
We note that an element $\varphi$ of $\Hol(\D^*)$ satisfies $\varphi(z)=O(1/z^4)$ $(z \to \infty)$.
The Nehari--Kraus theorem asserts that $\Vert \varphi \Vert_\infty  \leq 3/2$ for
the Schwarzian derivative $\varphi(z)=S_f(z)$ of any conformal homeomorphism $f$ of $\D^*$.
Hence, we have a map
$\Phi:\Bel(\D) \to B(\D^*)$ by the correspondence of $\mu \in \Bel(\D)$ to $S_{f_\mu|_{\D^*}}$,
which is called the {\it Bers projection} (onto the image $\Phi(\Bel(\D))$). 

With regard to the Teich\-m\"ul\-ler projection $\pi:\Bel(\D) \to T$ and
the Bers projection $\Phi:\Bel(\D) \to B(\D^*)$, it can be proved that
$\pi(\mu_1)=\pi(\mu_2)$ if and only if $\Phi(\mu_1)=\Phi(\mu_2)$.
Therefore, we have a well-defined injection $\beta:T \to B(\D^*)$ that satisfies $\beta \circ \pi=\Phi$.
This is called the {\it Bers embedding} of the universal Teich\-m\"ul\-ler space $T$.

\begin{proposition}\label{B-conti}
The Bers projection $\Phi:\Bel(\D) \to B(\D^*)$ is continuous. 
\end{proposition}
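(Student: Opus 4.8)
The plan is to prove continuity of the Bers projection $\Phi:\Bel(\D)\to B(\D^*)$ by showing that a small perturbation of the Beltrami coefficient in the $L^\infty$-norm produces a small change in the Schwarzian derivative measured in the hyperbolic supremum norm $\Vert\cdot\Vert_\infty$ on $B(\D^*)$. Since $\Bel(\D)$ is a group under $\ast$ and the base point change maps are homeomorphisms, it would in principle suffice to check continuity at a single point; however, because the Schwarzian transforms in a more complicated way than the Beltrami coefficient under composition, the cleanest route is to prove continuity directly at an arbitrary $\mu_0\in\Bel(\D)$.

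First I would reduce the problem to an integral-kernel estimate. The standard approach (as in Lehto \cite{Leh}) is to use the explicit dependence of the normalized quasiconformal map $f_\mu$ of $\Chat$ on the parameter $\mu$, given by the Ahlfors--Bers representation. Writing $f_\mu|_{\D^*}$ as a conformal homeomorphism depending on $\mu$, one expresses $S_{f_\mu}$ through an integral formula in which the integrand depends on $\mu$. The key analytic input is that the solution of the Beltrami equation depends holomorphically (hence continuously) on $\mu$ in the $L^\infty$-topology, uniformly on compact subsets of $\D^*$ bounded away from $\S1$. Concretely, for $z\in\D^*$ with $|z|\geq r>1$, one obtains a bound of the form
$$
\rho^{-2}_{\D^*}(z)\,|S_{f_{\mu_1}}(z)-S_{f_{\mu_2}}(z)|\leq C(r)\,\Vert\mu_1-\mu_2\Vert_\infty,
$$
where $C(r)$ depends on $r$ and on the maximal dilatations of $f_{\mu_1},f_{\mu_2}$.

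The main obstacle, as usual for the hyperbolic sup-norm, lies in controlling the Schwarzian \emph{uniformly up to the boundary} $\S1$, where $\rho^{-2}_{\D^*}(z)$ degenerates like $(|z|-1)^2$. The interior estimate above does not immediately give uniform control as $|z|\to 1$. To handle this, I would exploit the Nehari--Kraus bound $\Vert\varphi\Vert_\infty\leq 3/2$, which already confines all Schwarzians to a fixed ball of $B(\D^*)$, together with a normal-families or equicontinuity argument: the family $\{f_\mu\}$ with $K(f_\mu)$ bounded is a normal family of univalent functions on $\D^*$, so their Schwarzians converge locally uniformly, and the uniform norm bound prevents the difference from concentrating near the boundary. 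A standard way to make this rigorous is to split $\D^*$ into a compact core $\{|z|\geq r\}$, where the interior estimate applies, and a boundary collar $\{1<|z|<r\}$, where one uses that the Schwarzians of the two maps each have norm at most $3/2$ and a distortion estimate to show the contribution is $O(r-1)$ uniformly. Choosing $r$ close to $1$ and then $\Vert\mu_1-\mu_2\Vert_\infty$ small completes the argument.

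Thus the proof would proceed in three steps: (i) record the integral representation of $S_{f_\mu}$ and its continuous (indeed holomorphic) dependence on $\mu$; (ii) derive the interior estimate on $\{|z|\geq r\}$ with the explicit constant $C(r)$; and (iii) combine this with the uniform Nehari--Kraus bound and a boundary-collar distortion argument to upgrade local uniform convergence to convergence in $\Vert\cdot\Vert_\infty$. I expect step (iii)---the passage from interior control to control in the hyperbolic sup-norm near $\S1$---to be the genuine difficulty, since it is precisely the boundary behavior that distinguishes the Banach-space topology of $B(\D^*)$ from mere locally uniform convergence.
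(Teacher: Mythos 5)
Your step (iii) is where the argument breaks down, and it cannot be repaired in the form you propose. On the boundary collar $\{1<|z|<r\}$ the Nehari--Kraus bound gives only $\rho_{\D^*}^{-2}(z)\,|S_{f_{\mu_1}}(z)-S_{f_{\mu_2}}(z)|\leq 3$; it does not give a contribution of order $O(r-1)$. The hyperbolic sup-norm is scale-invariant near $\S1$, and for a general $\mu\in\Bel(\D)$ the quantity $\rho_{\D^*}^{-2}(z)|S_{f_\mu}(z)|$ need not tend to $0$ as $|z|\to 1$: by Theorem \ref{B_0} it vanishes at the boundary precisely when the class of $\mu$ has a representative in $\Bel_0(\D)$. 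If the collar contribution were $O(r-1)$ for all pairs, every difference of points of $\beta(T)$ would lie in $B_0(\D^*)$, which is false. More generally, locally uniform convergence of the Schwarzians on $\D^*$ --- which your steps (i) and (ii) do deliver --- is strictly weaker than convergence in $\Vert\cdot\Vert_\infty$, so no normal-families or equicontinuity argument can close the gap; the boundary behavior is exactly what the Banach-space topology of $B(\D^*)$ detects and what compactness arguments lose.

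The fix is the route you set aside at the outset. The Schwarzian transforms in exactly the right way under composition: writing $f_\mu=(f_\mu\circ f_\nu^{-1})\circ f_\nu$, the cocycle relation gives $S_{f_\mu}(z)-S_{f_\nu}(z)=S_{f_\mu\circ f_\nu^{-1}}(f_\nu(z))\,(f_\nu'(z))^2$, and by conformal invariance of the hyperbolic metric the norm $\Vert \Phi(\mu)-\Phi(\nu)\Vert_\infty$ equals the hyperbolic sup-norm of $S_{f_\mu\circ f_\nu^{-1}}$ over the quasidisk $f_\nu(\D^*)$. Since $f_\mu\circ f_\nu^{-1}$ is conformal there and extends to a quasiconformal map of $\Chat$ with dilatation $r_\nu(\mu)=\mu\ast\nu^{-1}$, the majorant principle (\cite[Theorem II.3.2]{Leh}) bounds this by $3\Vert r_\nu(\mu)\Vert_\infty\leq 3\Vert\mu-\nu\Vert_\infty/(1-\Vert\nu\Vert_\infty\Vert\mu\Vert_\infty)$. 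This is a genuinely global, boundary-to-boundary estimate and gives continuity at once; it is the paper's proof and requires no interior/collar splitting.
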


\begin{proof}
For two arbitrary points $\mu, \nu \in \Bel(\D)$, we apply the right translation $r_\nu$ to $\mu$.
On the quasidisk $f_{\nu}(\D^*)$, we use an estimate of the Schwarzian derivative of the
conformal homeomorphism $f_\mu \circ f_{\nu}^{-1}$
in terms of $\Vert r_\nu(\mu) \Vert_\infty$ (see Theorem II.3.2 in \cite{Leh}).
Then, 
$$
\Vert \Phi(\mu)-\Phi(\nu) \Vert_\infty \leq 3\Vert r_\nu(\mu)\Vert_\infty \leq 
\frac{3\Vert \mu-\nu\Vert_\infty}{1-\Vert \nu \Vert_\infty \Vert \mu \Vert_\infty},
$$
which implies that $\Phi$ is continuous.
\end{proof}

In fact, the Bers projection $\Phi$ is holomorphic. Once we have $\Phi$ as continuous, then
the holomorphy is a consequence of the point-wise holomorphic dependance of 
the normalized solution $f_\mu(z)$ of the Beltrami equation for $\mu$,
which is a significant contribution to the measurable Riemann mapping theorem
by Ahlfors and Bers \cite{AB}. 
Moreover, the following result was proved by Bers \cite{Ber}.

\begin{theorem}\label{hol-sub}
The Bers projection $\Phi:\Bel(\D) \to B(\D^*)$ is a holomorphic split submersion.
\end{theorem}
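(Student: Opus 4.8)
The plan is to deduce the theorem from three ingredients: the continuity of $\Phi$ already established in Proposition \ref{B-conti}, the holomorphic dependence of the normalized solution of the Beltrami equation on its coefficient, and an explicit local holomorphic right inverse of $\Phi$. First I would record that $\Phi$ is holomorphic. For each fixed $z\in\D^*$ the value $S_{f_\mu}(z)$ depends holomorphically on $\mu\in\Bel(\D)$ by the parametric measurable Riemann mapping theorem of Ahlfors and Bers \cite{AB}; hence $\Phi$ is Gâteaux holomorphic, and since it is continuous it is locally bounded, so that a locally bounded Gâteaux-holomorphic map between complex Banach spaces is holomorphic. Thus only the submersion property remains, and for this it suffices to produce, through every point $\varphi_0$ of the image, a holomorphic map $\sigma$ from a neighborhood of $\varphi_0$ in $B(\D^*)$ into $\Bel(\D)$ with $\Phi\circ\sigma=\id$ and $\sigma(\varphi_0)=\mu_0$ for some chosen $\mu_0\in\Phi^{-1}(\varphi_0)$.

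At the base point I would use the Ahlfors--Weill reflection. For $\varphi\in B(\D^*)$ with $\Vert\varphi\Vert_\infty<1/2$, set
$$\nu_\varphi(z)=-\tfrac12(1-|z|^2)^2\,\varphi(1/\bar z)\,\bar z^{-4}\qquad(z\in\D),$$
which, up to the standard normalization, is the complex dilatation on $\D$ of the quasiconformal extension to $\Chat$ of the locally univalent function on $\D^*$ with Schwarzian $\varphi$. This assignment is complex-linear in $\varphi$, hence holomorphic, satisfies $\Vert\nu_\varphi\Vert_\infty\le 2\Vert\varphi\Vert_\infty<1$, and by the Ahlfors--Weill theorem (see \cite{Leh}) obeys $\Phi(\nu_\varphi)=\varphi$. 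Since $\nu_0=0$, the map $s\colon\varphi\mapsto\nu_\varphi$ is a local holomorphic section of $\Phi$ at the origin; in particular $d\Phi_0$ is a surjection with bounded right inverse $ds_0$, so $\Phi$ is a submersion at $0$ and the ball $\{\Vert\varphi\Vert_\infty<1/2\}$ lies in the image.

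To reach an arbitrary base point $\mu_0$ I would transport this section by the group structure. The right translation $r_{\mu_0}\colon\mu\mapsto\mu\ast\mu_0^{-1}$ is a biholomorphic automorphism of $\Bel(\D)$ carrying $\mu_0$ to $0$, and since it permutes the fibers of $\pi$, while $\pi(\mu_1)=\pi(\mu_2)$ if and only if $\Phi(\mu_1)=\Phi(\mu_2)$, it also permutes the fibers of $\Phi$. Consequently $\Phi\circ r_{\mu_0}^{-1}$ is constant on the fibers of $\Phi$ and factors as $\Lambda_{\mu_0}\circ\Phi$ for a well-defined injection $\Lambda_{\mu_0}$ of the image with $\Lambda_{\mu_0}(0)=\varphi_0$. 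Composing with the section $s$ at the origin gives $\Lambda_{\mu_0}=\Phi\circ r_{\mu_0}^{-1}\circ s$ near $0$, which is holomorphic; then $\sigma:=r_{\mu_0}^{-1}\circ s\circ\Lambda_{\mu_0}^{-1}$ is the desired section at $\varphi_0$, provided $\Lambda_{\mu_0}$ is a local biholomorphism at $0$.

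The main obstacle is exactly this last point: in infinite dimensions an injective holomorphic map need not be open, so one must show that $d(\Lambda_{\mu_0})_0$ is a Banach-space isomorphism, equivalently that the image $\Phi(\Bel(\D))$ is open in $B(\D^*)$ near $\varphi_0$. This is where the quasiconformal geometry enters, and it is the analytic heart of the argument of Bers \cite{Ber}. Concretely, one realizes the section at $\mu_0$ directly: writing $F=f_{\mu_0}$, conformal on $\D^*$, the set $\Omega^*=F(\D^*)$ is a quasidisk, and the Schwarzian chain rule, applied on $\D^*$, gives $\Phi(\mu)=(S_{f_\mu\circ F^{-1}}\circ F)\,(F')^2+\varphi_0$, identifying $\Phi$ near $\mu_0$, through the isometric pullback $B(\Omega^*)\to B(\D^*)$, with the Bers projection over $\Omega^*$ near $0$. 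A quasiconformal reflection in the quasicircle $\partial\Omega^*=F(\S1)$, whose dilatation is controlled by $K(f_{\mu_0})$, then furnishes a holomorphic section of this projection by reflecting quadratic differentials of small norm, just as in the disk case. Carrying out this reflection with uniform norm control, and verifying that it produces a genuine exact holomorphic section, is the step that requires care; with it in hand $d\Phi_{\mu_0}$ is a split surjection at every $\mu_0$, and $\Phi$ is a holomorphic submersion. I would cite \cite{Ber} and \cite{Leh} for the detailed reflection estimates.
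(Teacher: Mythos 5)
Your outline is correct and follows essentially the route the paper takes: the paper does not prove Theorem \ref{hol-sub} itself but cites Bers \cite{Ber}, notes that holomorphy follows from continuity plus the pointwise holomorphic dependence of $f_\mu$ on $\mu$, and reduces the submersion property to the existence of local holomorphic sections, which is exactly the Ahlfors--Weill plus quasiconformal-reflection construction you describe (and which the paper itself carries out in the restricted setting in Lemma \ref{localsection}). The only stylistic remark is that your detour through $\Lambda_{\mu_0}$ is redundant once you build the reflection section directly at an arbitrary $\mu_0$, as you ultimately do.
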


The condition needed for $\Phi$ to be a holomorphic split submersion is equivalent to the existence of
a local holomorphic section for $\Phi$ at every $\varphi \in \Phi(\Bel(\D))$ sending $\varphi$ to an arbitrary point of
$\Phi^{-1}(\varphi)$ (see Section 1.6 of Nag \cite{Nag} concerning holomorphic split submersion between domains of Banach spaces).
This implies that $\Phi$ is an open map, and in particular, the image $\Phi(\Bel(\D))$ in $B(\D^*)$ is open
(hence, it is a bounded domain).

As $\pi$ is a topological quotient map and $\Phi$ is continuous and open, the Bers embedding 
$\beta=\Phi \circ \pi^{-1}:T \to B(\D^*)$ is a homeomorphism onto the image 
$\beta(T)=\Phi(\Bel(\D))$. By identifying $T$ with a bounded domain $\beta(T) \subset B(\D^*)$,
we provide a complex structure for $T$. Then, the base point change map $R_\tau$ for every $\tau \in T$ is
a biholomorphic automorphism of $T$. Indeed, 
for an arbitrary point $\varphi \in \beta(T)$,
we take a local holomorphic section $\eta$ of $\Phi$ 
and $\nu \in \Bel(\D)$ with $\pi(\nu)=\tau$.
We represent $R_\tau$ at $\beta^{-1}(\varphi)$ by
$$
R_\tau=\beta^{-1} \circ \Phi \circ r_\nu \circ \eta \circ \beta.
$$
As $\Phi \circ r_\nu \circ \eta$ is holomorphic, 
$R_\tau$ is holomorphic. As the inverse $R_{\tau}^{-1}=R_{\tau^{-1}}$
is also holomorphic, $R_\tau$ is biholomorphic.

\section{Symmetric homeomorphisms and the little Teich\-m\"ul\-ler subspace}\label{3}

A quasisymmetric homeomorphism was originally introduced as a function on $\R$ that has quasiconformal extension to
the upper half-plane $\H2$. 
It can be characterized by the quasisymmetry quotient defined as follows:

\begin{definition}
An increasing homeomorphism $h: \R \to \R$ is called a 
{\it quasisymmetric function} if there exists a constant $M \geq 1$
such that 
$$
\frac{1}{M} \leq \frac{h(x+t)-h(x)}{h(x)-h(x-t)} \leq M
$$
holds for every $x \in \R$ and for every $t>0$. The ratio in the mid-term is called 
the {\it quasisymmetry quotient} of $h$ and is denoted by $m_h(x,t)$.
\end{definition}

For an orientation-preserving self-homeomorphism $g:\S1 \to \S1$,
we can take its lift $\widetilde g: \R \to \R$ with $u \circ \widetilde g=g \circ u$ 
for the universal cover $u: \R \to \S1$ given by $u(x)=e^{2\pi ix}$.
This is uniquely determined up to an additive integer and
is an increasing homeomorphism of $\R$ satisfying $\widetilde g(x+1)=\widetilde g(x)+1$.
Conversely, for an increasing homeomorphism $h: \R \to \R$ with $h(x+1)=h(x)+1$,
we can take its projection $\underline h:\S1 \to \S1$ with $u \circ h=\underline h \circ u$.

It is known that $g$ is a quasisymmetric self-homeomorphism of $\S1$ if and only if its lift
$\widetilde g$ is a quasisymmetric function on $\R$ (see Theorem 4.4 in \cite{Mat0}). 
To determine whether $\widetilde g$ is quasisymmetric, it is enough to check the quasisymmetry quotient
$m_{\widetilde g}(x,t)$ for $0 \leq x < 1$ and $0<t \leq 1/2$ (see Proposition 4.5 in \cite{Mat0}). For each $g \in \QS$,
we introduce the {\it quasisymmetry constant} of $g$ as
$$
M(g)=\sup_{0 \leq x < 1,\,0<t \leq 1/2} m_{\widetilde g}(x,t)^{\pm 1}.
$$
This defines a topology on $\QS$. More precisely, $g_n \in \QS$ converge to $g \in \QS$ if
$M(g_n \circ g^{-1}) \to 1$ as $n \to \infty$.
Then, the relative topology on $\QS_* \subset \QS$ 
coincides with the Teich\-m\"ul\-ler topology on $T \cong \QS_*$, which is the quotient topology under $\pi:\Bel(\D) \to T$
(see Theorem III.3.1 in Lehto \cite{Leh}).

We consider a special class of quasisymmetric functions on $\R$ 
whose quasisymmetry quotient tends to 1 uniformly as $t \to 0$.
We also consider the corresponding quasisymmetric homeomorphisms of $\S1$.

\begin{definition}
A quasisymmetric function $h:\R \to \R$ is called {\it symmetric} if there exists a non-negative increasing function
$\varepsilon(t)$ for $t>0$ with $\lim_{t \to 0}\varepsilon(t)=0$
such that 
$$
(1+\varepsilon(t))^{-1} \leq m_h(x,t) \leq 1+\varepsilon(t)
$$ 
for all $x \in \R$. We call $\varepsilon(t)$ a gauge function for symmetry.
A quasisymmetric homeomorphism $g \in \QS$ is called {\it symmetric} if its lift
$\widetilde g:\R \to \R$ is a symmetric function. We denote the subset of all symmetric self-homeomorphisms of $\S1$
by $\Sym \subset \QS$. 
\end{definition}

As the corresponding concept for quasiconformal maps, 
there are asymptotically conformal homeomorphisms whose complex dilatations vanish
at the boundary. We will review the relation of these two maps. In particular, we consider a certain quantitative estimate of 
the complex dilatation of the quasiconformal extension in terms of the quasisymmetry quotient. This
was originally studied by Carleson \cite{Car}. 

For a quasisymmetric function $h:\R \to \R$, we set
$$
\alpha(x,y)=\int_0^1 h(x+ty)dt; \qquad \beta(x,y)=\int_0^1 h(x-ty)dt,
$$
and define 
$$
F(z)=\frac{1}{2}[\alpha(x,y)+\beta(x,y)]+i[\alpha(x,y)-\beta(x,y)]
$$
for $z=x+iy \in \H2$. 
Beurling and Ahlfors \cite{BA} proved that $F$ is a quasiconformal self-homeomorphism of $\H2$
with an estimate of the maximal dilatation of $F$ in terms of the quasisymmetry constant $M \geq 1$ of $h$.
We call this the {\it Beurling--Ahlfors extension} of $h$. 
With regard to the Beurling--Ahlfors extension of symmetric functions, the following result, 
which was proved in Lemma 3 of \cite{Car} and 
improved slightly by providing an explicit computation for involved constants in Theorem 5.1 of \cite{Mat0}, is crucial.

\begin{theorem}\label{Carleson1}
Let $h:\R \to \R$ be a symmetric function such that $m_h(x,t)^{\pm 1} \leq 1+\varepsilon(t)$ 
for a gauge function $\varepsilon(t)$. Let $F$ be the Beurling--Ahlfors extension of $h$,
which is a quasiconformal self-homeomorphism of $\H2$. Then, the complex dilatation $\mu_F$ of $F$
satisfies
$|\mu_F(z)| \leq 4\varepsilon(y)$
for every $z=x+iy \in \H2$.
\end{theorem}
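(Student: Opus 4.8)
The plan is to compute the complex dilatation $\mu_F=\bar\partial F/\partial F$ explicitly and bound it pointwise on $\H2$. Writing $F=U+iV$ with $U=\tfrac12(\alpha+\beta)$ and $V=\alpha-\beta$, I would first pass to the average form $\alpha=\tfrac1y\int_x^{x+y}h\,ds$ and $\beta=\tfrac1y\int_{x-y}^{x}h\,ds$ by the substitution $s=x\pm ty$. A direct computation of the four partials $U_x,U_y,V_x,V_y$ in terms of $p=h(x+y)$, $q=h(x)$, $r=h(x-y)$ and of $\alpha,\beta$ then gives, after setting $a=p-q$, $b=q-r$ and introducing the deviations $A=\alpha-\tfrac{p+q}2$, $B=\beta-\tfrac{q+r}2$ of each average from the chord midpoint,
\begin{align*}
2y\,\partial F&=\bigl[(a+b)-(A-B)\bigr]+i\bigl[\tfrac34(a-b)+\tfrac12(A+B)\bigr],\\
2y\,\bar\partial F&=(A-B)+i\bigl[\tfrac54(a-b)-\tfrac12(A+B)\bigr].
\end{align*}
Since $a,b>0$, the real part of the denominator is comparable to $a+b$, so the whole task reduces to bounding the three quantities $a-b$, $A+B$ and $A-B$ by a multiple of $\varepsilon(y)(a+b)$.

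The terms $a-b$ and $A+B$ are the asymmetry contributions, and they are controlled directly by the hypothesis centered at the single point $x$. Indeed $a/b=m_h(x,y)\in[(1+\varepsilon(y))^{-1},1+\varepsilon(y)]$ gives $|a-b|\le\tfrac12\varepsilon(y)(a+b)$; and writing
\[
A+B=\int_0^1\bigl\{[h(x+ty)-h(x)]-[h(x)-h(x-ty)]\bigr\}\,dt-\tfrac12(a-b),
\]
each integrand is bounded by $\varepsilon(ty)\le\varepsilon(y)$ times the corresponding symmetric sum via $m_h(x,ty)$ and the monotonicity of $\varepsilon$, whence $|A+B|\le\tfrac12\varepsilon(y)(a+b)$ up to lower-order terms. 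All of this uses only the three-point condition with center $x$.

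The remaining term is the real part $A-B=\int_0^1[G(t)-tG(1)]\,dt$ with $G(t)=h(x+ty)-h(x-ty)$, and this is the main obstacle. It measures the nonlinearity of the distortion rather than its asymmetry, and it is genuinely not controlled by the condition centered at $x$: when $h$ is symmetric about $x$, so that $m_h(x,\cdot)\equiv1$, this term is still generically nonzero. What controls it is the symmetry condition at shifted centers and all smaller scales. I would subdivide $[x-y,x+y]$ dyadically and apply $m_h(x',s)^{\pm1}\le1+\varepsilon(s)\le1+\varepsilon(y)$ to each adjacent pair of equal-length subintervals; this forces consecutive increments of $h$ to differ by a factor within $1+\varepsilon(y)$, and telescoping yields $|G(t)-tG(1)|\le C\varepsilon(y)\,G(1)$, hence $|A-B|\le C\varepsilon(y)(a+b)$ after integration. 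This is the content of Carleson's Lemma~3 \cite{Car}; the delicate point is to organize the telescoping so that the accumulated error stays linear in $\varepsilon(y)$ rather than picking up a logarithmic factor, which is precisely where the explicit constant is secured.

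Combining these estimates, the numerator satisfies $|2y\,\bar\partial F|\le|A-B|+\tfrac54|a-b|+\tfrac12|A+B|=O(\varepsilon(y))(a+b)$ while the denominator satisfies $|2y\,\partial F|\ge(a+b)-|A-B|=(a+b)(1-O(\varepsilon(y)))$. Taking the ratio and tracking the constants carefully---the quantitative refinement of \cite{Mat0}---then yields $|\mu_F(z)|\le4\varepsilon(y)$ for every $z=x+iy\in\H2$, as asserted.
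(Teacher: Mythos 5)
A preliminary remark: the paper does not actually prove Theorem \ref{Carleson1}; it attributes it to \cite[Lemma 3]{Car} with the explicit constant supplied by \cite{Mat0}. So your proposal can only be measured against that cited argument, and as a reconstruction of its skeleton it is accurate. I checked your formulas against the definition $F=\tfrac12(\alpha+\beta)+i(\alpha-\beta)$: with $a=p-q$, $b=q-r$, $A=\alpha-\tfrac{p+q}{2}$, $B=\beta-\tfrac{q+r}{2}$ one indeed gets $2y\,\partial F=[(a+b)-(A-B)]+i[\tfrac34(a-b)+\tfrac12(A+B)]$ and $2y\,\bar\partial F=(A-B)+i[\tfrac54(a-b)-\tfrac12(A+B)]$. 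The bound $|a-b|\le\tfrac12\varepsilon(y)(a+b)$ from $a/b=m_h(x,y)$ is correct, the treatment of $A+B$ via $m_h(x,ty)$ and the monotonicity of the gauge function is sound, and you correctly isolate $A-B=\int_0^1[G(t)-tG(1)]\,dt$ as the one term that the three-point condition centered at $x$ cannot control.

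As a proof, however, the proposal has gaps located exactly where the content of the theorem lies. First, the key estimate $|G(t)-tG(1)|\le C\varepsilon(y)\,G(1)$ is not established: you describe a dyadic subdivision and telescoping, you yourself flag that the delicate point is keeping the accumulated error linear in $\varepsilon(y)$ rather than picking up a logarithmic factor, and then you resolve this by citing Carleson's Lemma~3 --- which is the very result being proved. The bookkeeping (how the relative errors $\varepsilon(2^{-k}y)$ at successive scales combine, and what constant $C$ emerges) is never carried out. Second, the conclusion carries the specific constant $4$, and this does not follow from what you have written: your numerator bound is $(C+1+o(1))\varepsilon(y)(a+b)$ and your denominator bound is $(1-C\varepsilon(y))(a+b)$, so the inequality $|\mu_F|\le4\varepsilon(y)$ holds only if $C$ is small enough (roughly $C\le3$), i.e.\ the final statement genuinely depends on the unproved value of $C$. ``Tracking the constants carefully'' is precisely the contribution of \cite{Mat0} that you invoke rather than supply. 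In short: correct and verifiable as a roadmap to the cited proof, but not a self-contained proof of the stated inequality.
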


In particular, this theorem shows that a symmetric function $h:\R \to \R$ extends continuously to a
quasiconformal homeomorphism $F:\H2 \to \H2$ with $F(\infty)=\infty$ whose complex dilatation
$\mu_F(z)$ uniformly tends to $0$ as $y \to 0$ on $x \in \R$.

Conversely, such a quasiconformal self-homeomorphism $F$ of $\H2$ 
extends to a symmetric function on $\R$.  
Lemma 2 of Carleson \cite{Car} proved this fact, giving the order
of a gauge function for symmetry.
We will reprove this result in the following form with a more explicit estimate for the gauge function.
This estimate is useful in later arguments.

\begin{theorem}\label{ac-sym}
If a $K$-quasiconformal homeomorphism $F:\H2 \to \H2$ with $F(\infty)=\infty$ satisfies
$|\mu_F(z)| \leq \widetilde \varepsilon (y)$ uniformly on $x \in \R$ for a function $\widetilde \varepsilon(y)$ with 
$\widetilde \varepsilon(y) \to 0$ 
as $y \to 0$, then its boundary extension
$h:\R \to \R$ is a symmetric function whose quasisymmetry quotient satisfies
$m_h(x,t)^{\pm 1} \leq 1+\varepsilon(t)$ for
a gauge function 
$\varepsilon(t)$ with 
$$
\varepsilon(t) \leq c\, \widetilde \varepsilon(\sqrt{t})+R \sqrt{t} \quad (0< t \leq 1/2),
$$
where $c=c(K)>0$ is a constant depending only on $K \geq 1$, and $R>0$ is an absolute constant.
\end{theorem}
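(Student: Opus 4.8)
The plan is to fix $x \in \R$ and $0 < t < 1/2$ and reduce the estimate of the quasisymmetric quotient $m_h(x,t)$ to a distortion estimate for a normalized conjugate of $F$. First I would precompose $F$ with the affine automorphism $w \mapsto tw + x$ of $\mathbb{H}$ and postcompose with an affine automorphism of $\mathbb{H}$ so that the resulting $K$-quasiconformal map $\tilde F$ still fixes $\infty$ and its boundary values $\tilde h$ satisfy $\tilde h(0)=0$ and $\tilde h(1)=1$. Since these affine maps are conformal with real positive derivative, the complex dilatation transforms as $|\mu_{\tilde F}(\xi + i\eta)| = |\mu_F(x + t\xi + i t\eta)| \leq \widetilde\varepsilon(t\eta)$, and a direct computation gives $\tilde h(-1) = -1/m_h(x,t)$. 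Hence it suffices to prove $|\tilde h(-1) + 1| \leq c(K)\,\widetilde\varepsilon(\sqrt t) + R\sqrt t$, since this yields $m_h(x,t)^{\pm 1} \leq 1 + \varepsilon(t)$ of the required form; here I may assume $\widetilde\varepsilon$ is non-decreasing by passing to its increasing majorant. That $\tilde h$ is an increasing self-homeomorphism of $\R$ is standard for a quasiconformal automorphism of $\mathbb{H}$ fixing $\infty$.

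Next I would split the dilatation at the height $T = 1/\sqrt t$, which exceeds $1$ because $t<1/2$. Let $G$ be the quasiconformal automorphism of $\mathbb{H}$ fixing $0,1,\infty$ whose complex dilatation equals $\mu_{\tilde F}$ on $\{\eta > T\}$ and $0$ on $\{\eta \leq T\}$, and set $P = \tilde F \circ G^{-1}$, which also fixes $0,1,\infty$. The chain rule for complex dilatations shows that $\mu_P$ is supported on $G(\{\eta \leq T\})$ and that there $|\mu_P(G(w))| = |\mu_{\tilde F}(w)| \leq \widetilde\varepsilon(t\eta) \leq \widetilde\varepsilon(\sqrt t)$ for $\eta \leq T$; consequently $\Vert \mu_P \Vert_\infty \leq \widetilde\varepsilon(\sqrt t)$, so $P$ is globally close to conformal. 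The map $G$, on the other hand, is conformal on the strip $\{0 < \eta < T\}$ and sends $\R$ into $\R$, so by Schwarz reflection it extends to a holomorphic injection on $\{|\eta| < T\}$; in particular $G$ is univalent on the disc $D(0,T)$, which contains $-1,0,1$.

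The far contribution is then handled by the Koebe distortion theorem for the univalent function $G$ on $D(0,T)$, rescaled to the unit disc and normalized by $G(0)=0$, $G(1)=1$: the distortion bounds pin $G(-1)$ near $-1$ with $|G(-1)+1| \leq R\sqrt t$ for an absolute constant $R$, the point being that the $K$-quasiconformality of $G$ above height $T$ never enters, only its conformality on $D(0,T)$, which is why $R$ does not depend on $K$. For the near contribution I would invoke the Lipschitz dependence of the normalized solution of the Beltrami equation on its coefficient: since $P$ fixes $0,1,\infty$ and $\Vert \mu_P \Vert_\infty \leq \widetilde\varepsilon(\sqrt t) \leq (K-1)/(K+1)$, one has $|P(\zeta) - \zeta| \leq c(K)\,\widetilde\varepsilon(\sqrt t)$ for $\zeta$ in a fixed compact subset of $\R$, the constant $c(K)$ accounting for the admissible range of $\Vert \mu_P \Vert_\infty$. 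Combining these through $\tilde h(-1) = P(G(-1))$ gives
$$
|\tilde h(-1) + 1| \leq |P(G(-1)) - G(-1)| + |G(-1) + 1| \leq c(K)\,\widetilde\varepsilon(\sqrt t) + R\sqrt t,
$$
which finishes the proof after translating back to $m_h(x,t)$.

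I expect the delicate point to be the far-region estimate together with the bookkeeping of constants: one must check that the reflected $G$ is genuinely univalent on $D(0,T)$ (it is, since $G$ preserves the upper and lower half-planes and is a boundary homeomorphism), and that the Koebe bound on $|G(-1)+1|$ is an absolute multiple of $1/T = \sqrt t$ uniformly for all $t \in (0,1/2)$, so that the entire $K$-dependence stays quarantined in the near-boundary factor $c(K)$. A secondary technical matter is ensuring that $\widetilde\varepsilon$ may be taken monotone and that $G(-1)$ remains in a fixed compact set, so that the deviation estimate for $P$ is applicable precisely at the point $G(-1)$.
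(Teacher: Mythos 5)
Your argument is essentially sound, and its skeleton coincides with the paper's: both proofs split the complex dilatation at height $\sqrt t$ (your height $T=1/\sqrt t$ is exactly the paper's $\sqrt t$ after your affine rescaling), both use that the truncated map is conformal on a strip and hence, after Schwarz reflection, univalent on a disk in which the three relevant boundary points sit at relative distance $\sqrt t$ from the center, and both apply the Koebe distortion theorem there to produce the absolute $R\sqrt t$ term. Where you genuinely diverge is in handling the residual map carrying the near-boundary dilatation. The paper transfers the quasisymmetric quotient of $F_t|_{\R}$ to that of $h$ through the small-dilatation factor $h_t$ by means of moduli of quadrilaterals and the distortion function $\lambda$, using only $K(h_t)\le 1+c'\widetilde\varepsilon(\sqrt t)$ together with the differentiability of $\lambda$ at $1$; you instead invoke the quantitative Lipschitz dependence, uniform on compacta, of the normalized solution $f^\mu$ on $\mu$ from the Ahlfors--Bers theory to get $|P(\zeta)-\zeta|\le c(K)\Vert\mu_P\Vert_\infty$. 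Both tools are standard and both produce a gauge of the claimed form; your route avoids the function $\lambda$ entirely at the cost of quoting the measurable Riemann mapping machinery, and your explicit replacement of $\widetilde\varepsilon$ by its increasing majorant addresses a point the paper leaves implicit.

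One detail needs repair. From $\tilde h(-1)=-1/m_h(x,t)$ the estimate $|\tilde h(-1)+1|\le\delta$ gives $m_h(x,t)^{-1}\le 1+\delta$ directly, but for $m_h(x,t)$ itself it only gives $m_h\le 1/(1-\delta)$; when $\delta$ is not small (for instance $t$ near $1/2$, or $\widetilde\varepsilon$ not small), converting $1/(1-\delta)$ into $1+C\delta$ forces $C$ to depend on $K$ through the a priori quasisymmetry bound $m_h\le M(K)$, which would contaminate the constant $R$ that the statement requires to be absolute. The fix is immediate: run the identical argument a second time with the normalization $\tilde h(0)=0$, $\tilde h(-1)=-1$, for which $\tilde h(1)=m_h(x,t)$, so the same Koebe-plus-$P$ estimate bounds $|m_h(x,t)-1|$ by the same $\delta$. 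With that symmetrization, and with a precise citation for the estimate $|f^\mu(z)-z|\le C(k,E)\Vert\mu\Vert_\infty$ on compact sets $E$, your proof is complete.
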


\begin{proof}
For each $t \in (0,1/2]$, 
we define a Beltrami coefficient $\mu_t(z)$ by letting $\mu_t(z)=\mu_F(z)$ on $\{z \in \H2 \mid y>\sqrt{t}\}$ and $\mu_t(z) \equiv 0$ elsewhere.
Let $F_t$ be the quasiconformal self-homeomorphism of $\H2$ with complex dilatation $\mu_t$ and with $F_t(\infty)=\infty$,
and $h_t$ the quasiconformal self-homeomorphism of $\H2$ such that $F=h_t \circ F_t$. As $h_t=F \circ F_t^{-1}$,
the complex dilatation of $h_t$ satisfies $|\mu_{h_t}(F_t(z))| \leq \widetilde \varepsilon(\sqrt{t})$ for
almost every $z \in \H2$.
In particular, there is a constant $c'>0$ depending only on $K$ such that
the maximal dilatation of $h_t$ is estimated as $K(h_t) \leq 1+c'\widetilde \varepsilon(\sqrt{t})$.

By reflection with respect to $\R$, we may assume that $F_t$ is a quasiconformal self-homeomorphism of $\C$.
The restriction of $F_t$ to the strip domain $\{z \in \C \mid |y|<\sqrt{t}\}$ is conformal.
For each $x \in \R$, we consider the ball of radius $\sqrt{t}$ with center $x$ and apply 
the Koebe distortion theorem (Proposition \ref{Koebe} below) to 
the conformal homeomorphism $F_t$ on this disk. Then, 
$$
\frac{|F_t'(x)|t}{(1+t/\sqrt{t})^2} \leq F_t(x+t)-F_t(x) \leq \frac{|F_t'(x)|t}{(1-t/\sqrt{t})^2}.
$$
The middle term can be replaced with $F_t(x)-F_t(x-t)$. 
This leads us to the following estimate for the quasisymmetry quotient $m_{F_t}(x,t)$ of $F_t|_{\R}$:
$$
\frac{(1-\sqrt{t})^2}{(1+\sqrt{t})^2}
\leq m_{F_t}(x,t)=\frac{F_t(x+t)-F_t(x)}{F_t(x)-F_t(x-t)} \leq \frac{(1+\sqrt{t})^2}{(1-\sqrt{t})^2}.
$$
In particular, there is an absolute constant $R'>0$ such that
$m_{F_t}(x,t)^{\pm 1} \leq 1+R'\sqrt{t}$ for $0<t \leq 1/2$.

Next, we apply the quasiconformal homeomorphism $h_t$ to the points $F_t(x-t)$, $F_t(x)$, and $F_t(x+t)$,
which are mapped to $h(x-t)$, $h(x)$, and $h(x+t)$, respectively.
We note that the quasisymmetry quotients can be given by
the conformal moduli as follows:
\begin{align*}
m_{F_t}(x,t)
&=\lambda(\modulus \H2(F_t(x-t),F_t(x),F_t(x+t),\infty));\\
m_{h}(x,t)
&=\lambda(\modulus \H2(h(x-t),h(x),h(x+t),\infty)).
\end{align*}
Here, $\modulus Q(x_1,x_2,x_3,x_4) \in (0,\infty)$ stands for the conformal modulus of 
a quadrilateral $Q$ with four positively ordered vertices $x_1, x_2, x_3, x_4 \in\partial Q$, and 
$\lambda:(0,\infty) \to (0,\infty)$ is the distortion function,
which transforms conformal moduli to quasisymmetry quotients
(see Section I.2.4 of \cite{Leh} and Section II.6 of \cite{LV}).

Moreover, the ratio of the conformal moduli are bounded by the maximal dilatation $K(h_t) \leq 1+c'\widetilde \varepsilon(\sqrt{t})$:
$$
\frac{1}{K(h_t)} \leq \frac{\modulus \H2(h(x-t),h(x),h(x+t),\infty)}{\modulus \H2(F_t(x-t),F_t(x),F_t(x+t),\infty)} \leq K(h_t).
$$
Plugging the quasisymmetry quotients in this inequality gives 
\begin{align*}
m_{h}(x,t)
&=\lambda(\modulus \H2(h(x-t),h(x),h(x+t),\infty))\\
&\leq \lambda(K(h_t) \modulus \H2(F_t(x-t),F_t(x),F_t(x+t),\infty))\\
&=\lambda(K(h_t) \lambda^{-1} (m_{F_t}(x,t)))\\
&\leq \lambda((1+c'\widetilde \varepsilon(\sqrt{t})) \lambda^{-1} (1+R'\sqrt{t}))
\end{align*}
for all $x \in \R$ and $t \in (0,1/2]$.
An estimate for $m_{h}(x,t)^{-1}$ is similarly obtained.
Because $\lambda$ is continuous and increasing with $\lambda(1)=1$ and
differentiable at $1$ with a non-vanishing derivative (see e.g. \cite{AVV}), we see that
the last term can be represented as $1+\varepsilon(t)$ for
a gauge function 
$\varepsilon(t)$ as in the statement of the theorem.
\end{proof}

We review the Koebe distortion theorem, which 
includes the one-quarter theorem
(see Theorem 1.3 in \cite{P}).

\begin{proposition}\label{Koebe}
A conformal homeomorphism $f$ of $\D$ into $\C$ satisfies
\begin{align*}
&\quad |f'(0)|\frac{|z|}{(1+|z|)^2} \leq |f(z)-f(0)| \leq |f'(0)|\frac{|z|}{(1-|z|)^2};\\
&\quad |f'(0)|\frac{1-|z|}{(1+|z|)^3} \leq |f'(z)| \leq |f'(0)|\frac{1+|z|}{(1-|z|)^3}
\end{align*}
for every $z \in \D$. The first inequality in the former line in particular shows that the image $f(\D)$ contains a disk
with its center at $f(0)$ and radius $|f'(0)|/4$.
\end{proposition}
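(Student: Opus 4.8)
The plan is to reduce to the normalized class $S$ of univalent holomorphic functions $f$ on $\D$ with $f(0)=0$ and $f'(0)=1$. For an arbitrary conformal $f:\D\to\C$, the function $(f(z)-f(0))/f'(0)$ lies in $S$, and the asserted inequalities are precisely the normalized estimates multiplied back by $|f'(0)|$; the concluding disk of radius $|f'(0)|/4$ about $f(0)$ is the rescaling of the disk of radius $1/4$ about the origin. So it suffices to bound $|f(z)|$ and $|f'(z)|$ for $f\in S$. The first step is the Bieberbach bound $|a_2|\le 2$ for $f(z)=z+a_2z^2+\cdots\in S$, which comes from the area theorem: a univalent $g(\zeta)=\zeta+b_0+\sum_{n\ge 1}b_n\zeta^{-n}$ on $\{|\zeta|>1\}$ satisfies $\sum_{n\ge 1}n|b_n|^2\le 1$, proved by writing the area of the complement of $g(\{|\zeta|>1\})$ as a Fourier--Green integral and using that it is non-negative. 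Applying this to the odd square-root transform $g(\zeta)=1/\sqrt{f(1/\zeta^2)}$, which is univalent on the exterior disk with $b_1=-a_2/2$, yields $|a_2|\le 2$.

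The heart of the argument is the Koebe transform. For fixed $\zeta\in\D$,
$$
F(z)=\frac{f\!\left(\dfrac{z+\zeta}{1+\bar\zeta z}\right)-f(\zeta)}{(1-|\zeta|^2)\,f'(\zeta)}
$$
again belongs to $S$. Expanding to second order and reading off its coefficient $a_2(F)=\tfrac12\bigl[(1-|\zeta|^2)f''(\zeta)/f'(\zeta)-2\bar\zeta\bigr]$, the bound $|a_2(F)|\le 2$ gives the pointwise inequality $\bigl|(1-|\zeta|^2)f''(\zeta)/f'(\zeta)-2\bar\zeta\bigr|\le 4$. Writing $\zeta=re^{i\theta}$, multiplying the quantity inside by $e^{i\theta}$ and taking real parts turns this into
$$
\frac{2r-4}{1-r^2}\le\frac{\partial}{\partial r}\log|f'(re^{i\theta})|\le\frac{2r+4}{1-r^2}.
$$
Integrating from $0$ to $|z|$ produces the two-sided bound on $|f'(z)|$, i.e.\ the second displayed line of the proposition; integrating $|f'|$ along the radial segment from $0$ to $z$ then gives the upper growth bound $|f(z)|\le|z|/(1-|z|)^2$.

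It remains to prove the one-quarter theorem and the lower growth bound. If $w\notin f(\D)$ for $f\in S$, then $wf/(w-f)=z+(a_2+w^{-1})z^2+\cdots$ is again in $S$, so $|a_2+w^{-1}|\le 2$; together with $|a_2|\le 2$ this forces $|w|\ge 1/4$, which is the one-quarter statement. For the lower growth bound, when $|f(z)|\ge 1/4$ the inequality $|f(z)|\ge|z|/(1+|z|)^2$ is automatic since the right side never exceeds $1/4$; otherwise the segment $[0,f(z)]$ lies in $f(\D)$, and pulling it back by $f^{-1}$ to a curve $C$ from $0$ to $z$ gives $|f(z)|=\int_C|f'|\,|d\xi|\ge\int_C\frac{1-|\xi|}{(1+|\xi|)^3}\,|d\xi|\ge\int_0^{|z|}\frac{1-\rho}{(1+\rho)^3}\,d\rho=\frac{|z|}{(1+|z|)^2}$, where the last step uses $|d\xi|\ge d|\xi|$ and that the radial integrand depends only on $|\xi|$, so its line integral along $C$ is controlled by its endpoint values.

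The step I expect to require the most care is the Koebe-transform computation: one must compose $f$ with the disk automorphism, expand about $z=0$ to second order, and correctly isolate $a_2(F)$ before extracting the sharp radial inequality by choosing the right direction in which to take the real part. Everything after that reduces to the two elementary integrations recorded above.
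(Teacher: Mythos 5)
Your proposal is correct. Note that the paper itself offers no proof of this proposition: it is stated as a standard fact with a citation to Pommerenke \cite[Theorem 1.3]{P}, so there is no ``paper's argument'' to compare against. What you have written is the classical self-contained derivation: the area theorem for the exterior class, the odd square-root transform giving Bieberbach's bound $|a_2|\le 2$, the Koebe transform $F$ with $a_2(F)=\tfrac12\bigl[(1-|\zeta|^2)f''(\zeta)/f'(\zeta)-2\bar\zeta\bigr]$ yielding the sharp radial differential inequality for $\log|f'|$, and the two integrations giving distortion and growth. I checked the computations: the antiderivative of $(2r+4)/(1-r^2)$ is $\log\bigl((1+r)/(1-r)^3\bigr)$, the radial integral of $(1+\rho)/(1-\rho)^3$ is $|z|/(1-|z|)^2$, and the integral of $(1-\rho)/(1+\rho)^3$ is $|z|/(1+|z|)^2$, all as you assert; the omitted-value trick $wf/(w-f)$ correctly gives $|w|\ge 1/4$, and your case split for the lower growth bound (using that $|z|/(1+|z|)^2\le 1/4$ always, and otherwise pulling back the segment $[0,f(z)]$) is the standard and valid way to close the argument. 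The only places where a referee might ask for one more line are the univalence of the odd square-root transform (the usual oddness argument, which you implicitly invoke by naming it) and the inequality $\int_C\psi(|\xi|)\,|d\xi|\ge\int_0^{|z|}\psi(\rho)\,d\rho$, which is justified by $|d\xi|\ge d|\xi|$ together with the fundamental theorem of calculus applied to an antiderivative of $\psi$ along $C$ — exactly the mechanism you describe. Proving the one-quarter theorem first and deducing the lower growth bound from it (rather than letting $|z|\to 1$ in the growth bound) is a legitimate ordering and changes nothing.
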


Using the Beurling--Ahlfors extension, we can also define a quasiconformal extension of
a quasisymmetric self-homeomorphism $g$ of $\S1$ to $\D$. Actually, for the lift $\widetilde g:\R \to \R$ of $g$
under the universal cover $u:\R \to \S1$, we take the Beurling--Ahlfors extension $F:\H2 \to \H2$ of $\widetilde g$.
Here, we also use the extension of $u$ to the holomorphic universal cover $u:\H2 \to \D-\{0\}$
defined by $u(z)=e^{2\pi iz}$.
By projecting down $F$ to a quasiconformal self-homeomorphism of $\D-\{0\}$ by the holomorphic universal cover $u$ and
filling the puncture $0$, we obtain a quasiconformal self-homeomorphism $f$ of $\D$. 
By this correspondence $g \mapsto f$, we have a map 
$$
e_{\rm BA}:\QS \to \QC(\D),
$$
which satisfies $q \circ e_{\rm BA}=\id|_{\QS}$.

Unlike the Douady--Earle extension $e_{\rm DE}$, the Beurling--Ahlfors extension $e_{\rm BA}$
does not have conformal naturality. Accordingly, it does not descend to a section $T \to \Bel(\D)$ naturally.
In order to define a section, we use the normalized quasisymmetric homeomorphism $g \in \QS_*$
as a representative of an element $[g] \in T$. From this $g$, 
we make the quasiconformal self-homeomorphism $f$ of $\D$ 
as above, and then take its complex dilatation $\mu_f$. By this correspondence $[g] \mapsto \mu_f$, we have a map
$s_{BA}:T \to \Bel(\D)$, which is a section for the Teich\-m\"ul\-ler projection $\pi:\Bel(\D) \to T$.
It can also be proved that $s_{BA}$ is continuous.

We say that
a quasiconformal homeomorphism $f \in \QC(\D)$ is {\it asymptotically conformal} if
the complex dilatation $\mu_{f}(z)$ vanishes at the boundary $\S1$. This means that
$$
\lim_{t \to 0} \esssup\{\,|\mu_{f}(z)| \mid |z| \geq 1-t\}=0.
$$
We denote the subset of $\QC(\D)$ consisting of all asymptotically conformal homeomorphisms 
by $\AC(\D)$. Theorem \ref{Carleson1} implies that the restriction of $e_{\rm BA}$ to $\Sym$ gives
$$
e_{\rm BA}:\Sym \to \AC(\D).
$$
Moreover, Theorem \ref{ac-sym} implies that the restriction of $q$ to $\AC(\D)$ gives
$$
q:\AC(\D) \to \Sym.
$$
We note that, for a given point $z_0$ in $\D$, there is a quasiconformal self-homeo\-morphism $\phi$ 
with $\phi(z_0)=0$ and $q(\phi)=\id|_{\S1}$ whose complex dilatation vanishes outside some compact subset in $\D$.
The composition of such a map $\phi$ makes any asymptotically conformal self-homeomorphism of $\D$ fix $0$ 
without changing the property of vanishing at the boundary. 

By the above two claims, we have the following result attributed to Fehlmann \cite{Feh} in Gardiner and Sullivan \cite{GS}:

\begin{corollary}\label{acsym}
A quasisymmetric homeomorphism $g$ is in $\Sym$ if and only if
$g$ extends continuously to a quasiconformal homeomorphism in $\AC(\D)$.
\end{corollary}

By the chain rule of complex dilatations, the composition of asymptotically conformal
self-homeomorphisms of $\D$ is also asymptotically conformal. Hence, $\AC(\D)$ is a
subgroup of $\QC(\D)$. Accordingly, Corollary \ref{acsym} shows that
$\Sym$ is a subgroup of $\QS$.
Moreover, it was proved in \cite{GS} that $\Sym$ is the characteristic topological subgroup of 
the partial topological group $\QS$ for which the neighborhood base is given at $\id$ by using the quasisymmetry constant and
is distributed at every point $g \in \QS$ by the right translation.

In the rest of this section,
we review 
the Teich\-m\"ul\-ler space of symmetric homeomorphisms, which is already well-known in
the theory of asymptotic Teich\-m\"ul\-ler spaces. This will be a prototype of
our construction of the Teich\-m\"ul\-ler space of circle diffeomorphisms.

\begin{definition}
The {\it little subspace} $T_0$ 
of the universal Teich\-m\"ul\-ler space $T$
(or the Teich\-m\"ul\-ler space of symmetric homeomorphisms) 
is defined as 
$$
T_0=\Mob(\S1) \backslash \Sym \subset T=\Mob(\S1) \backslash \QS.
$$
\end{definition}

We define the subset $\Bel_0(\D)$ of $\Bel(\D)$ consisting of all
Beltrami coefficients vanishing at the boundary. As $\Mob(\D) \backslash \AC(\D)$ can be identified with
$\Bel_0(\D)$, Corollary \ref{acsym} implies that the image of $\Bel_0(\D)$ under
the Teich\-m\"ul\-ler projection $\pi:\Bel(\D) \to T$ is $T_0$. 
This also implies that its Bers embedding 
$\beta(T_0)$ coincides with $\Phi(\Bel_0(\D))$ for the Bers projection $\Phi:\Bel(\D) \to B(\D^*)$.
Under the group structure $\ast$ of $\Bel(\D)$, $\Bel_0(\D)$ is a subgroup.
Correspondingly, $T_0$ is a subgroup of $(T,\ast)$.
In fact, $T_0 \subset T$ is a topological subgroup as $T_0 \cong \Sym \cap \QS_*$ and $\Sym$ is a topological subgroup.

It was proved by Earle, Markovic, and Saric \cite{EMS} that the Douady--Earle extension $e_{\rm DE}(g)$ of
a symmetric homeomorphism $g \in \Sym$ is asymptotically conformal;
$e_{\rm DE}:\Sym \to \AC(\D)$ is a section of $q:\AC(\D) \to \Sym$.
Hence, the conformally natural section $s_{\rm DE}:T \to \Bel(\D)$ sends $T_0$ to $\Bel_0(\D)$.
We note that $\Bel_0(\D)$ is the unit ball of the Banach subspace $L_0^\infty(\D) \subset L^\infty(\D)$
consisting of bounded measurable functions vanishing at the boundary:
$\Bel_0(\D)=\Bel(\D) \cap L_0^\infty(\D)$. In particular, $\Bel_0(\D)$ is contractible.
Therefore, $T_0$ is also contractible. 

To consider the complex structure of $T_0$, we introduce the Banach subspace
$B_0(\D^*)$ of $B(\D^*)$ as follows:
$$
B_0(\D^*)=\{\varphi \in B(\D^*) \mid \lim_{|z| \to 1} \rho_{\D^*}^{-2}(z)|\varphi(z)|=0\}.
$$
An element in $B_0(\D^*)$ is also called vanishing at the boundary.
The following theorem, which was
essentially proved by
Becker and Pommerenke \cite{BP}, can be found in \cite{GS}. 

\begin{theorem}\label{B_0}
For the Bers projection $\Phi:\Bel(\D) \to B(\D^*)$,
it holds that
$$
\Phi(\Bel_0(\D))=\beta(T) \cap B_0(\D^*).
$$ 
\end{theorem}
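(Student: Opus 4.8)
The plan is to establish the two inclusions separately, after the trivial observation that $\Phi(\Bel_0(\D)) \subseteq \Phi(\Bel(\D)) = \beta(T)$. Since it was already noted above that $\Phi(\Bel_0(\D)) = \beta(T_0)$, the substance of the theorem is the pair of assertions $\Phi(\Bel_0(\D)) \subseteq B_0(\D^*)$ (an asymptotically conformal Beltrami coefficient yields a Schwarzian vanishing at the boundary) and $\beta(T) \cap B_0(\D^*) \subseteq \Phi(\Bel_0(\D))$ (a Schwarzian vanishing at the boundary has a representative Beltrami coefficient vanishing at the boundary). Since $\beta$ is injective, this amounts to showing that $g \in \Sym$ if and only if $\beta([g]) \in B_0(\D^*)$.

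For $\Phi(\Bel_0(\D)) \subseteq B_0(\D^*)$, I would isolate a uniformly small-dilatation factor that is conformal on $\D^*$. Fix $\mu \in \Bel_0(\D)$ with $\Phi(\mu) = \varphi$, let $\epsilon > 0$, and choose $r \in (0,1)$ with $|\mu(z)| \le \epsilon$ for almost every $z$ with $r \le |z| < 1$. Put $\mu_1 = \mu$ on $\{r \le |z| < 1\}$ and $\mu_1 \equiv 0$ elsewhere, and let $G = f_{\mu_1}$; then $G$ is conformal on $\D^*$ with $S_{G|_{\D^*}} = \Phi(\mu_1)$, and Proposition \ref{B-conti} (with $\Phi(0) = 0$) gives $\Vert \Phi(\mu_1) \Vert_\infty \le 3\Vert \mu_1 \Vert_\infty \le 3\epsilon$, uniformly on $\D^*$. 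Writing $f_\mu = F \circ G$ with $F = f_\mu \circ G^{-1}$, the chain rule for complex dilatations shows that $\mu_F$ is supported in $G(\{|z| < r\})$, so $F$ is conformal on a neighborhood of $\overline{G(\D^*)}$ and $S_F$ stays bounded near $\partial G(\D^*)$. Applying the Schwarzian cocycle identity on $\D^*$ and converting the $F$-term to the hyperbolic metric of $G(\D^*)$, the weight $\rho_{G(\D^*)}^{-2}(G(z))$ tends to $0$ as $z \to \S1$, so that term vanishes in the limit while the $G$-term is bounded by $3\epsilon$. Hence $\limsup_{|z| \to 1} \rho_{\D^*}^{-2}(z)|\varphi(z)| \le 3\epsilon$, and letting $\epsilon \to 0$ yields $\varphi \in B_0(\D^*)$.

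For the reverse inclusion, given $\varphi \in \beta(T) \cap B_0(\D^*)$ I would realize it by an asymptotically conformal extension and invoke Corollary \ref{acsym}. Let $f$ be the conformal map of $\D^*$ with $S_f = \varphi$. It suffices to extend $f$ to a quasiconformal automorphism $\hat f$ of $\Chat$ whose complex dilatation on $\D$ vanishes at $\S1$: then $\hat\mu = \mu_{\hat f}|_\D \in \Bel_0(\D)$ satisfies $\Phi(\hat\mu) = S_f = \varphi$ automatically, and $q(\hat f) \in \Sym$ by Corollary \ref{acsym}, giving $\varphi \in \Phi(\Bel_0(\D))$. To build $\hat f$ near $\S1$, where membership in $B_0(\D^*)$ forces the hyperbolic norm of $\varphi$ to be small, I would use the classical Ahlfors--Weill reflection (see \cite{Leh}); its dilatation satisfies $|\mu(z)| = \tfrac12 (1-|z|^2)^2 |\varphi(1/\bar z)| = 2|z|^4 \rho_{\D^*}^{-2}(1/\bar z)|\varphi(1/\bar z)|$, which tends to $0$ as $|z| \to 1$ precisely because $\varphi \in B_0(\D^*)$. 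On the remaining bulk $\{|z| \le r_0\}$ I would fill in by an arbitrary quasiconformal map matching the boundary values on $\{|z| = r_0\}$, whose dilatation there is irrelevant to vanishing at $\S1$.

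The hard part will be the global injectivity in this last step: the Ahlfors--Weill reflection is univalent only under the hypothesis $\Vert \varphi \Vert_\infty < 1/2$, which can fail since $\Vert \varphi \Vert_\infty$ may be as large as $3/2$ by Nehari--Kraus. I would resolve this by confining the region where $\varphi$ has large hyperbolic norm to the center of $\D$: because $\varphi \in B_0(\D^*)$, for $r_0$ close to $1$ the reflection on the thin collar $\{r_0 \le |z| < 1\}$ is a small quasiconformal perturbation of the anticonformal reflection, hence injective there, and its inner boundary is a Jordan curve bounding a region that the bulk map can fill homeomorphically. This localization is exactly the phenomenon established by Becker--Pommerenke \cite{BP} and recorded by Gardiner--Sullivan \cite{GS}, so the reverse inclusion may alternatively be quoted directly from those sources.
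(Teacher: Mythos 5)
Your proposal is correct in substance, but note that the paper itself offers no proof of Theorem \ref{B_0} at all: it is quoted from Gardiner--Sullivan \cite{GS} and Becker--Pommerenke \cite{BP}, so any argument you supply is necessarily ``different'' from the paper's. Your forward inclusion is a clean, self-contained proof: the factorization $f_\mu = F \circ G$ with $G = f_{\mu_1}$ conformal outside a thin boundary annulus where $|\mu|\le\epsilon$, the bound $\Vert \Phi(\mu_1)\Vert_\infty \le 3\epsilon$ from the estimate in Proposition \ref{B-conti}, the Schwarzian cocycle, and the observation that $\rho_{G(\D^*)}^{-2}(G(z))|S_F(G(z))| \to 0$ because $F$ is conformal across $\partial G(\D^*)$ while the hyperbolic weight degenerates --- all of this is sound, and it is the qualitative shadow of what the paper does quantitatively in Section \ref{4}: Lemma \ref{decomposition} performs the same annular decomposition (with many annuli rather than one) at the level of pre-Schwarzians to extract the $O(t^\alpha)$ rate, so your single-annulus version is exactly the right simplification when only vanishing at the boundary is needed. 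For the reverse inclusion, the one soft spot is the gluing of the collar reflection to a bulk quasiconformal filling; you correctly diagnose the global-injectivity obstruction and the correct cure (localization to a thin collar where the reflection is a small perturbation), but rather than rebuilding this by hand you could simply invoke the paper's own Lemma \ref{localext} (Becker \cite[Theorem 5.4]{Bec1}): its first assertion assumes only $\varphi = S_f \in B_0(\D^*)$ and already delivers a global quasiconformal extension agreeing with the reflection formula $F$ on an annulus $\{1-t<|z|<1\}$, with dilatation $-2\rho_{\D^*}^{-2}(z^*)(zz^*)^2\varphi(z^*)$ there; that this tends to $0$ as $|z|\to 1$ is then immediate from $\varphi \in B_0(\D^*)$, and the bulk dilatation is irrelevant, exactly as you say. (The discrepancy between your Ahlfors--Weill constant $\tfrac12(1-|z|^2)^2$ and the paper's $2\rho_{\D^*}^{-2}(z^*)$ is a harmless factor $|z|^{-4}\to 1$.) With that substitution your argument is complete.
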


By this theorem, we have $\beta(T_0)=\beta(T) \cap B_0(\D^*)$. Hence, $T_0$ is identified with
a bounded contractible domain of the complex Banach space $B_0(\D^*)$.

\section{The decay order of Schwarzian and pre-Schwarzian derivatives}\label{4}

We focus on the decay order of a Beltrami coefficient $\mu \in \Bel_0(\D)$ vanishing at the boundary $\S1$.
We define
$$
\kappa_\mu(t)=\esssup_{1-t \leq |\zeta| <1} |\mu(\zeta)| \qquad (0<t \leq 1)
$$
for $\mu \in \Bel_0(\D)$, which satisfies 
$\kappa_\mu(t) \to 0$ as $t \to 0$.
Let $\alpha \in (0,1)$ be a fixed constant.
For a Beltrami coefficient $\mu \in \Bel_0(\D)$, we define 
a new norm by
$$
\Vert \mu \Vert_{\infty,\alpha}=\esssup_{\zeta \in \D} \rho^{\alpha}_{\D}(\zeta)|\mu(\zeta)|.
$$
Clearly, $\Vert \mu \Vert_{\infty,\alpha}<\infty$ if and only if $\kappa_\mu(t)=O(t^\alpha)$ $(t \to 0)$.

\begin{definition}
Let $\alpha$ be a constant with $0<\alpha<1$.
The space of Beltrami coefficients $ \mu \in \Bel(\D)$ with $\Vert \mu \Vert_{\infty,\alpha}<\infty$
is denoted by $\Bel_0^{\alpha}(\D)$.
\end{definition}

As in the definition of the Bers projection,
we extend a Beltrami coefficient $\mu \in \Bel^\alpha_0(\D)$ to $\Chat$
by setting $\mu(z) \equiv 0$ for $z \in \D^*$ and take 
a quasiconformal homeomorphism $f_\mu: \Chat \to \Chat$
having the complex dilatation $\mu$. Then,
$f_\mu|_{\D^*}$ is a conformal homeomorphism (univalent function).
Hereafter, we always give the following normalization for $f_\mu$:
$$
f_\mu(\infty)=\infty; \quad \lim_{z \to \infty} f_\mu'(z)=1.
$$
Equivalently, the Laurent expansion of $f_\nu$ at $\infty$ is
$$
f_\mu(z)=z+b_0+\frac{b_1}{z}+\cdots.
$$
We consider its pre-Schwarzian derivative 
and Schwarzian derivative on $\D^*$, which are
defined respectively as follows:
\begin{align*}
T_{f_\mu|_{\D^*}}(z)&=\frac{f_\mu''(z)}{f_\mu'(z)}; \\
S_{f_\mu|_{\D^*}}(z)&=\left(T_{f_\mu|_{\D^*}}\right)'(z)
-\frac{1}{2}\left(T_{f_\mu|_{\D^*}}(z)\right)^2.
\end{align*}

It was shown in Becker and Pommerenke \cite{BP} that the condition 
$\mu \in \Bel_0(\D)$  
is equivalent to each of the conditions
$$
\lim_{|z| \to 1} \rho_{\D^*}^{-1}(z)|T_{f_\mu|_{\D^*}}(z)|=0; \qquad
\lim_{|z| \to 1} \rho_{\D^*}^{-2}(z)|S_{f_\mu|_{\D^*}}(z)|=0.
$$
To estimate their decay order quantitatively in terms of $\kappa_\mu(t)$, we set
\begin{align*}
\beta_\mu(t)&=\max_{|z| = 1+t}(|z|-1)|T_{f_\mu|_{\D^*}}(z)|,\\ 
\sigma_\mu(t)&=\max_{|z| = 1+t}(|z|-1)^2|S_{f_\mu|_{\D^*}}(z)| \qquad (0<t<\infty).
\end{align*}
It was proved in Theorem 2 of Becker \cite{Bec2} that 
$$
\beta_\mu(t^{1+\varepsilon}) \leq 3 (\kappa_\mu(t)+t^{\varepsilon}),\quad
\sigma_\mu(t^{1+\varepsilon}) \leq \frac{3}{2} (\kappa_\mu(t)+t^{2\varepsilon}) \qquad (0<t \leq 1)
$$
for any $\varepsilon >0$. We note that the above definitions of $\beta_\mu$ and $\sigma_\mu$
are slightly different from those in \cite{Bec2}.

We will improve these estimates regarding the power of $t$
for the case where $\kappa_\mu(t)=O(t^\alpha)$ $(t \to 0)$.
In this case, the elimination of the constant $\varepsilon$ was done by Dyn$'$kin \cite{Dy}.
Our improvement can be stated as follows.

\begin{theorem}\label{basic1}
For every $\alpha \in (0,1)$, there is a constant $C=C(\alpha)>0$ that depends only on $\alpha$
such that
$$
\rho_{\D^*}^{-1}(z)|T_{f_\mu|_{\D^*}}(z)| \leq C \Vert \mu \Vert_{\infty,\alpha} (|z|-1)^\alpha
$$
for every $\mu \in \Bel^\alpha_0(\D)$ and for every $z \in \D^*$.
Equivalently,
$$
\beta_\mu(t) \leq C \Vert \mu \Vert_{\infty,\alpha} \frac{2t^\alpha}{t+2}
$$ 
for every $t>0$.
\end{theorem}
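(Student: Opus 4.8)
The plan is to fix a point $z_0\in\D^*$ with $|z_0|=1+r$, to bound $|T_{f_\mu|_{\D^*}}(z_0)|$ by cutting $\mu$ into dyadic annuli and summing the contributions of each annulus to the pre-Schwarzian through the chain rule for compositions. First I record that $\Vert\mu\Vert_{\infty,\alpha}<\infty$ forces $\kappa_\mu(t)\le\Vert\mu\Vert_{\infty,\alpha}\,t^\alpha$, so that $\mu$ is uniformly small on each annulus near the boundary. Set $\rho_n=1-2^{-n-1}$ and $A_n=\{\rho_{n-1}\le|\zeta|<\rho_n\}$ for $n\ge 1$, with $A_0=\{|\zeta|<1/2\}$; on $A_n$ one has $|\mu|\le k_n:=\kappa_\mu(2^{-n})\le\Vert\mu\Vert_{\infty,\alpha}\,2^{-n\alpha}$. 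Writing $\mu_{(N)}=\mu\,\chi_{\{|\zeta|<\rho_N\}}$ and letting $F_N=f_{\mu_{(N)}}$ carry the same normalization as $f_\mu$, each $F_N$ is conformal on the exterior disk $\{|\zeta|>\rho_N\}\ni z_0$, and since $\Vert\mu_{(N)}-\mu\Vert_\infty\le\kappa_\mu(2^{-N})\to 0$ the maps $F_N$ converge to $f_\mu$ uniformly on compact subsets of $\D^*$, hence in $C^1$ there; thus $T_{F_N}(z_0)\to T_{f_\mu|_{\D^*}}(z_0)$.

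For the telescoping I would write $F_n=\Psi_n\circ F_{n-1}$, where $\Psi_n=F_n\circ F_{n-1}^{-1}$ is the normalized quasiconformal map whose complex dilatation is the push-forward of $\mu\,\chi_{A_n}$ by the conformal map $F_{n-1}$; hence $\Psi_n$ is conformal off the compact set $E_n=F_{n-1}(\{|\zeta|\le\rho_n\})$ and $\Vert\mu_{\Psi_n}\Vert_\infty\le k_n$, the sup-norm being preserved under conformal push-forward. Applying the chain rule $T_{G\circ H}=(T_G\circ H)\,H'+T_H$ repeatedly, and letting $N\to\infty$, gives
\[
T_{f_\mu|_{\D^*}}(z_0)=\sum_{n\ge 0}T_{\Psi_n}\bigl(F_{n-1}(z_0)\bigr)\,F_{n-1}'(z_0),
\]
with the convention $F_{-1}=\mathrm{id}$, the convergence being justified a posteriori by the termwise bound below.

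The heart of the matter is the estimate of the general term, and this is the step I expect to be the main obstacle. The building block is that for a hydrodynamically normalized map $\Psi$ conformal off a compact quasidisk $E$ with $\Vert\mu_\Psi\Vert_\infty\le k$, one has $|T_\Psi(w)|\le C_0\,k/\mathrm{dist}(w,E)$ for $w\notin E$, with $C_0$ absolute provided $E$ has bounded eccentricity; this is the linear dependence of the pre-Schwarzian on the dilatation (as in Becker \cite{Bec2} and in the proof of Proposition \ref{B-conti}), and the conformal buffer between $E$ and $w$ is precisely what eliminates the extra $t^\varepsilon$ of Becker's unbuffered estimate. To apply it I must control the geometry distorted by $F_{n-1}$. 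Since the restriction of $F_{n-1}$ to $\{|\zeta|>\rho_{n-1}\}$ is univalent with hydrodynamic normalization, the area theorem (see \cite{P}) bounds its coefficients absolutely after rescaling by $\rho_{n-1}\asymp 1$, so $E_n$ is comparable to a round disk of radius $\asymp 1$ with absolute constants. The Koebe distortion theorem (Proposition \ref{Koebe}) then gives
\[
\mathrm{dist}\bigl(F_{n-1}(z_0),E_n\bigr)\asymp|F_{n-1}'(z_0)|\,\bigl(|z_0|-\rho_n\bigr)=|F_{n-1}'(z_0)|\,(r+2^{-n-1}),
\]
again with absolute constants. Substituting into the building block, the factor $|F_{n-1}'(z_0)|$ cancels, and each term is bounded by $C_1\,k_n/(r+2^{-n})\le C_1\Vert\mu\Vert_{\infty,\alpha}\,2^{-n\alpha}/(r+2^{-n})$ with $C_1$ absolute. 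This conformally natural cancellation is what renders the final constant independent of $K(f_\mu)$ and of $\Vert\mu\Vert_\infty$; verifying it rigorously — the absolute comparability of $E_n$ to a disk and the derivative cancellation uniformly in $n$ and $r$ — is the delicate part.

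It then remains to sum a numerical series, which converges exactly because $0<\alpha<1$. Using $\rho_{\D^*}^{-1}(z_0)=\tfrac12(|z_0|^2-1)\asymp r$, I would estimate
\[
\rho_{\D^*}^{-1}(z_0)\,|T_{f_\mu|_{\D^*}}(z_0)|\le C_1\Vert\mu\Vert_{\infty,\alpha}\,r\sum_{n\ge 0}\frac{2^{-n\alpha}}{r+2^{-n}},
\]
and split the sum at $2^{-n}\approx r$. For $2^{-n}\ge r$ the summand is at most $2^{n(1-\alpha)}$ and the geometric sum (ratio $2^{1-\alpha}>1$, where one uses $\alpha<1$) is $\asymp r^{\alpha-1}$; for $2^{-n}<r$ the summand is at most $2^{-n\alpha}/r$ and the geometric tail (ratio $2^{-\alpha}<1$, where one uses $\alpha>0$) also contributes $\asymp r^{\alpha-1}$. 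Both pieces yield $r\cdot r^{\alpha-1}=r^\alpha$ up to a constant $C(\alpha)$ depending only on $\alpha$, which is the first displayed inequality. The equivalent bound for $\beta_\mu(t)$ then follows from the pointwise estimate by writing $\rho_{\D^*}^{-1}(z)=\tfrac12(|z|-1)(|z|+1)$ and taking the supremum over $1<|z|\le 1+t$, at worst adjusting $C(\alpha)$ by a factor depending only on $\alpha$.
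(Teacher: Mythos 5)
Your proposal is correct and follows the same architecture as the paper's proof --- decompose $\mu$ into Beltrami coefficients supported on concentric annuli, factor $f_\mu$ accordingly, and bound each factor's contribution to the pre-Schwarzian by the linear-in-$k$ estimate $|T_{\Psi}(\omega)|\le 6k\,\rho_{\Omega^*}(\omega)$ for a map conformal on a simply connected $\Omega^*\ni\infty$ --- but it differs in the one place where the paper invests most of its effort. The paper chooses the annular radii \emph{adaptively}, depending on the point $z$, via the recurrence of Lemma \ref{recurrence}, precisely so that every term of the resulting sum equals $\lambda^{n+1}\ell\tau^\alpha$ and the series is geometric; you instead fix dyadic radii once and for all and evaluate $\sum_n 2^{-n\alpha}/(r+2^{-n})\lesssim_\alpha r^{\alpha-1}$ by splitting at $2^{-n}\approx r$. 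Your summation is correct and arguably simpler: it replaces the paper's recurrence lemma with a two-line computation, at no cost to the conclusion $C=C(\alpha)$.

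Two points in your write-up deserve tightening, though neither is a genuine gap. First, the step you flag as delicate --- absolute comparability of $E_n$ to a round disk via the area theorem, and "bounded eccentricity" --- is unnecessary. Since $E_n$ is a closed topological disk, $\Omega_n^*=\Chat\setminus E_n$ is simply connected, and conformal invariance of the hyperbolic metric gives directly $\rho_{\Omega_n^*}(F_{n-1}(z_0))\,|F_{n-1}'(z_0)|=\rho_{\{|\zeta|>\rho_n\}}(z_0)=2\rho_n/(|z_0|^2-\rho_n^2)$; the derivative cancellation you want is exactly this identity, with no geometric control of $E_n$ needed. (If you insist on the Euclidean formulation, the lower bound $\mathrm{dist}(F_{n-1}(z_0),E_n)\ge\frac14|F_{n-1}'(z_0)|(|z_0|-\rho_n)$ follows from the Koebe one-quarter theorem applied to $F_{n-1}$ on the disk $\Delta(z_0,|z_0|-\rho_n)\subset\{|\zeta|>\rho_{n-1}\}$, again with no hypothesis on $E_n$.) Second, your final step uses $\rho_{\D^*}^{-1}(z_0)\asymp r$, which fails as $|z_0|\to\infty$; for $r\ge 1$ the claimed inequality is anyway trivial from $\rho_{\D^*}^{-1}(z)|T_{f_\mu|_{\D^*}}(z)|\le 3\Vert\mu\Vert_\infty\le 3\Vert\mu\Vert_{\infty,\alpha}\le 3\Vert\mu\Vert_{\infty,\alpha}r^\alpha$, or one can keep the exact factor $(|z_0|^2-1)\rho_n/(|z_0|^2-\rho_n^2)\le r/(r+2^{-n-1})$ throughout, as the paper does, which handles all $z\in\D^*$ uniformly.
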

 
We decompose a Beltrami coefficient $\mu \in \Bel_0^\alpha(\D)$
suitably into a finite number of Beltrami coefficients whose supports are in mutually disjoint annular domains of $\D$.
Then, a computation of the pre-Schwarzian derivative of the composition of the corresponding conformal homeomorphisms
establishes the estimate.
These steps are given in the following two lemmata.

\begin{lemma}\label{recurrence}
For every $\alpha \in (0,1)$,
there is a constant $\lambda$ with $0 < \lambda <1$ that depends only on $\alpha$ such that,
if a sequence $\{s_n\}_{n=0}^\infty$ of positive numbers satisfies a recurrence relation
$$
\left(\frac{1}{1+s_{n-1}} \right) {s_n}^\alpha=\lambda^n 
$$
for every $n \geq 1$ and $s_0=1$, 
then $\{s_n\}$ is increasing and diverges to $+\infty$.
\end{lemma}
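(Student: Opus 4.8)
The plan is to reduce the divergence to the monotonicity, and then to force the monotonicity by comparing $\{s_n\}$ with a geometric sequence tuned to the intrinsic scale of the recurrence. Solving for $s_n$, the relation reads $s_n=[\lambda^n(1+s_{n-1})]^{1/\alpha}$, so $s_0=1$ together with $\lambda$ determines the positive sequence uniquely. I would first note that divergence is a free consequence of monotonicity: an increasing sequence satisfies $s_n\ge s_0=1$, while if it were bounded by some $S$ then $s_n^\alpha=\lambda^n(1+s_{n-1})\le \lambda^n(1+S)\to 0$ would force $s_n\to 0$, a contradiction. Hence an increasing $\{s_n\}$ is automatically unbounded and tends to $+\infty$, and the whole problem reduces to choosing $\lambda$ so that $\{s_n\}$ increases.

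For the monotonicity I would introduce the scale $r=\lambda^{-1/(1-\alpha)}>1$, for which $s_n=r^n$ is an approximate solution, and the constant $K=r^{1/(1-\alpha)}$, characterized by $K^{1-\alpha}=r$; note $K\ge r>1$. The crucial feature is that the lower bound $s_n\ge K r^n$ reproduces itself under the recurrence. Indeed, assuming it at level $n-1$ and using $\lambda^n=r^{-(1-\alpha)n}$ and $K^{1-\alpha}=r$,
\[
s_n^\alpha=\lambda^n(1+s_{n-1})\ge \lambda^n s_{n-1}\ge \lambda^n K r^{n-1}=K r^{\alpha n-1}=(K r^n)^\alpha,
\]
so $s_n\ge K r^n$. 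Thus the inductive step costs nothing once the bound holds once, and the entire difficulty is concentrated in the base case $s_1\ge K r$. Since $s_1=(2\lambda)^{1/\alpha}$, a direct computation shows that $s_1\ge K r$ is equivalent to $K\le 2$, that is to $\lambda\ge 2^{-(1-\alpha)^2}$; I would simply fix any $\lambda$ in $[\,2^{-(1-\alpha)^2},1)$, a constant depending only on $\alpha$.

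It remains to extract monotonicity from the bound $s_{n-1}\ge K r^{n-1}$. For $n\ge 2$, since $K\ge r$ this gives $s_{n-1}\ge r^n$, hence $\lambda^n s_{n-1}^{1-\alpha}\ge \lambda^n r^{(1-\alpha)n}=1$, i.e. $\lambda^n s_{n-1}\ge s_{n-1}^\alpha$; consequently $s_n^\alpha=\lambda^n(1+s_{n-1})>\lambda^n s_{n-1}\ge s_{n-1}^\alpha$, so $s_n>s_{n-1}$. The first step $s_1>s_0=1$ is immediate because $\lambda\ge 2^{-(1-\alpha)^2}>1/2$ gives $s_1=(2\lambda)^{1/\alpha}>1$. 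Together with the reduction of the first paragraph, this shows that $\{s_n\}$ is increasing and diverges to $+\infty$.

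The step I expect to be most delicate, and the reason naive estimates fail, is the inductive propagation: because the recurrence amplifies with exponent $1/\alpha>1$, the sequence in fact grows super-geometrically, so any fixed or loosely chosen geometric lower bound is eventually destroyed by the decaying factor $\lambda^n$ and does not survive the induction. The point of working with the exact scale $K r^n$ — with $r=\lambda^{-1/(1-\alpha)}$ and $K^{1-\alpha}=r$ the fixed-point data of the rescaled map — is precisely that the bound becomes self-similar and the induction closes with equality, leaving only the single scalar inequality $K\le 2$ to be arranged by taking $\lambda$ sufficiently close to $1$.
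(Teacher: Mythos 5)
Your proof is correct, but it takes a genuinely different route from the paper. The paper compares $\{s_n\}$ with the auxiliary sequence $s'_n=\lambda^{n/\alpha}(s'_{n-1})^{1/\alpha}$ (dropping the ``$1+$''), passes to the ratios $b_n=s'_{n+1}/s'_n$, and analyzes the resulting linear recursion $\log b_n=\alpha^{-1}\log b_{n-1}+\alpha^{-1}\log\lambda$ to show $\log b_n$ stays bounded away from $0$ once $\lambda>2^{-(1-\alpha)^2/(1+\alpha+\alpha^2)}$. You instead propagate a single self-similar invariant $s_n\ge Kr^n$ with $r=\lambda^{-1/(1-\alpha)}$ and $K=r^{1/(1-\alpha)}$ chosen so that the inductive step closes with equality, reducing everything to the base case $s_1\ge Kr$, which you correctly identify as equivalent to $K\le 2$, i.e.\ $\lambda\ge 2^{-(1-\alpha)^2}$ (I checked the exponent arithmetic: $2-\alpha+(1-\alpha)^2/\alpha=1/\alpha$, so the equivalence is exact). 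Your deduction of monotonicity from the invariant ($s_{n-1}\ge Kr^{n-1}\ge r^n$ forces $\lambda^n s_{n-1}\ge s_{n-1}^\alpha$, hence $s_n>s_{n-1}$) and the observation that a bounded increasing sequence would be driven to $0$ by the factor $\lambda^n$ are both sound, and in fact the bound $s_n\ge Kr^n$ already gives divergence directly. What each approach buys: the paper's ratio analysis is a generic tool for such recursions and localizes the difficulty in one inequality on $\log b_1$; your invariant exploits the exact fixed-point scaling of the map and yields a slightly cleaner and marginally weaker admissibility threshold for $\lambda$ (note $2^{-(1-\alpha)^2}<2^{-(1-\alpha)^2/(1+\alpha+\alpha^2)}$, so your range of admissible $\lambda$ is larger), at the cost of having to guess the right ansatz $Kr^n$ in advance. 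Either argument suffices for the lemma, which only needs existence of one admissible $\lambda=\lambda(\alpha)$.
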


\begin{proof}
The recurrence relation is equivalent to
$$
s_n=\lambda^{\frac{n}{\alpha}}(1+s_{n-1})^{\frac{1}{\alpha}}
$$
for every $n \geq 1$ and $s_0=1$. For comparison with this formula, we consider another recurrence relation
$$
s'_n=\lambda^{\frac{n}{\alpha}}{s'_{n-1}}^{\frac{1}{\alpha}}
$$
for every $n \geq 2$ by giving the initial value $s'_1=s_1=(2\lambda)^{1/\alpha}$.
It is easy to see that $s_n \geq s'_n$ for every $n \geq 1$, and hence,
$\lim_{n \to \infty} s'_n=+\infty$ implies $\lim_{n \to \infty} s_n=+\infty$.
Moreover, if $\{s'_n\}$ is increasing then so is $\{s_n\}$.

Let $b_n=s'_{n+1}/s'_n$. Then, we have
$$
b_n=\lambda^{\frac{1}{\alpha}} (b_{n-1})^{\frac{1}{\alpha}}
$$
for every $n \geq 2$ and 
$$
b_1=\frac{s'_2}{s'_1}=\frac{\lambda^{\frac{2}{\alpha}}(2\lambda)^{\frac{1}{\alpha^2}}}{(2\lambda)^{\frac{1}{\alpha}}}.
$$
Taking the logarithm yields
$$
\log b_n=\frac{1}{\alpha} \log b_{n-1} +\frac{1}{\alpha} \log\lambda
$$
with
$$
\log b_1=\left(\frac{1}{\alpha^2}+\frac{1}{\alpha}\right) \log \lambda+
\left(\frac{1}{\alpha^2}-\frac{1}{\alpha}\right)\log 2.
$$
This shows that if
$$
\log b_1 > \frac{-\log \lambda}{1-\alpha},
$$
then $\log b_n$ are positive and uniformly bounded away from $0$ for all $n \geq 1$.
By choosing $\lambda<1$ such that it is sufficiently close to $1$, we have such a situation.
For instance, $\lambda$ can be chosen so that
$\lambda>(1/2)^{(1-\alpha)^2/(1+\alpha+\alpha^2)}$.
This proves that $\{s'_n\}$ is increasing and diverges to $+\infty$.
\end{proof}

\begin{lemma}\label{decomposition}
For a finite sequence of real numbers
$$
1=r_{-1}>r_0>r_1> \cdots >r_{N}>r_{N+1}=0,
$$
let $A_n=\{\zeta \in \D \mid r_n > |\zeta| \geq r_{n+1}\}$ be an annulus (or a disk) in $\D$ for each $n=-1,0,\ldots,N$.
For any $\mu \in \Bel(\D)$ and each $n$,
we define a Beltrami coefficient on $\Chat$ by
$$
\mu_n(\zeta)=\left\{
\begin{array}{l} \mu(\zeta) \,\quad (\zeta \in A_n)\\
\ 0  \qquad (\zeta \in \Chat-A_n).
\end{array}
\right.
$$ 
Let $k_n=\Vert \mu_n \Vert_\infty$.
Then, the pre-Schwarzian derivative of $f_{\mu}|_{\D^*}$ satisfies 
$$
|T_{f_{\mu}|_{\D^*}}(z)| \leq 12 \sum_{n=-1}^{N} \frac{k_n r_n}{|z|^2-r_n^2} 
$$
for every $z \in \D^*$.
\end{lemma}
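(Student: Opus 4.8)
The plan is to factor $f_\mu$ as a composition of quasiconformal automorphisms of $\Chat$, each carrying the dilatation of a single annulus $A_n$, and then to exploit the additive behaviour of the pre-Schwarzian derivative under composition. For $-1\le n\le N+1$ let $F_n$ be the normalized (tangent to the identity at $\infty$) quasiconformal automorphism of $\Chat$ whose complex dilatation equals $\mu$ on $\{|\zeta|<r_n\}=\bigcup_{m\ge n}A_m$ and vanishes elsewhere, so that $F_{-1}=f_\mu$ and $F_{N+1}=\id$. Consecutive maps differ only by the piece $\mu_n$ supported on $A_n$, and the composition formula for complex dilatations recorded in Section~\ref{2} shows that $\psi_n:=F_n\circ F_{n+1}^{-1}$ is a normalized quasiconformal automorphism of $\Chat$ that is conformal off $W_n:=F_{n+1}(\overline{A_n})$ and has dilatation of modulus at most $k_n$ there. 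Since $F_n=\psi_n\circ F_{n+1}$ and $F_{N+1}=\id$, this yields the factorization $f_\mu=\psi_{-1}\circ\psi_0\circ\cdots\circ\psi_N$.

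On $\D^*$ every map here is conformal, so I would apply the chain rule $T_{g\circ f}=T_f+(T_g\circ f)\,f'$ repeatedly. Using $\psi_{n+1}\circ\cdots\circ\psi_N=F_{n+1}$ and $T_{\id}\equiv0$, this telescopes to
$$
T_{f_\mu|_{\D^*}}(z)=\sum_{n=-1}^{N}\bigl(T_{\psi_n}\circ F_{n+1}\bigr)(z)\,F_{n+1}'(z)\qquad(z\in\D^*),
$$
so that it suffices to bound each summand by $12\,k_n r_n/(|z|^2-r_n^2)$.

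For a single layer I would use the model estimate that a normalized univalent function $G$ on $\D^*$ admitting a quasiconformal extension whose dilatation has modulus at most $k$ satisfies $(|w|^2-1)\,|T_G(w)|\le Ck$ uniformly on $\D^*$ (a pre-Schwarzian bound of Becker--Pommerenke type, where the linear dependence on $k$ is essential). Applying it to $\psi_n$ after rescaling by $\rho_n:=\max_{\zeta\in\overline{A_n}}|F_{n+1}(\zeta)|$ gives $|T_{\psi_n}(F_{n+1}(z))|\le Ck_n\rho_n/(|F_{n+1}(z)|^2-\rho_n^2)$. To turn this into the target it remains to compare $\rho_n$, $|F_{n+1}(z)|$ and $|F_{n+1}'(z)|$ with $r_n$, $|z|$ and $1$. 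Since $F_{n+1}$ is conformal and normalized at $\infty$ on the exterior disc $\{|\zeta|>r_{n+1}\}$, whose closure contains $\overline{A_n}$ and $\D^*$, I would invert it into the disc model and invoke the Koebe distortion theorem (Proposition~\ref{Koebe}) to control all three quotients by absolute constants; feeding these into $\rho_n|F_{n+1}'(z)|/(|F_{n+1}(z)|^2-\rho_n^2)$ and comparing with $r_n/(|z|^2-r_n^2)$ produces the per-layer bound, after which summation over $n$ finishes the proof.

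The main obstacle is precisely this last comparison, because the constant in the assertion must be independent of the number $N$ of annuli: the distortion factors contributed by the intermediate maps $F_{n+1}$ must not accumulate, so each must be absorbed into a single universal constant rather than one depending on $F_{n+1}$. The genuinely delicate regime is when $r_{n+1}$, $r_n$ and $|z|$ are all close to $1$; there the inverted maps are evaluated near the unit circle, where the Koebe bounds degenerate, and one must check that the blow-up of the target weight $r_n/(|z|^2-r_n^2)$ compensates exactly, so that a single constant (the stated $12$) survives in the limit.
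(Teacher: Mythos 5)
Your decomposition is exactly the paper's: the maps you call $\psi_n=F_n\circ F_{n+1}^{-1}$ are the factors $f_n$ in the paper's proof, and the telescoped chain rule $T_{f_\mu|_{\D^*}}=\sum_{n=-1}^{N}(T_{\psi_n}\circ F_{n+1})\cdot F_{n+1}'$ is identical. The gap is in the per-layer estimate, and it is the one you yourself flag at the end without resolving. Modelling $\psi_n$ as a univalent function on the \emph{round} exterior disk $\{|w|>\rho_n\}$ with $\rho_n=\max_{\overline{A_n}}|F_{n+1}|$ does not work: the relevant domain of conformality of $\psi_n$ is the quasidisk $\Omega_n^*=F_{n+1}(\{|\zeta|>r_n\}\cup\{\infty\})$, and for $z\in\D^*$ with $|z|$ close to $1$ and $r_n$ close to $1$ the point $F_{n+1}(z)$ need not satisfy $|F_{n+1}(z)|>\rho_n$ at all (the omitted continuum of a normalized univalent map of an exterior disk can have diameter comparable to $r_n$, so $F_{n+1}(z)$ can have small modulus while $\rho_n$ is large), making the rescaled bound vacuous or false. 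Independently, the Koebe comparison of $\rho_n$, $|F_{n+1}(z)|$, $|F_{n+1}'(z)|$ with $r_n$, $|z|$, $1$ cannot be done with absolute constants, because $\overline{A_n}$ abuts the circle $|\zeta|=r_{n+1}$ where $F_{n+1}$ ceases to be conformal and its distortion is uncontrolled. So the ``delicate regime'' you identify is not a technicality to be checked; it is where the argument as proposed breaks.

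The paper closes this gap by never leaving the quasidisk. It uses the estimate $|T_{\psi_n}(\omega)|\le 6k_n\,\rho_{\Omega_n^*}(\omega)$ for a conformal map of an arbitrary simply connected domain $\Omega_n^*\ni\infty$ admitting a quasiconformal extension with dilatation bounded by $k_n$ (Osgood's pre-Schwarzian bound combined with the majorant principle), and then invokes the conformal invariance of the hyperbolic metric: $\rho_{\Omega_n^*}(F_{n+1}(z))\,|F_{n+1}'(z)|=\rho_{\{|\zeta|>r_n\}}(z)=2r_n/(|z|^2-r_n^2)$. This kills the distortion factors exactly rather than estimating them, yields the per-layer bound $12k_nr_n/(|z|^2-r_n^2)$ with a constant independent of $n$ and $N$, and is the one idea your proposal is missing. (Only the innermost factor $\psi_N=f_{\mu_N}$, which really is conformal on a round disk $\{|z|>r_N\}$, is handled by the direct round-disk estimate, and there it even gives the better constant $6$.)
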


\begin{proof}
First, we take a quasiconformal self-homeomorphism $f_{N}$ of 
$\C$ (namely, that 
of $\Chat$ fixing $\infty$) having the complex dilatation $\mu_{N}$, and 
consider the push-forward $\widetilde \mu_{N-1}=(f_{N})_* \mu_{N-1}$
of $\mu_{N-1}$ by $f_{N}$, which is conformal on $A_{N-1}$. 
Here, the push-forward $f_*\mu$ of $\mu \in \Bel(D)$ by a conformal homeomorphism $f$ 
of a domain $D$ is defined in general by
$$
(f_*\mu)(z)=\mu(f^{-1}(z)) \frac{\overline{(f^{-1})'(z)}}{(f^{-1})'(z)} \qquad (z \in f(D)).
$$
Next, we take a quasiconformal self-homeomorphism 
$f_{N-1}$ of $\C$ 
having the complex dilatation $\widetilde \mu_{N-1}$
and the push-forward 
$\widetilde \mu_{N-2}=(f_{N-1}\circ f_{N})_* \mu_{N-2}$.
Inductively, for each $n \geq 0$,
let $f_{n}$ be a quasiconformal self-homeomorphism of $\C$ whose complex dilatation is
$\widetilde \mu_{n}$ and let 
$$
\widetilde \mu_{n-1}=(f_{n}\circ \cdots \circ f_{N})_* \mu_{n-1}
$$ 
be the push-forward 
of $\mu_{n-1}$ by $f_{n}\circ \cdots \circ f_{N}$.
Finally, we choose a quasiconformal self-homeomorphism $f_{-1}$ of $\C$ 
with the complex dilatation $\widetilde \mu_{-1}$
so that 
$f_{-1} \circ \cdots \circ f_{N}$ coincides with $f_{\mu}$.

By the chain rule of pre-Schwarzian derivatives, we see that
\begin{align*}
&\quad \ T_{f_{\mu}|_{\D^*}}(z)\\
&=T_{f_{N}}(z)+T_{f_{N-1}}(f_N(z))f'_N(z)+ \cdots +
T_{f_{-1}}(f_{0} \circ \cdots \circ f_{N}(z))(f_{0} \circ \cdots \circ f_{N})'(z)\\
&=
T_{f_N}(z)+\sum_{n=-1}^{N-1} T_{f_n}(f_{n+1} \circ \cdots \circ f_{N}(z))(f_{n+1} \circ \cdots \circ f_{N})'(z)
\end{align*}
for every $z \in \D^*$.

Here, we use the following estimates for the pre-Schwarzian derivative.
For any conformal homeomorphism $f$ of $\D^*$ with $f(\infty)=\infty$, it was shown in Avhadiev \cite{Av}
(cf. Theorem 4.2.3 in Sugawa \cite{Sug}) that
$$
\rho_{\D^*}^{-1}(z)|T_f(z)| \leq \frac{|z|^2-1}{2}|z\, T_f(z)| \leq 3 \qquad (|z|>1).
$$
In addition, if $f$ extends to a quasiconformal self-homeomorphism of $\Chat$ 
of complex dilatation $\mu$ with $\Vert \mu \Vert_\infty \leq k$,
then the majorant principle as described in Section II.3.5 of Lehto \cite{Leh} 
yields that $|T_f(z)| \leq 3k \rho_{\D^*}(z)$. Moreover,
for any simply connected domain $\Omega^* \subset \Chat$ containing $\infty$ and for any
conformal homeomorphism $f$ of $\Omega^*$ with $f(\infty)=\infty$, we see that
$|T_f(\omega)| \leq 6\rho_{\Omega^*}(\omega)$ for $\omega \in \Omega^*$, where $\rho_{\Omega^*}(\omega)$ is the
hyperbolic density on $\Omega^*$. This stems from the chain rule of pre-Schwarzian derivatives and the invariance of a hyperbolic metric
(see Theorem 1 in Osgood \cite{Os}). Again, if this extends to a quasiconformal homeomorphism of $\Chat$ 
with $\Vert \mu \Vert_\infty \leq k$, then $|T_f(\omega)| \leq 6k\rho_{\Omega^*}(\omega)$.

The conformal homeomorphism $f_{N}$ of the disk 
$\Omega_{N}^*=\{|z|>r_{N}\} \cup \{\infty\}$ into $\Chat$ with $f_N(\infty)=\infty$ satisfies 
$$
|T_{f_{N}}(z)|
\leq \frac{6k_N r_N}{|z|^2-r_N^2}.
$$
The conformal homeomorphism $f_{n}$ of the 
quasidisk $\Omega_n^*$ into $\Chat$ with $f_n(\infty)=\infty$ for $-1 \leq n \leq N-1$, 
where $\Omega_n^*$ is the image of the disk $\{|z|>r_n\} \cup \{\infty\}$ under
$f_{n+1} \circ \cdots \circ f_{N}$, satisfies 
$$
|T_{f_n}(\omega)| \leq 6k_n \rho_{\Omega_n^*}(\omega)
$$
for every $\omega \in \Omega_n^*$ in terms of the hyperbolic density $\rho_{\Omega_n^*}(\omega)$ of $\Omega_n^*$.
Hence, by replacing $\omega$ with $f_{n+1} \circ \cdots \circ f_{N}(z)$, we obtain
\begin{align*}
&\quad |T_{f_n}(f_{n+1} \circ \cdots \circ f_{N}(z))(f_{n+1} \circ \cdots \circ f_{N})'(z)|\\
&\leq 6 k_n \rho_{\Omega_n^*}(f_{n+1} \circ \cdots \circ f_{N}(z))|(f_{n+1} \circ \cdots \circ f_{N})'(z)|
=
\frac{12k_n r_n}{|z|^2-r_n^2}. 
\end{align*}
This gives the desired inequality
$$
|T_{f_{\mu}|_{\D^*}}(z)| \leq 12 \sum_{n=-1}^{N} \frac{k_n r_n}{|z|^2-r_n^2}
$$
for every $z \in \D^*$.
\end{proof}

\medskip
\noindent
{\it Proof of Theorem \ref{basic1}.\ }
For any $\mu \in \Bel_0^\alpha(\D)$, let $\ell=\Vert \mu \Vert_{\infty,\alpha}<\infty$.
Then, 
$$
\kappa_{\mu}(t)=\sup_{1-t \leq |\zeta| < 1} |\mu(\zeta)| \leq \ell t^\alpha \qquad (0<t \leq 1).
$$
Fixing $z \in \D^*$, we will estimate $\rho_{\D^*}^{-1}(z)|T_{f_{\mu}|_{\D^*}}(z)|$
in terms of $\ell$.
In the case of $|z| \geq 2$, we can easily obtain the desired estimate. Indeed, by the inequality
$\rho_{\D^*}^{-1}(z)|T_{f_{\mu}|_{\D^*}}(z)| \leq 3 \Vert \mu \Vert_\infty$
as in the proof of Lemma \ref{decomposition} and by $\Vert \mu \Vert_{\infty} \leq \Vert \mu \Vert_{\infty,\alpha}$, we obtain
$$
\rho_{\D^*}^{-1}(z)|T_{f_{\mu}|_{\D^*}}(z)| \leq 3 \Vert \mu \Vert_{\infty,\alpha} \leq 3 \Vert \mu \Vert_{\infty,\alpha}(|z|-1)^\alpha.
$$
Hence, we may assume that $1<|z|<2$. Let $\tau=|z|-1 \in (0,1)$.

We choose $t_0=\tau$ and inductively define a sequence $\{t_n\}_{n \geq 1}$ of positive numbers by
a recurrence relation
$$
\frac{\tau}{\tau+t_{n-1}}\cdot \ell {t_n}^\alpha=\lambda^n \cdot \ell \tau^\alpha
$$
for some constant $\lambda$ with $0<\lambda<1$. If we set $s_n=t_n/\tau$, this is equivalent to
$$
\left(\frac{1}{1+s_{n-1}} \right) {s_n}^\alpha=\lambda^n
$$
with the initial condition $s_0=1$. Then, by Lemma \ref{recurrence},
we can find the constant $\lambda=\lambda(\alpha)$ so that the sequence $\{s_n\}$, and hence $\{t_n\}$
are increasing and diverge to $+\infty$.
In particular, there is the smallest non-negative integer $N \geq 0$ such that 
$t_{N+1} \geq 1$.

By using the positive numbers $\{t_n\}_{n=0}^N$, 
we set $r_n=1-t_n$. We also set $r_{-1}=1$ 
and $r_{N+1}=0$.
Then, as in Lemma \ref{decomposition}, we divide $\D$ into
the annuli (or the disk)
$$
A_n=\{\zeta \in \D \mid r_n > |\zeta| \geq r_{n+1}\} \qquad (n=-1,0,\ldots ,N)
$$ 
and define $k_n=\Vert \mu_n \Vert_\infty$
for $\mu_n=\mu \cdot 1_{A_n}$.  
Because $\kappa_{\mu}(t) \leq \ell t^\alpha$,
we see that $k_n \leq \ell {t_{n+1}}^\alpha$.
We note that for $n=N$, this is valid as $\Vert \mu \Vert_\infty \leq \ell \leq \ell {t_{n+1}}^\alpha$.
Now, the application of Lemma \ref{decomposition} yields 
$$
\rho_{\D^*}^{-1}(z)|T_{f_{\mu}|_{\D^*}}(z)| \leq 6 (|z|^2-1) \sum_{n=-1}^{N} \frac{k_n r_n}{|z|^2-r_n^2}
\leq
6 \sum_{n=-1}^{N} \frac{\tau}{\tau+t_n}\cdot \ell {t_{n+1}}^\alpha.
$$
Here, the recurrence relation for $\{t_n\}$ shows that 
the last sum is taken for $\lambda^{n+1} \cdot \ell \tau^\alpha$.
Thus, 
$$
\rho_{\D^*}^{-1}(z)|T_{f_{\mu}|_{\D^*}}(z)| \leq \frac{6 \ell}{1-\lambda} \tau^\alpha,
$$
where $\lambda$ depends only on $\alpha$.
By taking $C=6/(1-\lambda)$, we obtain the desired inequality.
\qed
\medskip

Next, we consider the relation between $T_f$ and $S_f$ for a conformal homeomorphism $f$ of $\D^*$.
It is known that there is some absolute constant $A>0$ such that
$$
\rho_{\D^*}^{-2}(z)|S_f(z)| \leq A \rho_{\D^*}^{-1}(z)|z\,T_f(z)| \qquad (z \in \D^*).
$$
(see Lemma 6.1 in Becker \cite{Bec1}). This in particular implies the following:

\begin{proposition}\label{T->S}
If $\beta_\mu(t)=O(t^\alpha)$ then $\sigma_\mu(t)=O(t^\alpha)$ $(t \to 0)$.
\end{proposition}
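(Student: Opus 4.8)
The plan is to obtain $\sigma_\mu$ from $\beta_\mu$ by a direct substitution into the pointwise inequality just quoted from Becker, namely
$$
\rho_{\D^*}^{-2}(z)|S_f(z)| \leq A \rho_{\D^*}^{-1}(z)|z\,T_f(z)| \qquad (z \in \D^*)
$$
applied to $f=f_\mu|_{\D^*}$. The point is that, once the hyperbolic densities are written out, this estimate already compares precisely the two weighted quantities $(|z|-1)^2|S_f(z)|$ and $(|z|-1)|T_f(z)|$ whose suprema over annuli define $\sigma_\mu$ and $\beta_\mu$.

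First I would clear the densities using $\rho_{\D^*}(z)=2/(|z|^2-1)$, so that $\rho_{\D^*}^{-2}(z)=(|z|^2-1)^2/4$ and $\rho_{\D^*}^{-1}(z)=(|z|^2-1)/2$. The inequality then reduces to $(|z|^2-1)|S_f(z)| \leq 2A|z|\,|T_f(z)|$. Factoring $|z|^2-1=(|z|-1)(|z|+1)$ and dividing, I get
$$
(|z|-1)^2|S_f(z)| \leq \frac{2A|z|}{|z|+1}\,(|z|-1)|T_f(z)|.
$$
Since $|z|/(|z|+1)<1$ for every $z \in \D^*$, the coefficient in front is bounded by $2A$ uniformly, giving the clean pointwise bound $(|z|-1)^2|S_f(z)| \leq 2A\,(|z|-1)|T_f(z)|$.

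Taking the supremum of both sides over the annulus $1<|z|\leq 1+t$ then yields $\sigma_\mu(t) \leq 2A\,\beta_\mu(t)$ for every $t>0$, whence the hypothesis $\beta_\mu(t)=O(t^\alpha)$ immediately forces $\sigma_\mu(t)=O(t^\alpha)$ as $t \to 0$. In this case there is essentially no obstacle: the genuine analytic content — controlling the Schwarzian derivative pointwise by the pre-Schwarzian derivative — is entirely supplied by Becker's lemma, and all that remains is the bookkeeping of the density factors and the elementary observation that $|z|/(|z|+1)$ is bounded, after which the passage from the pointwise estimate to the stated decay order is just a supremum over the nested annuli.
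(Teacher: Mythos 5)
Your proposal is correct and is exactly the argument the paper intends: the paper quotes Becker's pointwise bound $\rho_{\D^*}^{-2}(z)|S_f(z)| \leq A \rho_{\D^*}^{-1}(z)|z\,T_f(z)|$ and states that the proposition follows "in particular," and your computation with the densities and the factor $|z|/(|z|+1)<1$ is precisely the bookkeeping that justifies $\sigma_\mu(t)\leq 2A\,\beta_\mu(t)$. No gaps.
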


\begin{remark}
Lemmata \ref{recurrence} and \ref{decomposition} can be easily modified
so that they are suitable for estimation of Schwarzian derivatives. Hence, inequalities 
$$
\rho_{\D^*}^{-2}(z)|S_{f_\mu|_{\D^*}}(z)| \leq C' \Vert \mu \Vert_{\infty,\alpha} (|z|-1)^\alpha; \quad
\sigma_\mu(t) \leq C' \Vert \mu \Vert_{\infty,\alpha} \frac{4t^\alpha}{(t+2)^2}
$$
for some $C'=C'(\alpha)>0$
can also be derived directly
from these modifications in the same way as in the proof of Theorem \ref{basic1}. The condition 
$\sigma_\mu(t)=O(t^\alpha)$ $(t \to 0)$ is equivalent to 
$\sup_{z \in \D^*} \rho_{\D^*}^{-2+\alpha}(z)|S_{f_\mu|_{\D^*}}(z)|<\infty$.
\end{remark}

Finally, we will show that $\sigma_\mu(t)=O(t^\alpha)$ implies that $\kappa_{\mu'}(t)=O(t^\alpha)$ $(t \to 0)$
for some $\mu' \in \Bel(\D)$ with $\pi(\mu)=\pi(\mu')$.
This is a consequence of
the next lemma, which can be found in Theorem 5.4 of Becker \cite{Bec1}. We note that the condition $\pi(\mu)=\pi(\mu')$ 
is equivalent to $f_\mu|_{\D^*}=f_{\mu'}|_{\D^*}$ for $\mu, \mu' \in \Bel(\D)$.

\begin{lemma}\label{localext}
Let $f$ be a conformal homeo\-morphism of $\D^*$ having a quasiconformal extension to $\Chat$
such that $\varphi=S_{f}$ belongs to $B_0(\D^*)$.
We set
$$
F(z)=f(z^*)-\frac{(z^*-z)f'(z^*)}{1+(z^*-z)f''(z^*)/(2f'(z^*))}
$$
for $z \in \D$, where $z^*=1/\bar z$ is the reflection of $z$ with respect to $\S1$.
Then, there is some $t>0$ such that $f$ extends to a quasiconformal self-homeomorphism of $\Chat$
that coincides with $F$ on the annulus $\{1-t<|z|<1\}$ having the complex dilatation 
$$
\mu_F(z)=\frac{{\bar \partial}F(z)}{\partial F(z)}=-2 \rho^{-2}_{\D^*}(z^*)(zz^*)^2 \varphi(z^*).
$$
\end{lemma}

Theorem \ref{basic1}, Proposition \ref{T->S}, and Lemma \ref{localext} conclude
the equivalence of all the conditions above. 

\begin{theorem}\label{equivalence1}
The following conditions are equivalent for $\mu \in \Bel(\D)$ and $\alpha \in (0,1)$:
\begin{enumerate}
\item 
$\kappa_{\mu'}(t)=O(t^\alpha)$ $(t \to 0)$ for some $\mu' \in \Bel(\D)$ with $\pi(\mu)=\pi(\mu')$;
\item
$\beta_\mu(t)=O(t^\alpha)$ $(t \to 0)$;
\item
$\sigma_\mu(t)=O(t^\alpha)$ $(t \to 0)$. 
\end{enumerate}
\end{theorem}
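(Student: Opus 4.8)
The plan is to establish the cycle of implications $(1)\Rightarrow(2)\Rightarrow(3)\Rightarrow(1)$, drawing respectively on Theorem \ref{basic1}, Proposition \ref{T->S} and Lemma \ref{localext}; all the analytic weight has already been placed in these three results, so the task here is essentially to assemble them. The crucial preliminary observation is that $\beta_\mu(t)$ and $\sigma_\mu(t)$ are built solely from the pre-Schwarzian and Schwarzian derivatives of the restriction $f_\mu|_{\D^*}$, and that this restriction depends only on the class $\pi(\mu)$: since $\pi(\mu)=\pi(\mu')$ is equivalent to $f_\mu|_{\D^*}=f_{\mu'}|_{\D^*}$, we get $\beta_\mu=\beta_{\mu'}$ and $\sigma_\mu=\sigma_{\mu'}$ whenever $\pi(\mu)=\pi(\mu')$. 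Thus (2) and (3) are genuinely conditions on the Teich\-m\"ul\-ler class, which is exactly what makes their equivalence with (1) sensible.

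For $(1)\Rightarrow(2)$, I would select a representative $\mu'$ with $\pi(\mu')=\pi(\mu)$ and $\ell:=\Vert\mu'\Vert_{\infty,\alpha}<\infty$, so that $\mu'\in\Bel^\alpha_0(\D)$. Applying Theorem \ref{basic1} to $\mu'$ yields $\beta_{\mu'}(t)\leq C(\alpha)\,\ell\,\tfrac{2t^\alpha}{t+2}$, and by the invariance noted above $\beta_\mu(t)=\beta_{\mu'}(t)=O(t^\alpha)$ as $t\to 0$. The implication $(2)\Rightarrow(3)$ is precisely Proposition \ref{T->S}: the pointwise estimate $\rho_{\D^*}^{-2}|S_f|\leq A\,\rho_{\D^*}^{-1}|z\,T_f|$ converts an $O(t^\alpha)$ bound on $\beta_\mu$ into one on $\sigma_\mu$. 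The reformulation $\sup_{z}\rho_{\D^*}^{-2+\alpha}(z)|S_{f_\mu|_{\D^*}}(z)|<\infty$ is just $\sigma_\mu(t)=O(t^\alpha)$ read off near the boundary, the bound away from the boundary being automatic from the Nehari--Kraus estimate.

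The remaining implication $(3)\Rightarrow(1)$ is where Lemma \ref{localext} does the work. Writing $\varphi=S_{f_\mu|_{\D^*}}$, condition (3) states $\Vert\varphi\Vert_{\infty,\alpha}<\infty$, and since $\alpha>0$ this forces $\varphi\in B_0(\D^*)$; moreover $f_\mu|_{\D^*}$ is a conformal homeomorphism of $\D^*$ admitting a quasiconformal extension to $\Chat$, so the hypotheses of Lemma \ref{localext} are met. The lemma then produces some $\mu'\in\Bel^\alpha_0(\D)$ with $S_{f_{\mu'}|_{\D^*}}=\varphi=S_{f_\mu|_{\D^*}}$, that is $\Phi(\mu')=\Phi(\mu)$, whence $\pi(\mu')=\pi(\mu)$ by the characterization of the Teich\-m\"ul\-ler projection through the Bers projection. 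Since $\Vert\mu'\Vert_{\infty,\alpha}<\infty$, this is exactly (1).

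I do not anticipate a genuine obstacle, as the substantive estimates are quarantined in the cited results. The only points demanding care are organizational: confirming that $\beta_\mu$ and $\sigma_\mu$ are invariants of $\pi(\mu)$ before invoking Theorem \ref{basic1} on a convenient representative, and verifying in $(3)\Rightarrow(1)$ that finiteness of $\Vert\varphi\Vert_{\infty,\alpha}$ indeed places $\varphi$ in $B_0(\D^*)$ so that Lemma \ref{localext} applies, and that equality of Schwarzian derivatives returns the same Teich\-m\"ul\-ler class.
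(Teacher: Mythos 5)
Your proposal is correct and follows exactly the paper's own (very terse) argument: the cycle $(1)\Rightarrow(2)\Rightarrow(3)\Rightarrow(1)$ via Theorem \ref{basic1}, Proposition \ref{T->S} and Lemma \ref{localext}, together with the observation that $\pi(\mu)=\pi(\mu')$ forces $f_\mu|_{\D^*}=f_{\mu'}|_{\D^*}$ so that $\beta_\mu$ and $\sigma_\mu$ depend only on the Teich\-m\"ul\-ler class. Nothing further is needed.
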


The above results can also be proved when we exchange the role of $\D$ and $\D^*$.
We will briefly mention this fact.
For any Beltrami coefficient $\mu \in \Bel(\D)$,
we define its reflection by
$$
\mu^*(z)=\overline{\mu(z^*)}(zz^*)^2 \in \Bel(\D^*).
$$
This coincides with the complex dilatation of the reflection of
$f^\mu:\D \to \D$ with respect to $\S1$. If $\mu \in \Bel_0^\alpha(\D)$ and $\Vert \mu \Vert_{\infty,\alpha}=\ell<\infty$,
then $\mu^*$ satisfies
\begin{align*}
|\mu^*(z)|=|\mu(z^*)| \leq \ell \, \left (\frac{|z|^2-1}{2|z|^2} \right)^\alpha \leq \ell \, (|z|-1)^\alpha \qquad (z \in \D^*);\\
\kappa_{\mu^*}(t)=\sup_{1 < |z| \leq 1+t} |\mu^*(z)| \leq \ell t^\alpha \qquad (0<t <\infty).
\end{align*}

The function $\beta$ for 
$\mu^* \in \Bel(\D^*)$ is given similarly. 
We extend $\mu^*$ to $\Chat$
by setting $\mu^*(\zeta) \equiv 0$ for $\zeta \in \D$ and take 
a quasiconformal homeomorphism $f_{\mu^*}: \Chat \to \Chat$
having the complex dilatation $\mu^*$ with $f_{\mu^*}(\infty)=\infty$. Then,
for the pre-Schwarzian derivative
$T_{f_{\mu^*}|_{\D}}(\zeta)=f_{\mu^*}''(\zeta)/f_{\mu^*}'(\zeta)$
on $\D$, we define
$$
\bar \beta_{\mu^*}(t)=\max_{|\zeta|=1-t}(1-|\zeta|)|T_{f_{\mu^*}|_{\D}}(\zeta)| \qquad (0<t \leq 1).
$$

We can modify Lemma \ref{decomposition} appropriately by using the corresponding estimates of pre-Schwarzian derivatives
on $\D$ and any simply connected domain $\Omega \subset \C$:
$$
|T_f(\zeta)| \leq 3\rho_{\D}(\zeta) \quad (\zeta \in \D)\ ; \quad |T_f(\omega)| \leq 4\rho_{\Omega}(\omega) \quad (\omega \in \Omega).
$$
Concerning the relation between $T_f$ and $S_f$ for a conformal homeomorphism $f$ of $\D$,
there is some absolute constant $A'>0$ such that 
$$
\rho_{\D}^{-2}(\zeta)|S_f(\zeta)| \leq A' \rho_{\D}^{-1}(\zeta)|T_f(\zeta)| \qquad (\zeta \in \D).
$$
(see pp.117--119 of \cite{Bec2} and Sections 4.2 and 5.3 of \cite{Sug}).
Thus, the statement corresponding to Proposition \ref{T->S} holds true also in this case.
Moreover, the interior version of Lemma \ref{localext} is given in Theorem 3 of \cite{Bec2}.

Therefore, the statements that correspond to Theorems \ref{basic1} and \ref{equivalence1} are also valid in this case;
in particular, we record the following claim as a corollary for later use.

\begin{corollary}\label{interior}
For every $\alpha \in (0,1)$, there is a constant $C'=C'(\alpha)>0$ depending only on $\alpha$
such that
$\bar \beta_{\mu^*}(t) \leq C' \Vert \mu \Vert_{\infty,\alpha} t^\alpha$ 
for every $\mu \in \Bel^\alpha_0(\D)$ and for every $t \in (0,1]$.
\end{corollary}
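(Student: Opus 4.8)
The plan is to re-run the proof of Theorem \ref{basic1} under the inversion $z \mapsto 1/\bar z$, which interchanges $\D$ and $\D^*$ and carries $\mu$ to its reflection $\mu^*$. Concretely, I would fix $\zeta \in \D$, set $\tau = 1-|\zeta|$, and establish the pointwise estimate $(1-|\zeta|)|T_{f_{\mu^*}|_{\D}}(\zeta)| \leq C'\ell\,(1-|\zeta|)^\alpha$ with $\ell = \Vert \mu \Vert_{\infty,\alpha}$; the bound $\bar\beta_{\mu^*}(t) \leq C'\ell\, t^\alpha$ then follows at once by taking the supremum over $1-t \leq |\zeta| < 1$, since $\tau \leq t$ forces $\tau^\alpha \leq t^\alpha$. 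The input on the decay of the support is the reflected estimate already recorded above, $\kappa_{\mu^*}(t) \leq \ell\, t^\alpha$.

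The heart of the argument is the mirror image of Lemma \ref{decomposition}. Instead of annuli in $\D$ accumulating at $\S1$ from inside, I would decompose $\D^*$ into annuli $A_n^* = \{R_n \leq |z| < R_{n+1}\}$ with $R_{-1}=1$, $R_n = 1+t_n$ for $0 \leq n \leq N$, and the unbounded outermost region $A_N^* = \{R_N \leq |z| < \infty\}$ reaching $\infty$, where $\{t_n\}$ is the increasing divergent sequence produced by Lemma \ref{recurrence} from $s_n = t_n/\tau$ with $s_0=1$, and $N$ is the least index with $t_{N+1}\geq 1$. Writing $f_{\mu^*} = f_{-1}\circ\cdots\circ f_N$ as a composition of quasiconformal maps whose successive dilatations are supported on these annuli (the piece nearest $\infty$ applied first), each partial composition $g_{n+1} = f_{n+1}\circ\cdots\circ f_N$ is conformal on the disk $\{|z| < R_n\} \supset \D$, and the chain rule expresses $T_{f_{\mu^*}|_{\D}}(\zeta)$ as a sum of the individual pre-Schwarzian terms. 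Applying the interior bounds $|T_f(\zeta)| \leq 3\rho_{\D}(\zeta)$ and $|T_f(\omega)| \leq 4\rho_{\Omega}(\omega)$ stated above, together with the conformal invariance $\rho_{\Omega_n}(g_{n+1}(\zeta))|g_{n+1}'(\zeta)| = \rho_{\{|z|<R_n\}}(\zeta) = 2R_n/(R_n^2-|\zeta|^2)$, I expect the modified decomposition estimate
$$
|T_{f_{\mu^*}|_{\D}}(\zeta)| \leq 8\sum_{n=-1}^N \frac{k_n R_n}{R_n^2-|\zeta|^2}, \qquad k_n = \Vert \mu^*\cdot 1_{A_n^*}\Vert_\infty .
$$

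To close the estimate I would proceed exactly as in the proof of Theorem \ref{basic1}. From $R_n^2-|\zeta|^2 = (t_n+\tau)(R_n+|\zeta|) \geq (t_n+\tau)R_n$ I get $R_n/(R_n^2-|\zeta|^2) \leq 1/(t_n+\tau)$, so that $(1-|\zeta|)|T_{f_{\mu^*}|_{\D}}(\zeta)| \leq 8\sum_n \frac{\tau}{\tau+t_n}k_n$. Using $k_n \leq \ell\, t_{n+1}^\alpha$ (the annulus $A_n^*$ lies within distance $t_{n+1}$ of $\S1$) and the recurrence $\frac{\tau}{\tau+t_n}\ell\, t_{n+1}^\alpha = \lambda^{n+1}\ell\,\tau^\alpha$ supplied by Lemma \ref{recurrence}, the sum collapses to a geometric series $\ell\,\tau^\alpha\sum_{n\geq -1}\lambda^{n+1} \leq \ell\,\tau^\alpha/(1-\lambda)$, yielding the pointwise bound with $C' = 8/(1-\lambda)$. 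The one step that requires genuine care rather than bookkeeping is the geometric set-up of the reflected decomposition: I must check that the unbounded outermost annulus $A_N^*$ is absorbed by the crude bound $k_N \leq \Vert\mu^*\Vert_\infty = \Vert\mu\Vert_\infty \leq \ell \leq \ell\, t_{N+1}^\alpha$ (valid once $t_{N+1}\geq 1$), and that the successive push-forwards genuinely keep each $g_{n+1}$ conformal on $\{|z|<R_n\}$ so that the hyperbolic-density transformation above is legitimate; everything else is the inversion-symmetric image of the computation already carried out for Theorem \ref{basic1}.
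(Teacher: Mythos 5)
Your proposal is correct and is essentially the paper's own argument: the paper proves Corollary \ref{interior} by exactly this mirroring of Lemma \ref{decomposition} and Theorem \ref{basic1} under $z\mapsto 1/\bar z$, using the interior pre-Schwarzian bounds $|T_f(\zeta)|\leq 3\rho_{\D}(\zeta)$ and $|T_f(\omega)|\leq 4\rho_{\Omega}(\omega)$ together with the reflected decay estimate $\kappa_{\mu^*}(t)\leq \ell t^\alpha$ and the same recurrence from Lemma \ref{recurrence}. The details you supply (the annuli $A_n^*$ accumulating at $\S1$ from outside, the unbounded outermost piece absorbed via $t_{N+1}\geq 1$, the conformality of each partial composition on $\{|z|<R_n\}$, and the resulting constant $C'=8/(1-\lambda)$) are precisely the modifications the paper leaves to the reader.
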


\section{H\"older continuity of derivatives and quasisymmetry quotients}\label{5}

We define a class of orientation-preserving diffeomorphisms of the circle with H\"older continuous derivatives,
which is of importance in our theory of Teichm\"uller spaces.
In this section, we
investigate the topology of the space of such circle diffeomorphisms.
In particular, we relate this topology to the quasisymmetry quotients and the dilatations of their quasiconformal extensions.

\begin{definition}
An orientation-preserving diffeomorphism $g:\S1 \to \S1$ belongs to the class
$\Diff_+^{1+\alpha}(\S1)$ for exponent $\alpha \in (0,1)$ if its derivative is
$\alpha$-H\"older continuous. This means that the lift $\widetilde g:\R \to \R$ of $g$ under the universal cover $\R \to \S1$ satisfies
$$
|\widetilde g'(x)-\widetilde g'(y)| \leq c|x-y|^\alpha \qquad (x,\, y \in \R)
$$
for some $c \geq 0$.
\end{definition}

We provide the right uniform topology for $\Diff_+^{1+\alpha}(\S1)$. This is induced by
the $C^{1+\alpha}$-modulus $p_{1+\alpha}$, which measures the difference between
an element $g \in \Diff_+^{1+\alpha}(\S1)$ and the identity as follows:
$$
p_{1+\alpha}(g)=\sup_{\xi \in \S1} |g(\xi)-\xi|
+\sup_{0 \leq x <1} |\widetilde g'(x)-1|
+c_\alpha(g),
$$
where 
$$
c_\alpha(g)=\sup_{0<|x-y| \leq 1/2} \frac{|\widetilde g'(x)-\widetilde g'(y)|}{|x-y|^\alpha}.
$$
Then, $g_n$ converge to $g$ in $\Diff_+^{1+\alpha}(\S1)$ by definition if 
$p_{1+\alpha}(g_n \circ g^{-1}) \to 0$ as $n \to \infty$.

\begin{remark}
The right uniform topology on $\Diff_+^{1+\alpha}(\S1)$ as above is different from the $C^{1+\alpha}$-topology
given in Herman \cite{H}.
\end{remark}

We first verify that the neighborhood base at $\id \in \Diff_+^{1+\alpha}(\S1)$ is compatible with
the group structure. In other words, $\Diff_+^{1+\alpha}(\S1)$ is a partial topological group
in the sense of Gardiner and Sullivan \cite{GS}.

\begin{proposition}\label{partial}
The $C^{1+\alpha}$-modulus $p_{1+\alpha}$ satisfies the following:
\begin{enumerate}
\item
If $p_{1+\alpha}(g_n) \to 0$ and $p_{1+\alpha}(h_n) \to 0$ as $n \to \infty$ 
then $p_{1+\alpha}(g_n \circ h_n) \to 0$; 
\item
If $p_{1+\alpha}(g_n) \to 0$ as $n \to \infty$ then
$p_{1+\alpha}(g_n^{-1}) \to 0$.
\end{enumerate}
\end{proposition}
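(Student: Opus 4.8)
The plan is to split the $C^{1+\alpha}$ constant into its three constituent pieces and track each one separately through composition and inversion. Writing $G=\widetilde g$ for the lift, set $A(g)=\sup_{0\le x<1}|\widetilde g(x)-\widetilde g(0)-x|$ and $B(g)=\sup_{0\le x<1}|\widetilde g'(x)-1|$, so that $p_{1+\alpha}(g)=A(g)+B(g)+c_\alpha(g)$. Two periodicity facts will be used throughout: the derivative $\widetilde g'$ is $1$-periodic, so $B(g)$ and $c_\alpha(g)$ actually control $\widetilde g'$ on all of $\R$; and the displacement $\psi_g(x)=\widetilde g(x)-x$ is $1$-periodic, so $\sup_{x\in\R}|\psi_g(x)-\psi_g(0)|=A(g)$ and hence $|(\widetilde g(x)-\widetilde g(0))-x|\le A(g)$ holds for \emph{every} $x\in\R$. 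I would also record one elementary auxiliary bound: by splitting an interval at its midpoint and using periodicity, the H\"older estimate extends from $|x-y|\le 1/2$ to $|x-y|\le 1$ at the cost of a universal factor, namely $|\widetilde g'(x)-\widetilde g'(y)|\le 2\,c_\alpha(g)|x-y|^\alpha$ whenever $|x-y|\le 1$. The whole proposition then reduces to bounding $A$, $B$, $c_\alpha$ of $g_n\circ h_n$ (resp.\ $g_n^{-1}$) by expressions in $A,B,c_\alpha$ of $g_n,h_n$ that vanish in the limit.

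For assertion (1) I would use $\widetilde{g\circ h}=\widetilde g\circ\widetilde h$ together with the chain rule $(\widetilde g\circ\widetilde h)'(x)=\widetilde g'(\widetilde h(x))\,\widetilde h'(x)$. Splitting $\widetilde g'(\widetilde h(x))\widetilde h'(x)-1=(\widetilde g'(\widetilde h(x))-1)\widetilde h'(x)+(\widetilde h'(x)-1)$ yields $B(g\circ h)\le B(g)(1+B(h))+B(h)$. For the H\"older piece I would expand the difference $(\widetilde g\circ\widetilde h)'(x)-(\widetilde g\circ\widetilde h)'(y)$ as $\widetilde g'(\widetilde h(x))[\widetilde h'(x)-\widetilde h'(y)]+[\widetilde g'(\widetilde h(x))-\widetilde g'(\widetilde h(y))]\widetilde h'(y)$ and estimate each factor; the extended H\"older bound is needed here because $|\widetilde h(x)-\widetilde h(y)|\le(1+B(h))|x-y|$ may exceed $1/2$, though it stays below $1$ once $B(h)<1$, giving $c_\alpha(g\circ h)\le(1+B(g))c_\alpha(h)+2(1+B(h))^{1+\alpha}c_\alpha(g)$. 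For displacement, writing $\widetilde g(\widetilde h(x))-\widetilde g(\widetilde h(0))=(\widetilde h(x)-\widetilde h(0))+\int_{\widetilde h(0)}^{\widetilde h(x)}(\widetilde g'(s)-1)\,ds$ gives $A(g\circ h)\le A(h)+B(g)(1+A(h))$. Each right-hand side tends to $0$.

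For assertion (2) I would differentiate $\widetilde g(\widetilde{g^{-1}}(y))=y$ to obtain $(\widetilde{g^{-1}})'(y)=1/\widetilde g'(\widetilde{g^{-1}}(y))$; once $B(g)<1/2$ the denominator lies in $[1-B(g),1+B(g)]$, so $B(g^{-1})\le B(g)/(1-B(g))$, and after forming the difference quotient and invoking both the extended H\"older bound and $|\widetilde{g^{-1}}(y_1)-\widetilde{g^{-1}}(y_2)|\le|y_1-y_2|/(1-B(g))$ one gets $c_\alpha(g^{-1})\le 2\,c_\alpha(g)/(1-B(g))^{2+\alpha}$. The displacement term is cleanest: applying the identity $|(\widetilde g(x)-\widetilde g(0))-x|\le A(g)$ at the two preimages $x=\widetilde{g^{-1}}(y)$ and $x=\widetilde{g^{-1}}(0)$ and subtracting gives $|\widetilde{g^{-1}}(y)-\widetilde{g^{-1}}(0)-y|\le 2A(g)$, hence $A(g^{-1})\le 2A(g)$. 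All three bounds vanish with $p_{1+\alpha}(g_n)$.

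The only genuine subtlety, and the step I expect to require the most care, is the threshold $1/2$ in the definition of $c_\alpha$: both composition and inversion can stretch a pair of points with $|x-y|\le 1/2$ into a pair at distance slightly larger than $1/2$, so the raw H\"older estimate cannot be applied verbatim. This is exactly what the midpoint-splitting extension (valid up to distance $1$ by periodicity of $\widetilde g'$) is designed to resolve, and it applies for all large $n$ since $B(g_n),B(h_n)\to 0$ keep the stretching factors below $1$. Everything else is bookkeeping of products of quantities converging to $0$ or to $1$, so no uniform-in-$n$ difficulty arises and the conclusions $p_{1+\alpha}(g_n\circ h_n)\to 0$ and $p_{1+\alpha}(g_n^{-1})\to 0$ follow.
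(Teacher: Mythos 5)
Your proof is correct and follows essentially the same route as the paper: the chain rule $(\widetilde g\circ\widetilde h)'=\widetilde g'(\widetilde h)\,\widetilde h'$ and $(\widetilde{g^{-1}})'=1/\widetilde g'(\widetilde{g^{-1}})$ together with the standard product-rule splitting of the H\"older difference quotient, plus the observation that the derivatives converge uniformly to $1$. Your only addition is the midpoint-splitting extension of the H\"older bound to pairs at distance up to $1$, which handles the $\sup_{|x-y|\le 1/2}$ threshold in the definition of $c_\alpha$ that the paper's estimate $c_\alpha(g_n)|h_n(x)-h_n(y)|^\alpha$ silently glosses over; this is a legitimate (and welcome) refinement rather than a different argument.
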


\begin{proof}
(1) It is obvious that $g_n \circ h_n \to \id$ and $(\widetilde{g_n \circ h_n})'(x) =\widetilde{g_n}'(\widetilde{h_n}(x))\widetilde{h_n}'(x)\to 1$ uniformly. 
Concerning the convergence of $c_\alpha$,
we have
\begin{align*}
&\quad \frac{|(\widetilde{g_n \circ h_n})'(x)-(\widetilde{g_n \circ h_n})'(y)|}{|x-y|^{\alpha}}\\
&\leq
\frac{|\widetilde{g_n}'(\widetilde{h_n}(x))\widetilde{h_n}'(x)-\widetilde{g_n}'(\widetilde{h_n}(y))\widetilde{h_n}'(x)|}{|x-y|^{\alpha}}
+\frac{|\widetilde{g_n}'(\widetilde{h_n}(y))\widetilde{h_n}'(x)-\widetilde{g_n}'(\widetilde{h_n}(y))\widetilde{h_n}'(y)|}{|x-y|^{\alpha}}\\
&\leq
\frac{c_{\alpha}(g_n)|\widetilde{h_n}(x)-\widetilde{h_n}(y)|^{\alpha}|\widetilde{h_n}'(x)|}{|x-y|^{\alpha}}
+|\widetilde{g_n}'(\widetilde{h_n}(y))|c_{\alpha}(h_n).
\end{align*}
As $c_\alpha(g_n), c_\alpha(h_n) \to 0$ and $\widetilde{g_n}'(x), \widetilde{h_n}'(x) \to 1$ uniformly, we see that
$c_\alpha(g_n \circ h_n) \to 0$ as $n \to \infty$.

(2) It is obvious that $g_n^{-1} \to \id$ and $(\widetilde{g_n^{-1}})'(x) =1/(\widetilde{g_n}'(\widetilde{g_n^{-1}}(x)) \to 1$ uniformly. 
Concerning the convergence of $c_\alpha$,
we have
\begin{align*}
\frac{|(\widetilde{g_n^{-1}})'(x)-(\widetilde{g_n^{-1}})'(y)|}{|x-y|^{\alpha}}
&=
\frac{|\widetilde{g_n}'(\widetilde{g_n^{-1}}(x))-\widetilde{g_n}'(\widetilde{g_n^{-1}}(y))|}{|x-y|^{\alpha}|\widetilde{g_n}'(\widetilde{g_n^{-1}}(x))||\widetilde{g_n}'(\widetilde{g_n^{-1}}(y))|}\\
&\leq
\frac{c_{\alpha}(g_n)|\widetilde{g_n^{-1}}(x)-\widetilde{g_n^{-1}}(y)|^{\alpha}}{|x-y|^{\alpha}|\widetilde{g_n}'(\widetilde{g_n^{-1}}(x))||\widetilde{g_n}'(\widetilde{g_n^{-1}}(y))|}.
\end{align*}
As $c_\alpha(g_n) \to 0$ and $\widetilde{g_n}'(x), (\widetilde{g_n^{-1}})'(x) \to 1$ uniformly, we see that
$c_\alpha(g_n^{-1}) \to 0$ as $n \to \infty$.
\end{proof}

In fact, we see more: $\Diff_+^{1+\alpha}(\S1)$ is a topological group.

\begin{proposition}\label{top}
With respect to the right uniform topology, $\Diff_+^{1+\alpha}(\S1)$ is a topological group.
\end{proposition}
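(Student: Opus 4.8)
The plan is to upgrade the partial topological group structure of Proposition \ref{partial} to a genuine topological group structure. Since the right uniform topology is defined so that every right translation $h\mapsto h\circ a$ is a homeomorphism, what remains is the joint continuity of composition and the continuity of inversion at an arbitrary element, and I would reduce both to a single assertion about conjugation at the identity. If $g_n\to g$ and $h_n\to h$, write $\alpha_n=g_n\circ g^{-1}$ and $\beta_n=h_n\circ h^{-1}$, so $p_{1+\alpha}(\alpha_n)\to0$ and $p_{1+\alpha}(\beta_n)\to0$. The chain-rule identity
$$
(g_n\circ h_n)\circ(g\circ h)^{-1}=\alpha_n\circ\bigl(g\circ\beta_n\circ g^{-1}\bigr),
$$
together with Proposition \ref{partial}(1), reduces the continuity of composition to showing $p_{1+\alpha}(g\circ\beta_n\circ g^{-1})\to0$ for each fixed $g$. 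A parallel computation, using $g_n^{-1}\circ g=g^{-1}\circ\alpha_n^{-1}\circ g$ and Proposition \ref{partial}(2), reduces the continuity of inversion to the same statement. Thus the proposition comes down to the claim that each inner automorphism $\gamma\mapsto g\circ\gamma\circ g^{-1}$ is continuous at the identity.

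To establish this, set $\phi_n=g\circ\gamma_n\circ g^{-1}$ with $p_{1+\alpha}(\gamma_n)\to0$, and estimate the three contributions to $p_{1+\alpha}(\phi_n)$ separately. By the chain rule,
$$
\widetilde\phi_n'(x)=\widetilde g'(\widetilde\gamma_n(\widetilde g^{-1}(x)))\,\widetilde\gamma_n'(\widetilde g^{-1}(x))\,(\widetilde g^{-1})'(x).
$$
Since $g$ and $g^{-1}$ lie in $\Diff^{1+\alpha}(\S1)$, the factors $\widetilde g'$ and $(\widetilde g^{-1})'$ are bounded, bounded away from $0$, and $\alpha$-H\"older, while $\widetilde\gamma_n\to\id$ and $\widetilde\gamma_n'\to1$ uniformly. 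From these facts the $C^0$ and $C^1$ parts of $p_{1+\alpha}(\phi_n)$ tend to $0$ by the uniform continuity of $\widetilde g'$. The remaining, and essential, task is to show that the H\"older seminorm $c_\alpha(\phi_n)$ tends to $0$.

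For the H\"older part I would use the splitting
$$
\widetilde\phi_n'(x)-1=(\widetilde g^{-1})'(x)\Bigl[\widetilde g'(\widetilde\gamma_n(w))\bigl(\widetilde\gamma_n'(w)-1\bigr)+\bigl(\widetilde g'(\widetilde\gamma_n(w))-\widetilde g'(w)\bigr)\Bigr],\qquad w=\widetilde g^{-1}(x).
$$
The first bracketed piece is a product of a factor bounded in $C^\alpha$ with $\widetilde\gamma_n'-1\to0$ in $C^\alpha$, so its seminorm tends to $0$ by the Leibniz estimate $c_\alpha(uv)\le\|u\|_\infty c_\alpha(v)+\|v\|_\infty c_\alpha(u)$. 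The main obstacle is the second piece, the seminorm of $\widetilde g'\circ\widetilde\gamma_n-\widetilde g'$, in which $\widetilde g'$ is only $\alpha$-H\"older: controlling $c_\alpha(\widetilde g'\circ\widetilde\gamma_n-\widetilde g')$ requires far more than the uniform convergence $\widetilde\gamma_n\to\id$, because a crude split into scales $|x-y|\lessgtr\eta$ merely interpolates between $C^0$-smallness and the bounded $C^\alpha$-norm and yields smallness only at an exponent $\beta<\alpha$ (exactly the half-loss phenomenon of the Carleson estimate discussed in the introduction). To recover the exact exponent I would pass through the quasiconformal side: combining Carleson's connection between the $C^{1+\alpha}$-regularity of a circle map and the decay of the complex dilatation of its quasiconformal extension with the sharp decay estimates of Section \ref{4} (Theorem \ref{basic1} and Corollary \ref{interior}) and the comparability $1-|f(z)|\asymp1-|z|$ furnished by Theorem \ref{th0}, the conjugation $\gamma_n\mapsto g\circ\gamma_n\circ g^{-1}$ should be analysed through the weighted supremum norm $\Vert\cdot\Vert_{\infty,\alpha}$ of Beltrami coefficients, where the bounded distortion of the weight under $f^{\mu_g}$ is designed to keep the estimate at the correct order $\alpha$. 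This transfer from the real-variable H\"older seminorm to the weighted dilatation is the delicate heart of the argument, and I expect it, rather than the elementary bookkeeping above, to be where the real work lies; once it is in place, $c_\alpha(\phi_n)\to0$ follows and the proposition is proved.
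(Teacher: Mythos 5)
Your reduction of the proposition to the continuity of the inner automorphisms $\gamma\mapsto g\circ\gamma\circ g^{-1}$ at the identity is exactly the paper's route (the paper obtains this reduction by citing \cite[Lemma 1.1]{GS}), and your treatment of the $C^0$ and $C^1$ parts and of the first bracketed piece is correct. The gap is that the decisive claim, $c_\alpha\bigl(\widetilde g'\circ\widetilde\gamma_n-\widetilde g'\bigr)\to 0$, is never established: you correctly isolate it, correctly observe that interpolating between $C^0$-smallness and a bounded $C^\alpha$-seminorm only gives smallness at exponents $\beta<\alpha$, and then defer the entire difficulty to an unspecified transfer to the weighted Beltrami norm. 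That deferral cannot succeed, because the claim is false in general: translation does not act strongly continuously on the (big) H\"older class. Concretely, take $g$ with $\widetilde g'(u)=a+c|u-u_0|^\alpha$ near some $u_0$ and smooth elsewhere, and let $\widetilde\gamma_n(u)=u+\epsilon_n\eta(u)$ for a fixed bump $\eta\equiv 1$ near $u_0$, with $\epsilon_n\to 0$; then $p_{1+\alpha}(\gamma_n)\to 0$, while $(g\circ\gamma_n\circ g^{-1})'$ takes the values $(a+c\epsilon_n^\alpha)/a$ and $a/(a+c\epsilon_n^\alpha)$ at the two points $\widetilde g(u_0)$ and $\widetilde g(u_0-\epsilon_n)$, whose distance is about $a\epsilon_n$; hence $c_\alpha(g\circ\gamma_n\circ g^{-1})\gtrsim 2c/a^{1+\alpha}$ stays bounded away from $0$. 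The quasiconformal detour you propose cannot evade this, since by Corollary \ref{equiv} together with Theorems \ref{diff-qc} and \ref{key} the existence of extensions of $g\circ\gamma_n\circ g^{-1}$ with $\Vert\mu_n\Vert_{\infty,\alpha}\to 0$ is \emph{equivalent} to $c_\alpha(g\circ\gamma_n\circ g^{-1})\to 0$; the weighted-dilatation formulation is the same assertion in disguise, not a way around it.

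You should also know that the paper's own proof is thin at precisely the point you flagged: it asserts that the H\"older difference quotient of $(h\circ g_n\circ h^{-1})'$ is ``uniformly asymptotic to'' that of $g_n'\circ h^{-1}$, which implicitly uses that the factor $h'(g_n\circ h^{-1}(x))/h'(h^{-1}(x))$ tends to $1$ not merely uniformly but with vanishing $\alpha$-H\"older seminorm --- exactly the unproved (and, by the example above, generally false) statement. So your instinct that the real work lies there is sound; what your proposal misses is that for a general $h\in\Diff^{1+\alpha}(\S1)$, whose derivative need only lie in the big H\"older class rather than in the little one (the closure of smooth functions, where translation does act strongly continuously), this step cannot be closed, so the argument as structured --- yours and the paper's alike --- does not go through.
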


\begin{proof}
According to Lemma 1.1 in \cite{GS}, we have only to show that the adjoint map is continuous at $\id$;
if $p_{1+\alpha}(g_n) \to 0$ as $n \to \infty$, then $p_{1+\alpha}(h \circ g_n \circ h^{-1}) \to 0$ for every $h \in \Diff_+^{1+\alpha}(\S1)$.
We have that $h \circ g_n \circ h^{-1} \to \id$ and 
$$
(\widetilde{h \circ g_n \circ h^{-1}})'(x)=\frac{\widetilde{h}'(\widetilde{g_n \circ h^{-1}}(x))}{\widetilde{h}'(\widetilde{h^{-1}}(x))} \widetilde{g_n}'(\widetilde{h^{-1}}(x)) \to 1
$$ 
uniformly. Furthermore,
\begin{align*}
&\quad \frac{|(\widetilde{h \circ g_n \circ h^{-1}})'(x)-(\widetilde{h \circ g_n \circ h^{-1}})'(y)|}{|x-y|^\alpha}\\
&=
\left|\frac{\widetilde{h}'(\widetilde{g_n \circ h^{-1}}(x))}{\widetilde{h}'(\widetilde{h^{-1}}(x))} \widetilde{g_n}'(\widetilde{h^{-1}}(x))-\frac{\widetilde{h}'(\widetilde{g_n \circ h^{-1}}(y))}{\widetilde{h}'(\widetilde{h^{-1}}(y))} \widetilde{g_n}'(\widetilde{h^{-1}}(y))\right|
\cdot |x-y|^{-\alpha},
\end{align*}
which is uniformly asymptotic to
$$
\frac{|\widetilde{g_n}'(\widetilde{h^{-1}}(x))-\widetilde{g_n}'(\widetilde{h^{-1}}(y))|}{|x-y|^\alpha} \leq \frac{c_\alpha(g_n)|\widetilde{h^{-1}}(x)-\widetilde{h^{-1}}(y)|^\alpha}{|x-y|^\alpha}.
$$
Because $c_\alpha(g_n) \to 0$, we see that $c_\alpha(h \circ g_n \circ h^{-1}) \to 0$ as $n \to \infty$.
\end{proof}

As every circle diffeomorphism is symmetric, $\Diff_+^{1+\alpha}(\S1)$ is a subgroup of $\Sym$.
We will characterize an element $g$ of $\Diff_+^{1+\alpha}(\S1)$ in terms of the
quasisymmetry quotient of $g$. 
This was shown in Lemma 5 in Carleson \cite{Car} 
(see also Section 9 of Gardiner and Sullivan \cite{GS}). 
The following statement and a detailed proof can be found in Theorem 7.1 of \cite{Mat0} and its corollary.

\begin{theorem}\label{qs-diff}
For a fixed $\alpha \in (0,1)$, we assume that there is some $b \geq 0$ such that
the lift $\widetilde g$ of $g \in \Sym$ satisfies
$$
(1+bt^\alpha)^{-1} \leq m_{\widetilde g}(x,t) \leq 1+bt^\alpha
$$
for every $x \in [0,1)$ and every $t \in (0,1/2]$.
Then, $g$ belongs to $\Diff_+^{1+\alpha}(\S1)$, and $c_\alpha(g)$ depends only on $b$ and
tends to $0$ uniformly as $b \to 0$.
Moreover, $\widetilde g'(x)$ is uniformly bounded from above and away from $0$
by constants depending only on $b$ with $\alpha$ fixed, which tend to $1$ as $b \to 0$.
\end{theorem}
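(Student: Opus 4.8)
The plan is to reconstruct $\widetilde g'$ from the quasisymmetric quotients by a dyadic bisection scheme and then upgrade pointwise differentiability to $\alpha$-H\"older regularity of the derivative. First I would introduce the dyadic forward difference quotients $D_n(x)=2^n(\widetilde g(x+2^{-n})-\widetilde g(x))$ for $n\ge 0$. Bisecting $[x,x+2^{-n}]$ at its midpoint and reading off the quasisymmetric quotient there produces $\rho_n(x):=m_{\widetilde g}(x+2^{-n-1},2^{-n-1})$, which by hypothesis (periodic in $x$) satisfies $(1+b2^{-(n+1)\alpha})^{-1}\le \rho_n(x)\le 1+b2^{-(n+1)\alpha}$, together with the identity $D_{n+1}(x)=\tfrac{2}{1+\rho_n(x)}D_n(x)$. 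Since $\widetilde g(x+1)=\widetilde g(x)+1$ forces $D_0\equiv 1$, I obtain the telescoping product $D_n(x)=\prod_{j=0}^{n-1}\tfrac{2}{1+\rho_j(x)}$. Because $\bigl|\log\tfrac{2}{1+\rho_j(x)}\bigr|\le Cb\,2^{-j\alpha}$, the series $\sum_j\log\tfrac{2}{1+\rho_j(x)}$ converges absolutely and uniformly, so $D_n\to L:=\prod_{j\ge0}\tfrac{2}{1+\rho_j}$ uniformly, with $|\log L(x)|\le C_0 b$. This already gives the ``moreover'' assertion: $L$ is bounded above and away from $0$ by constants depending only on $b$ and tending to $1$ as $b\to0$, once $L$ is identified with $\widetilde g'$.

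Next I would identify $\widetilde g'=L$. Writing the binary expansion $t=\sum_m\varepsilon_m2^{-m}$ and setting $x_m=x+\sum_{j\le m}\varepsilon_j2^{-j}$, the increment telescopes as $\widetilde g(x+t)-\widetilde g(x)=\sum_m\varepsilon_m2^{-m}D_m(x_{m-1})$. Comparing with $tL(x)$ and using $\sum_m\varepsilon_m2^{-m}=t$, I split the error into the term $\frac1t\sum_m\varepsilon_m2^{-m}\,|D_m(x_{m-1})-L(x_{m-1})|$, which is $O(b\,t^\alpha)$ because the nonzero digits start near $m\approx\log_2(1/t)$ and $|D_m-L|\le Cb2^{-m\alpha}$, plus a convex average of the values $L(x_{m-1})$ over $[x,x+t]$ that differs from $L(x)$ by at most the oscillation of $L$. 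Continuity of $L$ then yields the right derivative $\widetilde g'_+(x)=L(x)$, and the symmetric argument gives the two-sided derivative; hence $\widetilde g\in C^1$ with $\widetilde g'=L$ continuous and comparable to $1$.

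The regularity $g\in\Diff^{1+\alpha}(\S1)$ amounts to $L=\widetilde g'$ being $\alpha$-H\"older with seminorm $O(b)$, and this is the hard part. My strategy is to reduce it to a second-difference (Zygmund-type) bound: since the increments of $\widetilde g$ are comparable to their length with constants near $1$, the hypothesis $|m_{\widetilde g}(x,t)-1|\le Cb\,t^\alpha$ rewrites as $|\widetilde g(x+t)-2\widetilde g(x)+\widetilde g(x-t)|\le C'b\,t^{1+\alpha}$ uniformly in $x$ for $0<t\le1/2$. The remaining step is the classical implication that a uniform second-difference bound of order $t^{1+\alpha}$ forces $\widetilde g'\in C^\alpha$ with seminorm at most $C''b$. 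Expressed through the scheme above, the obstruction sits in the term $|D_m(x)-D_m(y)|$ for $|x-y|\le2^{-m}$: it cannot be handled by the naive triangle inequality and requires summing the second-difference estimates simultaneously across the scale and space variables (a Littlewood--Paley type argument). This is exactly where Carleson's original computation and the detailed proof in \cite[Theorem 7.1]{Mat0} carry the load; completing it shows that $c_\alpha(g)$ depends only on $b$ and tends to $0$ as $b\to0$.
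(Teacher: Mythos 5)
The paper offers no internal proof of this theorem to compare against: it is quoted from Carleson \cite[Lemma 5]{Car} with the detailed argument deferred to \cite[Theorem 7.1]{Mat0}. Judged on its own terms, your dyadic scheme is sound as far as it goes. The identity $D_{n+1}=\frac{2}{1+\rho_n}D_n$, the product formula with $D_0\equiv 1$, the bound $|\log\frac{2}{1+\rho_j}|\le\log(1+\frac{1}{2}b2^{-(j+1)\alpha})$ giving $|\log L|=O(b)$ with constant depending only on $\alpha$, and the identification $\widetilde g'=L$ via binary expansions are all correct, and together they do establish the ``moreover'' clause. The weakness is that the central assertion --- that $\widetilde g'$ is $\alpha$-H\"older with $c_\alpha(g)=O(b)$, which is precisely what $g\in\Diff^{1+\alpha}(\S1)$ means --- is the one step you do not carry out. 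Your reduction to the second-difference bound $|\widetilde g(x+t)-2\widetilde g(x)+\widetilde g(x-t)|\le C'b\,t^{1+\alpha}$ is fine (the denominator $\widetilde g(x)-\widetilde g(x-t)\asymp t$ is controlled by the bounds on $\widetilde g'$ you already proved, with $C'\to 1$ as $b\to 0$), but you then stop and defer to the references. As written, the proof is incomplete at its essential point.

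The gap is real but easy to close, and you substantially overestimate what is needed: no Littlewood--Paley argument and no simultaneous summation over scale and space are required. Set $\phi_h(x)=\frac{\widetilde g(x+h)-\widetilde g(x)}{h}$. Then
$$
\phi_h(x)-\phi_{h/2}(x)=\frac{\widetilde g(x+h)-2\widetilde g(x+h/2)+\widetilde g(x)}{h},
$$
so $|\phi_h(x)-\phi_{h/2}(x)|\le C'b\,2^{-1-\alpha}h^{\alpha}$; telescoping over $h,h/2,h/4,\dots$ and using $\phi_{2^{-n}h}(x)\to\widetilde g'(x)$ gives $|\widetilde g'(x)-\phi_h(x)|\le \frac{2^{-1-\alpha}}{1-2^{-\alpha}}\,C'b\,h^{\alpha}$ uniformly in $x$ and $0<h\le 1/2$. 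For $y=x+h$ with $0<h\le 1/2$, write
$$
\widetilde g'(y)-\widetilde g'(x)=\bigl[\widetilde g'(y)-\phi_h(y)\bigr]-\bigl[\widetilde g'(x)-\phi_h(x)\bigr]+\frac{\widetilde g(x+2h)-2\widetilde g(x+h)+\widetilde g(x)}{h},
$$
and note the last term is again bounded by $C''b\,h^{\alpha}$. This yields $c_\alpha(g)\le C(\alpha)\,C'b$, which tends to $0$ as $b\to 0$, completing the theorem. (This is the classical Marchaud--Zygmund characterization of $C^{1+\alpha}$, $0<\alpha<1$, by second differences; it is a three-line telescoping, not an obstruction.) With that inserted, your argument is a complete and self-contained proof.
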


Conversely, every element $g \in \Diff_+^{1+\alpha}(\S1)$ $(\alpha \in (0,1))$ belongs to $\Sym$
with a gauge function for symmetry of order $O(t^\alpha)$. More precisely, we have the following:

\begin{proposition}\label{diff-qs}
For $g \in \Diff_+^{1+\alpha}(\S1)$, there is a constant $b \geq 0$ such that
$$
(1+bt^\alpha)^{-1} \leq \ m_{\widetilde g}(x,t) \leq 1+bt^\alpha 
$$
for every $x \in [0,1)$ and every $t \in (0,1/2]$,
where $b$ can be taken depending only on $c=c_\alpha(g)$ when $c \leq 1$ and
tends to $0$ as $c \to 0$. 
\end{proposition}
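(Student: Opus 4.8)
The plan is to express the quasisymmetric quotient directly through the derivative $\widetilde g'$ and then exploit its $\alpha$-H\"older continuity. Writing $\widetilde g(x+t)-\widetilde g(x)=t\int_0^1 \widetilde g'(x+tu)\,du$ and $\widetilde g(x)-\widetilde g(x-t)=t\int_0^1 \widetilde g'(x-tu)\,du$ by the fundamental theorem of calculus, the factor $t$ cancels and
\[
m_{\widetilde g}(x,t)=\frac{N}{D},\qquad N=\int_0^1 \widetilde g'(x+tu)\,du,\quad D=\int_0^1 \widetilde g'(x-tu)\,du .
\]
Thus it suffices to show that $N$ and $D$ are both close to the common value $\widetilde g'(x)$, with a deviation of order $c_\alpha(g)\,t^\alpha$, and that the denominator stays bounded away from $0$.

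First I would record a uniform two-sided bound for $\widetilde g'$. Since $\widetilde g(x+1)=\widetilde g(x)+1$, we have $\int_0^1 \widetilde g'(x)\,dx=1$, so there is $x_0$ with $\widetilde g'(x_0)=1$; as $\widetilde g'$ is $1$-periodic, every $x$ lies within distance $1/2$ of some translate $x_0+k$, and the H\"older bound (which is valid precisely for increments of length at most $1/2$) gives $|\widetilde g'(x)-1|\le c_\alpha(g)(1/2)^\alpha$. In particular, when $c_\alpha(g)\le 1$ the derivative satisfies $\widetilde g'(x)\ge 1-c_\alpha(g)(1/2)^\alpha\ge 1-(1/2)^\alpha>0$, so both $N$ and $D$ are bounded below by $m_0:=1-c_\alpha(g)(1/2)^\alpha$.

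Next I would estimate the difference $N-D$. Comparing each integrand to the midpoint value $\widetilde g'(x)$ rather than directly to one another, which keeps every increment at most $tu<1/2$ so that the H\"older estimate applies, yields
\[
|N-D|\le \int_0^1\!\big(|\widetilde g'(x+tu)-\widetilde g'(x)|+|\widetilde g'(x-tu)-\widetilde g'(x)|\big)\,du
\le \frac{2\,c_\alpha(g)}{1+\alpha}\,t^\alpha .
\]
Dividing by $D\ge m_0$ bounds $|m_{\widetilde g}(x,t)-1|$ by $b\,t^\alpha$ with $b=\tfrac{2c_\alpha(g)}{(1+\alpha)m_0}$, which gives the upper estimate $m_{\widetilde g}(x,t)\le 1+bt^\alpha$; applying the same bound to $|D-N|/N\le bt^\alpha$ (using $N\ge m_0$) gives $m_{\widetilde g}(x,t)^{-1}\le 1+bt^\alpha$, i.e. the matching lower estimate. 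The constant $b=\frac{2c_\alpha(g)}{(1+\alpha)(1-c_\alpha(g)(1/2)^\alpha)}$ depends only on $c_\alpha(g)$ with $\alpha$ fixed, is finite for $c_\alpha(g)\le 1$, and tends to $0$ as $c_\alpha(g)\to 0$.

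The one point demanding care, which is the main obstacle, is keeping the denominator uniformly positive: this is exactly where the normalization $\int_0^1\widetilde g'(x)\,dx=1$ enters, and where restricting to $c_\alpha(g)\le 1$ is used to guarantee $m_0>0$. One must also be mindful that the H\"older constant $c_\alpha(g)$ controls increments only up to length $1/2$, which is what dictates comparing the integrands to the midpoint value $\widetilde g'(x)$ instead of across the full interval of length $2t$.
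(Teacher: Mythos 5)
Your argument is essentially the paper's: both reduce the quasisymmetric quotient to a ratio of values (or averages) of $\widetilde g'$, control the numerator by the $\alpha$-H\"older continuity of $\widetilde g'$, and control the denominator by the normalization $\int_0^1 \widetilde g'(x)\,dx=1$, which forces $\widetilde g'\geq 1-c_\alpha(g)(1/2)^\alpha$. The paper uses the mean value theorem to write $m_{\widetilde g}(x,t)=\widetilde g'(\xi_+)/\widetilde g'(\xi_-)$ and bounds $|\widetilde g'(\xi_+)-\widetilde g'(\xi_-)|\leq c_\alpha(g)(2t)^\alpha$, whereas you use the integral form and compare each integrand to the midpoint value $\widetilde g'(x)$; your variant is slightly cleaner in that every increment stays below $1/2$, so the definition of $c_\alpha(g)$ applies verbatim (the paper's increment $|\xi_+-\xi_-|$ can be as large as $2t<1$, which needs a two-step splitting to be fully rigorous). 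The constants differ harmlessly: you get $2c_\alpha(g)/(1+\alpha)$ where the paper gets $2^\alpha c_\alpha(g)$.

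One small gap: the first sentence of the statement asserts the existence of some $b\geq 0$ for \emph{every} $g\in\Diff^{1+\alpha}(\S1)$, but your denominator bound $m_0=1-c_\alpha(g)(1/2)^\alpha$ is only positive when $c_\alpha(g)(1/2)^\alpha<1$, so your formula for $b$ degenerates for large $c_\alpha(g)$. The fix is the one the paper uses: since $g$ is a $C^1$ diffeomorphism, $\widetilde g'$ is continuous, positive and $1$-periodic, hence bounded below by some $c_0=c_0(g)>0$, and one takes $\max\{m_0,c_0\}$ in the denominator. This does not affect the quantitatively important clause (dependence only on $c_\alpha(g)$ when $c_\alpha(g)\leq 1$, and $b\to 0$ as $c_\alpha(g)\to 0$), which your proof establishes correctly.
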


For the proof, we need a simple claim.

\begin{proposition}\label{derivative}
Every $g \in \Diff_+^{1+\alpha}(\S1)$ satisfies
$$
1-c_\alpha(g)<1-c_\alpha(g)(1/2)^\alpha \leq \widetilde g'(x) \leq 1+c_\alpha(g)(1/2)^\alpha <1+c_\alpha(g).
$$
\end{proposition}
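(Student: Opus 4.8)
The plan is to exploit the normalization that being a circle diffeomorphism forces on the lift $\widetilde g$. Since $\widetilde g$ satisfies $\widetilde g(x+1)=\widetilde g(x)+1$, its derivative $\widetilde g'$ is continuous, strictly positive, and $1$-periodic, and moreover
$$
\int_0^1 \widetilde g'(x)\,dx=\widetilde g(1)-\widetilde g(0)=1.
$$
First I would deduce from the mean value theorem for integrals (equivalently, from the intermediate value theorem, since a continuous function with average $1$ cannot be everywhere $>1$ nor everywhere $<1$) that there is a point $x_0$ with $\widetilde g'(x_0)=1$. By periodicity of $\widetilde g'$, each translate $x_0+n$ with $n$ an integer is again a point where the derivative equals $1$.

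Next, for an arbitrary $x\in\R$, the $1$-periodicity guarantees that some translate $x_0+n$ satisfies $|x-(x_0+n)|\le 1/2$ while $\widetilde g'(x_0+n)=1$. This places the pair $(x,x_0+n)$ in exactly the range over which the H\"older constant $c_\alpha(g)$ controls the difference quotient, so that
$$
|\widetilde g'(x)-1|=|\widetilde g'(x)-\widetilde g'(x_0+n)|\le c_\alpha(g)\,|x-(x_0+n)|^\alpha\le c_\alpha(g)\,(1/2)^\alpha.
$$
This yields the two middle inequalities of the assertion simultaneously. Finally, since $0<\alpha<1$ gives $(1/2)^\alpha<1$, we obtain $c_\alpha(g)(1/2)^\alpha<c_\alpha(g)$ whenever $c_\alpha(g)>0$, which upgrades the bounds to the outer strict inequalities $1-c_\alpha(g)<1-c_\alpha(g)(1/2)^\alpha$ and $1+c_\alpha(g)(1/2)^\alpha<1+c_\alpha(g)$.

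The only point that requires care — and the sole subtlety of the argument — is respecting the constraint $|x-y|\le 1/2$ built into the definition of $c_\alpha(g)$. It would be tempting to compare $\widetilde g'(x)$ directly with $\widetilde g'(x_0)$ for a fixed $x_0\in[0,1)$, but that distance can exceed $1/2$ and then the H\"older bound is unavailable. The periodicity of $\widetilde g'$ is precisely what allows one to always select a comparison point within the admissible range, and once this is arranged the estimate is immediate.
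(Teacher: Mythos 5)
Your proof is correct and follows essentially the same route as the paper: both use $\int_0^1 \widetilde g'(x)\,dx=1$ to produce a comparison point, then invoke the $1$-periodicity of $\widetilde g'$ to bring that point within distance $1/2$ of an arbitrary $x$ so that the H\"older constant $c_\alpha(g)$ applies. The only cosmetic difference is that you use the intermediate value theorem to get a single point where $\widetilde g'=1$ exactly, while the paper works with two points where $\widetilde g'\geq 1$ and $\widetilde g'\leq 1$ respectively; your remark that the outer strict inequalities require $c_\alpha(g)>0$ is a fair (and slightly more careful) observation.
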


\begin{proof}
As $\int_0^{1} \widetilde g'(x)dx=1$,
there exists some $x_0 \in [0,1]$ such that $\widetilde g'(x_0) \geq 1$.
Likewise, there exists some $x'_0 \in [0,1]$ such that $\widetilde g'(x'_0) \leq 1$.
The H\"older continuity of $\widetilde g'$ implies that
$$
|\widetilde g'(x)-\widetilde g'(x_0)| \leq c_\alpha(g)|x-x_0|^\alpha \leq c_\alpha(g)(1/2)^\alpha
$$
for every $x \in \R$ with $|x-x_0| \leq 1/2$, and the same is true for $x'_0$. Then,
using the periodicity $\widetilde g'(x+1)=\widetilde g'(x)$, we have
the assertion.
\end{proof}

\noindent
{\it Proof of Proposition \ref{diff-qs}.}
The mean value theorem asserts that there are $\xi_+$ and $\xi_-$ such that
\begin{align*}
\widetilde g(x+t)-\widetilde g(x) &= t \widetilde g'(\xi_+) \quad (x< \xi_+ <x+t);\\
\widetilde g(x)-\widetilde g(x-t) &= t \widetilde g'(\xi_-) \quad (x-t< \xi_- <x).
\end{align*}
This gives
$$
m_{\widetilde g}(x,t)=1+\frac{\widetilde g'(\xi_+)-\widetilde g'(\xi_-)}{\widetilde g'(\xi_-)};\quad
m_{\widetilde g}(x,t)^{-1}=1+\frac{\widetilde g'(\xi_-)-\widetilde g'(\xi_+)}{\widetilde g'(\xi_+)}.
$$
Here, we can see that
$$
|\widetilde g'(\xi_+)-\widetilde g'(\xi_-)| \leq c_\alpha(g)|\xi_+-\xi_-|^\alpha \leq c_\alpha(g)(2t)^\alpha
$$ 
by the H\"older continuity of $\widetilde g'$. 
Proposition \ref{derivative} gives the lower estimate of $\widetilde g'$.
Moreover, as $g$ is a diffeomorphism,
there is some $c_0>0$ depending on $g$ such that 
$\widetilde g'(x) \geq c_0$. Therefore,
$$
m_{\widetilde g}(x,t)^{\pm 1} \leq 1+\frac{2^\alpha c_\alpha(g)}{\max\{1-c_\alpha(g)(1/2)^\alpha,c_0\}}\,t^\alpha.
$$
We set the coefficient of $t^\alpha$ as $b$.
If $c_\alpha(g) \leq 1$, then $1-c_\alpha(g)(1/2)^\alpha>0$ and $b$ depend only on 
$c=c_\alpha(g)$. Moreover, $b \to 0$ as $c \to 0$.
\qed
\medskip

Now we see that $g \in \Diff_+^{1+\alpha}(\S1)$ if and only if $m_{\widetilde g}(x,t)^{\pm 1}=1+O(t^\alpha)$ $(t \to 0)$.
Hereafter, we use a constant 
$$
b_\alpha(g)
=\sup_{0 \leq x < 1,\, 0< t \leq 1/2} \max_{\epsilon=\pm 1} \,
\frac{m_{\widetilde g}(x,t)^{\epsilon}-1}{t^\alpha}.
$$
Then, $m_{\widetilde g}(x,t)^{\pm 1} \leq 1+b_\alpha(g)t^\alpha$ and
the quasisymmetry constant satisfies $M(g) \leq 1+b_\alpha(g)$.
The convergence $b_\alpha(g) \to 0$ (and so on) is considered for a sequence of $g$, and 
this is said to be quantitative as $c_\alpha(g) \to 0$ (and so on)
if there is a majorant of $b_\alpha(g)$ in terms of $c_\alpha(g)$.

\begin{corollary}\label{equiv}
For $g \subset \Diff_+^{1+\alpha}(\S1)$, we have that
$c_\alpha(g) \to 0$ if and only if $b_\alpha(g) \to 0$ quantitatively.
Moreover,
under the extra assumption that $g$ is 
normalized so that it fixes the three points on $\S1$ $(g \in \QS_*)$,
$p_{1+\alpha}(g) \to 0$ if and only if $b_\alpha(g) \to 0$ or $c_\alpha(g) \to 0$ quantitatively.
\end{corollary}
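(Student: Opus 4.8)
The plan is to read the first equivalence off the two preceding results and then to sandwich $p_{1+\alpha}$ between constant multiples of $c_\alpha$. First I would note that, by its very definition, $b_\alpha(g)$ is precisely the smallest constant $b$ for which the two-sided bound $m_{\widetilde g}(x,t)^{\pm 1} \le 1 + b t^\alpha$ holds on $[0,1) \times (0,1/2)$. Hence Theorem \ref{qs-diff}, applied with $b = b_\alpha(g)$, shows that $c_\alpha(g)$ is dominated by a function of $b_\alpha(g)$ that vanishes as $b_\alpha(g) \to 0$, which gives the implication $b_\alpha(g_n) \to 0 \Rightarrow c_\alpha(g_n) \to 0$. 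Conversely, Proposition \ref{diff-qs} produces, for each $g_n$ with $c_\alpha(g_n) \le 1$, an admissible constant $b = b(c_\alpha(g_n))$ tending to $0$ as $c_\alpha(g_n) \to 0$; since $b_\alpha(g_n) \le b$ by optimality, I obtain $c_\alpha(g_n) \to 0 \Rightarrow b_\alpha(g_n) \to 0$. These two implications are exactly the first assertion.

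For the second assertion I would work directly with the three summands of $p_{1+\alpha}$. The inequality $c_\alpha(g) \le p_{1+\alpha}(g)$ is immediate because $c_\alpha$ is one of the summands, so $p_{1+\alpha}(g_n) \to 0$ forces $c_\alpha(g_n) \to 0$, and hence $b_\alpha(g_n) \to 0$ by the first part. For the reverse I would bound the remaining two terms by $c_\alpha$. Proposition \ref{derivative} gives $\sup_x |\widetilde g'(x) - 1| \le c_\alpha(g)(1/2)^\alpha$, handling the derivative term. For the displacement term I would write $\widetilde g(x) - \widetilde g(0) - x = \int_0^x (\widetilde g'(s) - 1)\,ds$, so that $\sup_{0 \le x < 1}|\widetilde g(x) - \widetilde g(0) - x| \le \sup_s |\widetilde g'(s) - 1| \le c_\alpha(g)(1/2)^\alpha$ as well. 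Summing the three bounds yields $p_{1+\alpha}(g) \le (1 + 2^{1-\alpha})\,c_\alpha(g)$, whence $c_\alpha(g_n) \to 0 \Rightarrow p_{1+\alpha}(g_n) \to 0$. Combining this with the first part, all three conditions $p_{1+\alpha}(g_n) \to 0$, $b_\alpha(g_n) \to 0$ and $c_\alpha(g_n) \to 0$ become equivalent.

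The normalization $g_n \in \QS_*$ enters only to place the maps among the canonical representatives used for $T_0^\alpha$; the estimates above do not actually invoke it, since the subtraction of $\widetilde g(0)$ in $p_{1+\alpha}$ already removes the ambiguity by a rotation, and $c_\alpha(g_n) \to 0$ forces each $g_n$ to approach a rotation rather than a general M\"obius transformation. Because all the genuine analytic content is carried by Theorem \ref{qs-diff} and Proposition \ref{diff-qs}, the corollary is essentially a bookkeeping of sandwiching inequalities, and I expect no substantial obstacle beyond two elementary points of care: verifying that $b_\alpha(g)$ is the \emph{optimal} constant feeding into both cited results, and checking that the displacement term is genuinely dominated by the derivative term through the integral representation.
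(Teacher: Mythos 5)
Your proof is correct. The first equivalence is obtained exactly as in the paper: since $b_\alpha(g)$ is by definition the least constant $b$ for which $m_{\widetilde g}(x,t)^{\pm1}\le 1+bt^\alpha$ holds on the stated range, Theorem \ref{qs-diff} bounds $c_\alpha$ by a quantity depending only on $b_\alpha$ and vanishing with it, and Proposition \ref{diff-qs} does the reverse. Where you genuinely differ from the paper is in the treatment of the zeroth-order term of $p_{1+\alpha}$ in the second assertion. The paper argues softly: from $M(g_n)\le 1+b_\alpha(g_n)\to1$ together with the normalization $g_n\in\QS_*$ it deduces that $g_n\to\id$ uniformly, and from this that $\sup_x|\widetilde g_n(x)-\widetilde g_n(0)-x|\to0$. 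You instead integrate the derivative bound of Proposition \ref{derivative}, writing $\widetilde g(x)-\widetilde g(0)-x=\int_0^x(\widetilde g'(s)-1)\,ds$, which yields the explicit inequality $p_{1+\alpha}(g)\le(1+2^{1-\alpha})\,c_\alpha(g)$ valid for every $g\in\Diff^{1+\alpha}(\S1)$. This is more elementary and quantitative, and it shows that the normalization hypothesis is in fact superfluous for the stated equivalence: the subtraction of $\widetilde g(0)$ in the first summand of $p_{1+\alpha}$ already removes the rotational ambiguity, and Proposition \ref{derivative} needs no normalization since its proof only uses $\int_0^1\widetilde g'=1$. What the paper's route buys is consistency with the quasisymmetric machinery used elsewhere to topologize $\QS$; what yours buys is a sharper, hypothesis-free Lipschitz-type comparison between $p_{1+\alpha}$ and $c_\alpha$. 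The two ``points of care'' you flag at the end are both easily verified, exactly as you anticipate.
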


\begin{proof}
The first statement directly follows from Theorem \ref{qs-diff} and Proposition \ref{diff-qs}.
For the second statement, we have only to show that $b_\alpha(g) \to 0$ or $c_\alpha(g) \to 0$ implies
$p_{1+\alpha}(g) \to 0$ quantitatively 
under the normalization. Theorem \ref{qs-diff} or Proposition \ref{derivative} verifies that $\widetilde g'$
converge to $1$ uniformly. Moreover, as $M(g) \leq 1+b_\alpha(g) \to 1$ and $g$ are normalized,
$g$ converge to $\id$ uniformly. Hence, we obtain $p_{1+\alpha}(g) \to 0$.
\end{proof}

Finally, in this section,
we prepare the investigation of $\Diff_+^{1+\alpha}(\S1)$ by the quasiconformal extension to $\D$. 
This will be completed in the next section.
We recall that,
as $\Diff_+^{1+\alpha}(\S1) \subset \Sym$, there is a quasiconformal extension that is
asymptotically conformal. We look at the decay order of its complex dilatation 
close to the boundary.

\begin{theorem}\label{diff-qc}
For every $g \in \Diff_+^{1+\alpha}(\S1)$, there exists a
quasiconformal extension $f \in \AC(\D)$ of $g$ whose complex dilatation $\mu$ belongs to $\Bel_0^\alpha(\D)$.
Here, $\Vert \mu \Vert_{\infty,\alpha}$ tends to $0$ quantitatively as 
$b_{\alpha}(g) \to 0$ or $c_{\alpha}(g) \to 0$.
\end{theorem}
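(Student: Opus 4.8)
The plan is to build the extension through the Beurling--Ahlfors extension of the lift and then transfer its dilatation estimate from $\H2$ to $\D$ by means of the holomorphic universal cover $u(z)=e^{2\pi i z}$, exactly as in the construction of $e_{\rm BA}:\QS \to \QC(\D)$ recalled in Section \ref{3}. First I would invoke Proposition \ref{diff-qs} to record that the lift $\widetilde g$ of $g \in \Diff^{1+\alpha}(\S1)$ satisfies $m_{\widetilde g}(x,t)^{\pm 1} \leq 1+b_\alpha(g)\,t^\alpha$ for $x \in [0,1)$ and $t \in (0,1/2)$, so that $\varepsilon(t)=b_\alpha(g)\,t^\alpha$ serves as a gauge function for the symmetry of $\widetilde g$ near $t=0$. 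Let $F:\H2 \to \H2$ be the Beurling--Ahlfors extension of $\widetilde g$, and let $f \in \QC(\D)$ be the automorphism obtained by projecting $F$ under $u$ to $\D-\{0\}$ and filling the puncture at $0$.

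The quantitative heart of the argument is Theorem \ref{Carleson1}, which gives
$$
|\mu_F(x+iy)| \leq 4\varepsilon(y)=4b_\alpha(g)\,y^\alpha \qquad (0<y<1/2).
$$
Because $u$ is holomorphic and $u \circ F = f \circ u$, the chain rule for complex dilatations shows that the conformal factor has modulus one, whence $|\mu_f(w)|=|\mu_F(z)|$ whenever $w=u(z)$; filling the puncture does not affect the essential supremum. Writing $w=e^{2\pi i z}$ gives $|w|=e^{-2\pi y}$, so $y=\tfrac{1}{2\pi}\log(1/|w|)$. Near $\S1$ this $\H2$-coordinate is comparable to the Euclidean boundary distance: the elementary inequality $\log s \leq s-1$ applied with $s=1/|w|$ yields $\log(1/|w|) \leq (1-|w|)/|w|$, so for $|w| \geq 1/2$ one gets $y \leq \pi^{-1}(1-|w|)$ and hence
$$
|\mu_f(w)| \leq 4\pi^{-\alpha}b_\alpha(g)\,(1-|w|)^\alpha.
$$
Multiplying by $\rho_{\D}^\alpha(w) \leq (2/(1-|w|))^\alpha$ bounds $\rho_{\D}^\alpha(w)|\mu_f(w)|$ by a constant multiple of $b_\alpha(g)$ on the annulus $\{1/2 \leq |w|<1\}$.

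It remains to treat the interior $\{|w|<1/2\}$, where $\rho_{\D}^\alpha(w) \leq (8/3)^\alpha$ is bounded but the refined estimate is useless since $y$ can be large. Here I would instead use the overall quasiconformality of the Beurling--Ahlfors extension: the maximal dilatation of $F$ is controlled by the quasisymmetry constant, $\Vert \mu_F \Vert_\infty \leq \delta(M(g))$ with $\delta(M) \to 0$ as $M \to 1$, while $M(g) \leq 1+b_\alpha(g)$. Thus $\rho_{\D}^\alpha(w)|\mu_f(w)| \leq (8/3)^\alpha\,\delta(M(g))$ on $\{|w|<1/2\}$, which again vanishes as $b_\alpha(g) \to 0$. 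Combining the two regions gives $\Vert \mu_f \Vert_{\infty,\alpha} \leq C(\alpha)\,b_\alpha(g)$ up to the interior $\delta$-term, so $\mu_f \in \Bel_0^\alpha(\D)$ and $f \in \AC(\D)$ since $|\mu_f(w)| \to 0$ as $|w| \to 1$; moreover $\Vert \mu_f \Vert_{\infty,\alpha} \to 0$ as $b_\alpha(g) \to 0$, which by Corollary \ref{equiv} is equivalent to $c_\alpha(g) \to 0$. The main obstacle I anticipate is precisely this matching: the Carleson bound is naturally stated in the logarithmic coordinate $y$ of $\H2$, so the delicate point is to make the comparison $y \asymp 1-|w|$ near $\S1$ uniform and to split $\D$ cleanly into a boundary annulus governed by the refined decay and an interior disk governed by the crude dilatation bound, in such a way that the two estimates assemble into a single $\Bel_0^\alpha$-norm bound vanishing with $b_\alpha(g)$.
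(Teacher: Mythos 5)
Your proposal follows the same route as the paper's own proof: Proposition \ref{diff-qs} to get the gauge $b_\alpha(g)t^\alpha$, Theorem \ref{Carleson1} for the Beurling--Ahlfors extension, transfer of the dilatation bound through the cover $u(z)=e^{2\pi iz}$ using $|\mu_f(u(z))|=|\mu_F(z)|$ and $y=-\log|w|/(2\pi)$, and the comparability $-\log|w|\asymp 1-|w|$ near $\S1$ together with the crude bound $\Vert\mu_F\Vert_\infty\to 0$ as $M(g)\to 1$ to cover the interior. The argument is correct; the paper merely packages your annulus/interior split into a single factor $d(\Vert\mu\Vert_\infty)$.
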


\begin{proof}
By Proposition \ref{diff-qs}, the lift $\widetilde g:\R \to \R$ of $g$ satisfies
$m_{\widetilde g}(x,t)^{\pm 1} \leq 1+b_\alpha(g)t^\alpha$ for a finite constant $b_\alpha(g) \geq 0$.
Then, by Theorem \ref{Carleson1}, 
the complex dilatation $\mu_F(z)$ of the Beurling--Ahlfors extension $F(z)$ of $\widetilde g$
satisfies
$|\mu_F(z)| \leq 4b_\alpha(g) y^\alpha$
for every $z=x+iy \in \H2$.
The projection $f:\D-\{0\} \to \D-\{0\}$ of $F$ under the holomorphic universal cover
$u:\H2 \to \D-\{0\}$ $(z \mapsto \zeta=e^{2\pi iz})$
is defined as $e_{\rm BA}(g)$ after filling $0$.

The complex dilatation $\mu$ of $f=e_{\rm BA}(g)$ satisfies
$$
|\mu(\zeta)|=|\mu_F(z)|=|\mu_F((\log \zeta)/(2\pi i))|
$$
for every $\zeta \in \D$.
As ${\rm Im}\,[(\log \zeta)/(2\pi i)]=-\log|\zeta|/(2\pi)$, the condition $|\mu_F(z)| \leq 4b_\alpha(g) y^\alpha$ yields 
$$
|\mu(\zeta)| \leq \frac{4b_\alpha(g)}{(2\pi)^\alpha}(-\log|\zeta|)^\alpha. 
$$
As $-\log |\zeta|$ is comparable to $1-|\zeta|$ near $|\zeta|=1$, 
we can find a continuous increasing function $d:[0,1) \to [1,\infty)$ with $\lim_{t \to 0} d(t)=1$ such that 
$$
|\mu(\zeta)| \leq \frac{4b_\alpha(g)}{(2\pi)^\alpha} d(\Vert \mu \Vert_\infty)(1-|\zeta|)^\alpha 
$$
for every $\zeta \in \D$.
Moreover, if $c_{\alpha}(g) \to 0$, then
$b_\alpha(g) \to 0$ by Proposition \ref{diff-qs}, and hence, $M(g) \to 1$ which implies 
that $\Vert \mu \Vert_\infty \to 0$ (see Theorem I.5.2 in \cite{Leh}). 
Therefore, we see that $\Vert \mu \Vert_{\infty,\alpha} \to 0$ quantitatively as 
$b_{\alpha}(g) \to 0$ or $c_{\alpha}(g) \to 0$.
\end{proof}

\section{Quasiconformal characterization of circle diffeomorphisms}\label{6}
We will establish the relationships among the following three indices
quantitatively: the exponent of H\"older continuity of the derivative of
a circle diffeomorphism $g$; the decay order of the complex dilatation of quasiconformal extension of $g$; and
the decay order of the Schwarzian derivative of the corresponding conformal homeomorphism.
We have seen the equivalence of the last two quantities (Theorem \ref{equivalence1}) and 
the implication of the second one from the first (Theorem \ref{diff-qc}).

The new addition is the converse of the statement of Theorem \ref{diff-qc}.
In Theorem \ref{ac-sym} and Corollary \ref{acsym}, we have seen that an asymptotically conformal homeomorphism $f \in \AC(\D)$
extends to a symmetric homeomorphism $g \in \Sym$ and provided a certain estimate of the gauge function
for symmetry in terms of the decay order of $\mu_f$. 
The order of the gauge function and the H\"older continuity of the derivative are related to each other as shown
in Theorem \ref{qs-diff} and Proposition \ref{diff-qs}.

However, the order of the gauge function is reduced to $\alpha/2$ from the decay order $\alpha$ of $\mu_f$
according to Theorem \ref{ac-sym}. Moreover, in the course of transforming the situation from $\H2$ to $\D$,
we need a certain normalization on $g \in \Sym$ to obtain a quantitative estimate.
A summary of these situations is the following:

\begin{lemma}\label{sofar}
For a $K$-quasiconformal self-homeomorphism $f$ of $\D$ with 
complex dilatation $\mu \in \Bel^\alpha_0(\D)$,
its boundary extension $g$ belongs to $\Diff_+^{1+\alpha/2}(\S1)$. 
In addition, under the normalization such as $f(0)=0$ or $g \in \QS_*$, the derivative of $g$ is
uniformly bounded from above and away from $0$. More precisely,
there is a constant
$D=D(\alpha,K,\ell) \geq 1$ depending only on
$\alpha$, $K$ and $\ell$ with $\Vert \mu_f \Vert_{\infty,\alpha} \leq \ell$ such that
$$
\frac{1}{D} \leq \widetilde g'(x) \leq D
$$
for every $x \in \R$.
\end{lemma}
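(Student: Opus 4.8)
The plan is to push the problem from the disk $\D$ onto the upper half-plane $\H2$ via the holomorphic universal cover $u:\H2 \to \D-\{0\}$, $u(z)=e^{2\pi i z}$, and then feed the resulting dilatation bound into Theorem~\ref{ac-sym} followed by Theorem~\ref{qs-diff}. First I would remove the normalization for the membership assertion. If $f(0)\neq 0$, I choose $A \in \Mob(\D)$ with $A(f(0))=0$; then $\hat f=A\circ f$ fixes $0$, is again $K$-quasiconformal, and has $\mu_{\hat f}=\mu_f$, so that $\Vert \mu_{\hat f}\Vert_{\infty,\alpha}\leq \ell$ is unchanged. Its boundary extension is $\hat g=A|_{\S1}\circ g$. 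Since elements of $\Mob(\S1)$ are real-analytic and hence lie in $\Diff^{1+\alpha/2}(\S1)$, and the latter is a group by Proposition~\ref{top}, once $\hat g\in \Diff^{1+\alpha/2}(\S1)$ is established it follows that $g=(A|_{\S1})^{-1}\circ \hat g\in \Diff^{1+\alpha/2}(\S1)$ as well. Hence for the first assertion I may assume $f(0)=0$.

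Assuming $f(0)=0$, I lift $f$ to $F=u^{-1}\circ f\circ u:\H2\to\H2$ with $F(\infty)=\infty$. Because $u$ is conformal, the modulus of the complex dilatation is preserved, $|\mu_F(z)|=|\mu(u(z))|$, and using $1-|u(z)|=1-e^{-2\pi y}\leq 2\pi y$ together with $\Vert\mu\Vert_{\infty,\alpha}\leq\ell$ gives
$$
|\mu_F(z)|\leq \ell\,(1-e^{-2\pi y})^\alpha\leq \ell\,(2\pi)^\alpha y^\alpha=:\widetilde\varepsilon(y)\qquad(z=x+iy),
$$
with $\widetilde\varepsilon(y)\to 0$ as $y\to 0$. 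Theorem~\ref{ac-sym} then applies to $F$, whose boundary extension is a lift $\widetilde g$ of $g$, and yields a gauge function satisfying, for $0<t<1/2$,
$$
\varepsilon(t)\leq c(K)\,\widetilde\varepsilon(\sqrt t)+R\sqrt t=c(K)\,\ell\,(2\pi)^\alpha t^{\alpha/2}+R\,t^{1/2}\leq b\,t^{\alpha/2},
$$
where $b=c(K)\,\ell\,(2\pi)^\alpha+R$ and I used $t^{1/2}\leq t^{\alpha/2}$ for $t\in(0,1)$ since $\alpha/2<1/2$. Thus $m_{\widetilde g}(x,t)^{\pm1}\leq 1+b\,t^{\alpha/2}$, so Theorem~\ref{qs-diff} with exponent $\alpha/2$ gives $g\in\Diff^{1+\alpha/2}(\S1)$ together with a bound $1/D\leq \widetilde g'(x)\leq D$ in which $D$ depends only on $b$, hence only on $\alpha$, $K$ and $\ell$. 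This settles both assertions in the case $f(0)=0$.

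It remains to obtain the derivative bound under the alternative normalization $g\in\QS_*$, where $f$ need not fix $0$ but $g$ fixes $1,i,-1$. The point I would exploit is that the family of $K$-quasiconformal automorphisms of $\D$ whose boundary values fix these three points is compact in the uniform topology on $\overline{\D}$; by Mori's theorem and normality no degeneration occurs, so $|f(0)|\leq r_0(K)<1$ for a radius depending only on $K$. Taking $A\in\Mob(\D)$ with $A(f(0))=0$ as above, the distortion of $A|_{\S1}$ and of its inverse is then controlled by $r_0(K)$, whence the lift of $(A|_{\S1})^{-1}$ has derivative bounded above and away from $0$ by constants depending only on $K$. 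Applying the already established $f(0)=0$ case to $\hat f=A\circ f$ bounds the lift $\widetilde{\hat g}$ of $\hat g=A|_{\S1}\circ g$, and the chain rule for $\widetilde g=\widetilde{A^{-1}}\circ\widetilde{\hat g}$ transfers the two-sided bound to $\widetilde g'$ with a constant $D(\alpha,K,\ell)$.

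The main obstacle I anticipate is not the chain of estimates, which follow fairly mechanically from Theorems~\ref{ac-sym} and~\ref{qs-diff}, but keeping all constants explicitly dependent only on $\alpha$, $K$ and $\ell$. In particular the degradation of the exponent from $\alpha$ to $\alpha/2$ is forced by the $\sqrt t$ term in Theorem~\ref{ac-sym} and appears unavoidable with this covering-space method; and the compactness argument bounding $|f(0)|$, together with the resulting uniform control of the M\"obius correction $A$, is the step that requires the most care to make quantitative.
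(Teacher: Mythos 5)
Your proposal is correct and follows essentially the same route as the paper: lift $f$ (after arranging $f(0)=0$ by a M\"obius correction) to $F$ on $\H2$ via $u(z)=e^{2\pi iz}$, obtain $|\mu_F(z)|\leq (2\pi)^\alpha\ell\,y^\alpha$, feed this into Theorem~\ref{ac-sym} to get $m_{\widetilde g}(x,t)^{\pm1}\leq 1+bt^{\alpha/2}$ with $b=b(K,\ell)$, and conclude with Theorem~\ref{qs-diff}. The only cosmetic difference is that you justify $|f(0)|\leq r(K)$ by a normal-families argument where the paper invokes Proposition~\ref{Teich} (Teich\-m\"ul\-ler's cross-ratio distortion theorem); both yield the same uniform control of the M\"obius correction.
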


\begin{proof}
We assume that $f$ fixes $0$. In this case, $f$ lifts to 
the quasiconformal self-homeo\-morphism $F$ of $\H2$ under the holomorphic universal cover $u:\H2 \to \D-\{0\}$
$(z \mapsto \zeta=e^{2\pi iz})$. 
The complex dilatation of $F$ satisfies 
$$
|\mu_{F}(z)|=|\mu(\zeta)| \leq (1-|\zeta|)^\alpha \ell \leq (2\pi)^\alpha \ell y^\alpha \quad (z=x+iy \in \H2).
$$
Then, Theorem \ref{ac-sym} is applied for 
$\widetilde \varepsilon(y)=(2\pi)^\alpha \ell y^{\alpha}$ to verify that the quasisymmetry quotient of $\widetilde g:\R \to \R$,
which is the boundary extension of $F$ as well as the lift of $g$, satisfies 
$$
m_{\widetilde g}(x,t)^{\pm 1} \leq 1+c \widetilde \varepsilon(t^{1/2}) +R t^{1/2} \leq 1+b t^{\alpha/2}
$$
for every $x \in [0,1)$ and every $t \in (0,1/2]$,
where $b=b(K,\ell)>0$ is a constant depending only on $K$ and $\ell$. 
Then, Theorem \ref{qs-diff} asserts that $g$ belongs to $\Diff_+^{1+\alpha/2}(\S1)$. 
Moreover, the derivative $\widetilde g'(x)$ is estimated in terms of $\alpha$ and $b$ by the same theorem.

For a general $f$ not necessarily fixing $0$, we take $\phi \in \Mob(\D)$ such that $\phi \circ f(0)=0$.
The complex dilatation of $\phi \circ f$ is the same as that of $f$. Then, we can apply the previous argument
to $\phi \circ f$; we obtain $\phi \circ g \in \Diff_+^{1+\alpha/2}(\S1)$, where the same symbol $\phi \in \Mob(\S1)$ denotes the
boundary extension of $\phi \in \Mob(\D)$. This in particular shows that $g$ itself
belongs to $\Diff_+^{1+\alpha/2}(\S1)$. Moreover, if $g$ is normalized, Proposition \ref{Teich} below
shows that $|f(0)| \leq r$ for some $r=r(K) \in [0,1)$. Then, $\phi$ satisfies
$$
\frac{1-r}{1+r} \leq |\phi'(z)| \leq \frac{1+r}{1-r} \quad (z \in \overline{\D}). 
$$
From the uniform boundedness of $(\widetilde{\phi \circ g})'(x)$ by the previous argument, we also see that
$\widetilde g'(x)$ is uniformly bounded from above and away from $0$.
\end{proof}

We often 
compare the condition $f(0)=0$ with our normalization fixing $1$, $i$, and $-1$ for $f=f^\mu \in \QC(\D)$.
The following proposition ensures that their differences are 
small.

\begin{proposition}\label{Teich}
There is a constant $r=r(K) \in [0,1)$ depending only on $K$ such that
every $K$-quasiconformal homeomorphism $f \in \QC(\D)$ fixing $1$, $i$, and $-1$ 
satisfies $|f(0)| \leq r$.
\end{proposition}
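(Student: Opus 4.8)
The plan is to prove Proposition \ref{Teich} by a normal family (compactness) argument. Consider the family $\mathcal{F}_K$ of all $K$-quasiconformal automorphisms $f$ of $\D$ fixing the three boundary points $1$, $i$, $-1$. Every $f \in \mathcal{F}_K$ is an automorphism of $\D$, so automatically $f(0) \in \D$, i.e. $|f(0)| < 1$; the substance of the proposition is that this bound is \emph{uniform} over the family. The natural route to uniformity is to show that $\mathcal{F}_K$ is compact and that the evaluation $f \mapsto f(0)$ is continuous with values in the \emph{open} disk $\D$, so that its image is a compact subset of $\D$ and its supremum of moduli is strictly less than $1$.

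First I would reduce to the classical compactness theorem on the sphere. Each $f \in \mathcal{F}_K$ extends by reflection across $\S1$, via $w \mapsto 1/\overline{f(1/\bar w)}$ on $\D^*$, to a map $\hat f$ of $\Chat$ that agrees with $f$ on $\S1$, preserves $\S1$ (hence $\D$ and $\D^*$), and still fixes $1$, $i$, $-1$. Reflection is anticonformal and does not increase the maximal dilatation, so $\hat f$ is again $K$-quasiconformal. This places the problem in the setting of the normal family theorem for $K$-quasiconformal automorphisms of $\Chat$ normalized at three points: any sequence of such maps has a subsequence converging uniformly with respect to the spherical metric to a $K$-quasiconformal automorphism of $\Chat$ fixing the same three points (see Lehto \cite{Leh}).

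Then I would argue via the supremum. Set $r = \sup_{f \in \mathcal{F}_K} |f(0)|$ and pick a sequence $f_n \in \mathcal{F}_K$ with $|f_n(0)| \to r$. Passing to the reflected maps $\hat f_n$ and applying the compactness theorem, extract a subsequence with $\hat f_n \to \hat f$ uniformly on $\Chat$, where $\hat f$ is a $K$-quasiconformal automorphism of $\Chat$ fixing $1$, $i$, $-1$. Since each $\hat f_n$ maps $\S1$ onto itself and $\hat f$ is a homeomorphism, we get $\hat f(\S1) = \S1$; as $\hat f$ preserves orientation and fixes three positively ordered points of $\S1$, it maps $\D$ onto $\D$. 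Hence $\hat f$ restricts to an element of $\mathcal{F}_K$, so $\hat f(0) \in \D$, and by uniform convergence $|f_n(0)| = |\hat f_n(0)| \to |\hat f(0)| < 1$. Therefore $r = |\hat f(0)| < 1$, and $r$ depends only on $K$, which is the assertion.

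The main obstacle is the non-degeneracy of the limit: a priori a limit of $K$-quasiconformal maps could be constant, in which case $\hat f(0)$ might land on $\S1$ and the argument would collapse. This is exactly where the three-point normalization is indispensable, and it is precisely what the classical sphere compactness theorem supplies, since three fixed points prevent the family from degenerating. I would emphasize that the role of fixing $1$, $i$, $-1$ (rather than fewer points) is to force the limit to be a genuine automorphism, guaranteeing that $|\hat f(0)|$ stays strictly below $1$.
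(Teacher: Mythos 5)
Your proof is correct, but it takes a genuinely different route from the paper. The paper argues quantitatively via Teich\-m\"ul\-ler's distortion theorem for cross-ratios: it bounds the hyperbolic distance in $\C-\{0,1\}$ between $[0,\infty,z_3,z_4]$ and its image under $f$ by $\log K$, with $z_3,z_4$ ranging over pairs from $\{1,i,-1\}$, and concludes that $f(0)$ is kept away from $\S1$ except possibly near the fixed points, which the three choices of pairs then exclude. Your argument instead reflects $f$ to a $K$-quasiconformal automorphism of $\Chat$ fixing three points and invokes the classical compactness theorem for such normalized families; the only points needing care are that the limit of the maximizing sequence cannot degenerate (guaranteed by the three fixed values) and that the limit map, preserving $\S1$ and sending the interior point $0$ off $\S1$, must map $\D$ onto $\D$ so that $|\hat f(0)|<1$ --- both of which you address. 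The trade-off is the usual one: the paper's cross-ratio argument is in principle effective and could be made to produce an explicit $r(K)$, whereas your normal-family argument is softer and yields only the existence of $r(K)<1$; since the proposition is used elsewhere in the paper only qualitatively (the constant $r$ enters later estimates only through its existence), your version suffices for every application made of it.
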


\begin{proof}
We assume that $f \in \QC(\D)$ extends to the quasiconformal self-homeo\-morphism of $\Chat$ by reflection with respect to $\S1$. 
The distortion theorem for cross ratio due to Teich\-m\"ul\-ler (see Section III.D of \cite{Ah0} and \cite{Kra}) implies that
for any four distinct points $z_1,z_2,z_3,z_4 \in \Chat$, the hyperbolic distance between
the cross ratios $[z_1,z_2,z_3,z_4]$ and $[f(z_1),f(z_2),f(z_3),f(z_4)]$
in $\C-\{0,1\}$ is bounded by $\log K$.
We choose $z_1=0$ and $z_2=\infty$. If we choose two distinct points from $\{1,i,-1\}$
for $z_3$ and $z_4$, we see that $f(0)=f(\infty)^*$ cannot be close to $\S1$ except in some neighborhoods of
$z_3$ and $z_4$ within a distance depending only on $K$. 
By considering all such choices from $\{1,i,-1\}$, we obtain the assertion.
\end{proof}

The full converse of Theorem \ref{diff-qc} should be a statement that if the complex dilatation $\mu_f$ of
$f \in \AC(\D)$ is in $\Bel^\alpha_0(\D)$, then the boundary extension $g$ of $f$ 
belongs to $\Diff_+^{1+\alpha}(\S1)$ for the same $\alpha$. We will prove this, which is the improvement of
the weaker consequence $g \in \Diff_+^{1+\alpha/2}(\S1)$ in Lemma \ref{sofar}.
We also do this quantitatively.
The claim on the derivative of $g$ in this lemma is still necessary for the estimation of 
the $C^{1+\alpha}$-modulus $p_{1+\alpha}(g)$ as well as for Theorem \ref{mori} below.

We need distortion estimates of quasiconformal self-homeomorphisms of $\D$,
which are variants of the Mori theorem. The first one is its direct consequence.

\begin{proposition}\label{mori0}
Let $f$ be a $K$-quasiconformal self-homeomorphism of $\D$ with $f(0)=0$.
Then, 
$$
\frac{1}{16^K}(1-|z|)^K \leq 1-|f(z)| \leq 16(1-|z|)^{1/K}
$$
is satisfied for every $z \in \D$. 
\end{proposition}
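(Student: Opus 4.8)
The plan is to read off both inequalities directly from the Mori distortion theorem, exploiting the symmetry between $f$ and $f^{-1}$. Recall Mori's theorem in the form: if $h$ is a $K$-quasiconformal automorphism of $\D$ with $h(0)=0$, then $h$ extends to a homeomorphism of $\overline{\D}$ carrying $\S1$ onto $\S1$ and satisfies $|h(z_1)-h(z_2)|\le 16\,|z_1-z_2|^{1/K}$ for all $z_1,z_2\in\overline{\D}$. The idea is to apply this bound to $f$ itself to get the right-hand inequality, and to $f^{-1}$ (which is again $K$-quasiconformal and fixes $0$) to get the left-hand one.

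First I would establish the upper bound. Fix $z\in\D$, set $w=z/|z|\in\S1$, and note that $|f(w)|=1$ while $|w-z|=1-|z|$. The reverse triangle inequality together with Mori's theorem then gives
$$1-|f(z)|=|f(w)|-|f(z)|\le |f(w)-f(z)|\le 16\,|w-z|^{1/K}=16(1-|z|)^{1/K},$$
which is the right-hand inequality.

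For the left-hand inequality I would run the identical argument for $f^{-1}$. Applying the upper bound just proved to $f^{-1}$ at the point $f(z)$ yields
$$1-|z|=1-|f^{-1}(f(z))|\le 16\,(1-|f(z)|)^{1/K}.$$
Isolating $1-|f(z)|$ by dividing by $16$ and raising to the $K$th power produces a lower bound of the form $c\,(1-|z|)^{K}\le 1-|f(z)|$ with a Mori-type constant $c>0$, i.e.\ the left-hand inequality $\tfrac1{16}(1-|z|)^{K}\le 1-|f(z)|$; note that the power $K$ (rather than $1/K$) and the reciprocal constant are precisely the effect of passing through the inverse map.

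Since the statement is advertised as a direct consequence of Mori's theorem, I do not anticipate any serious obstacle; essentially all of the content lies in the two preliminary facts invoked without comment. The first is that $f$ extends continuously to $\overline{\D}$ with $f(\S1)=\S1$, so that the radial boundary point $w$ genuinely satisfies $|f(w)|=1$. The second is the elementary identity $|w-z|=1-|z|$ for the radial projection, which is exactly what converts the two-point Mori estimate into the boundary-distance estimate we want. The only point demanding a little care is the bookkeeping of exponents and constants under inversion; the symmetric roles of $f$ and $f^{-1}$ (each $K$-quasiconformal and fixing the origin) are what make the two-sided estimate fall out of a single one-sided modulus-of-continuity bound.
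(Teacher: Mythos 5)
Your argument is precisely the paper's proof: Mori's theorem applied to $f$ at the radial boundary point $w=z/|z|$ gives the upper bound, and applying the same bound to the $K$-quasiconformal inverse $f^{-1}$ at $f(z)$ gives the lower one. One small caveat, shared with the paper's own one-line proof: inverting actually produces the constant $16^{-K}$ rather than $16^{-1}$ in the left-hand inequality (and indeed a two-point lower bound with the absolute constant $1/16$ cannot hold for large $K$), but since $K$ is fixed this discrepancy is harmless both here and in the later application in Theorem \ref{mori}.
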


\begin{proof}
The Mori theorem (see Section III.C of \cite{Ah0} and Theorem II.3.2 of \cite{LV}) assert that
$$
|f(w)-f(z)| \leq 16|w-z|^{1/K}
$$
for any $w$ and $z$ in $\overline{\D}$. We choose $w=z/|z| \in \S1$ for every $z \in \D$.
Then, the upper inequality follows from $1-|f(z)| \leq |f(w)-f(z)|$.
Considering $f^{-1}$, we obtain the other inequality.
\end{proof}

We can remove the powers $1/K$ and $K$ in the inequalities of Proposition \ref{mori0} 
if the complex dilatation belongs to our class $\Bel_0^{\alpha}(\D)$.
The following result verifies this, which will be crucial in our arguments.

\begin{theorem}\label{mori}
Let $f^\mu$ be a normalized $K$-quasiconformal self-homeomorphism of $\D$ with 
$\mu \in \Bel_0^\alpha(\D)$ and $\Vert \mu \Vert_{\infty,\alpha} \leq \ell$. 
Then, there is a constant $A=A(\alpha,K,\ell)\geq 1$ depending only on 
$\alpha$, $K$, and $\ell$ such that
$$
\frac{1}{A}(1-|z|) \leq 1-|f^\mu(z)| \leq A(1-|z|)
$$
for every $z \in \D$.
\end{theorem}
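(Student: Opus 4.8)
The guiding principle is that the decay $|\mu(z)| \le \ell(1-|z|)^{\alpha}$ converts the multiplicative (power-$K$) distortion of the Mori theorem (Proposition \ref{mori0}) into a bounded, scale-independent distortion, and that this is possible precisely because $\alpha>0$ makes $\int_0 t^{\alpha}\,\frac{dt}{t}=\frac1\alpha\,(\cdot)^{\alpha}$ convergent --- the same integrability that drives Theorem \ref{basic1} through Lemma \ref{recurrence}. First I would reduce to the case $f^\mu(0)=0$: by Proposition \ref{Teich} the normalized map satisfies $|f^\mu(0)|\le r(K)$, so post-composing with the M\"obius transformation of $\D$ carrying $f^\mu(0)$ to $0$ alters $1-|w|$ only by factors between $(1-r)/(1+r)$ and $(1+r)/(1-r)$, hence by constants depending only on $K$; this changes neither the hypothesis nor the assertion. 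It then suffices to prove, for every $z$ with $|z|=\rho$ close to $1$ and $\delta=1-\rho$, that the image circle $f^\mu(\{|z|=\rho\})$ lies in the annulus $\{1-U\le |w|\le 1-L\}$ with $L\asymp U\asymp\delta$, the comparison constants depending only on $\alpha$, $K$ and $\ell$. Points with $|z|$ bounded away from $1$ are absorbed by Proposition \ref{mori0} and a compactness argument.

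For this I would use a geometric decomposition into collars in the style of Lemma \ref{decomposition}: choose radii $R_n\nearrow 1$ with $1-R_n$ geometric and set $V_n=\{R_n\le |z|\le R_{n+1}\}$, on which $\esssup_{V_n}|\mu|\le \ell(1-R_n)^{\alpha}$, so that $f^\mu|_{V_n}$ is $(1+C\ell(1-R_n)^{\alpha})$-quasiconformal. Two facts are extracted at each scale. The radial scale is controlled by the conformal modulus of the ring $\{R_n\le|z|<1\}$, whose value is $\tfrac1{2\pi}\log(1/R_n)\asymp(1-R_n)$; since a $(1+C\ell(1-R_n)^{\alpha})$-quasiconformal map distorts this modulus by the same factor, the modulus of the image ring $f^\mu(\{R_n\le|z|<1\})$, which has outer boundary $\S1$, stays comparable to $1-R_n$. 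The harder fact is the roundness (bounded eccentricity) of the image circle $f^\mu(\{|z|=R_n\})$: its deviation from concentric with $\S1$ accumulates from the near-conformal distortions of all collars $V_m$ outside it, and is therefore dominated by $\sum_{m\ge n}\ell(1-R_m)^{\alpha}$, a tail of a convergent series because $\alpha>0$. Telescoping yields the product $\prod_m(1+C\ell(1-R_m)^{\alpha})<\infty$, and it is here that the constant $A$ acquires its dependence only on $\alpha$, $K$ and $\ell$, with no surviving power of $K$.

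Both inequalities then follow from this two-sided control: combining the modulus estimate (correct radial scale $\asymp\delta$) with the bounded eccentricity ($U/L$ bounded) gives $L\asymp U\asymp\delta$, so that $1-|f^\mu(z)|\asymp\delta$ for every $z$ on the circle. I would extract the lower and upper bounds simultaneously from this estimate rather than by passing to $f^{-1}$, since it is not yet known at this stage that $\mu_{f^{-1}}$ lies in $\Bel^\alpha_0(\D)$ --- that is precisely the consequence one wishes to deduce from the present theorem. The main obstacle is the uniform roundness bookkeeping: a quasiconformal map with small dilatation is only \emph{nearly} conformal, and I must turn this into an honest, scale-by-scale estimate for the eccentricity of the image circles that (i) keeps those circles round enough for the ring-modulus comparison to pin down the distance to $\S1$, and (ii) contributes at scale $R_n$ a factor of the exact form $1+C\ell(1-R_n)^{\alpha}$ with $C$ independent of $n$ and of the point. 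Assembling these into a convergent product with the stated dependence of $A$, rather than any single modulus estimate, is the real work.
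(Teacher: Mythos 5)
Your reduction to $f^\mu(0)=0$ via Proposition \ref{Teich} matches the paper, and your instinct to avoid $f^{-1}$ is right (the paper also proves both bounds directly). But the argument has a genuine gap exactly where you flag ``the real work'': the roundness of the image circles. A modulus estimate for the ring $f^\mu(\{R_n\le|z|<1\})$ cannot, by itself, control the distance from a point of $f^\mu(\{|z|=R_n\})$ to $\S1$ --- a curve of prescribed ring modulus against $\S1$ may still approach $\S1$ much more closely at some points than at others --- so everything hinges on the eccentricity bound, which you assert but do not prove. The specific claim that the eccentricity at scale $n$ picks up a factor of exactly $1+C\ell(1-R_m)^\alpha$ from each outer collar $V_m$ is not something the near-conformality of $f^\mu|_{V_m}$ delivers for free: if you try to telescope Proposition \ref{mori0} collar by collar, each application contributes the absolute constant $16$ (and an exponent $K_m$ applied to a quantity that is small, not of order $1$), and the infinite product of these constants diverges. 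Some additional, genuinely pointwise input is needed to convert ``small dilatation in the collar'' into ``multiplicative distortion of $1-|w|$ close to $1$,'' and your sketch does not supply it.

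The paper closes this gap by a different route. It uses a single two-factor splitting $f=h_t\circ f_t$ at the scale $\sqrt{t}$ adapted to the point $z$ (with $t=1-|z|$), not a fixed multi-scale collar decomposition. The conformal factor $f_t$ is controlled by the Koebe distortion theorem (Proposition \ref{Koebe}) on the disk of radius $\sqrt t$ centered at $z/|z|$, \emph{combined with} the uniform two-sided bound $D^{-1}\le|f_t'(\xi)|\le D$ on $\S1$ coming from Lemma \ref{sofar} --- that is, from the one-dimensional boundary regularity already established through Carleson's estimates (Theorems \ref{ac-sym} and \ref{qs-diff}). This boundary derivative bound is precisely the ``roundness'' input your proposal is missing. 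The quasiconformal factor $h_t$ has $K_t\le 1+4\ell t^{\alpha/2}$ and is handled by the ordinary Mori estimate, the deviation $(1-|w|)^{K_t-1}$ being absorbed because $t^{\alpha/2}\log t$ is bounded; since this is applied only once, the constant $16$ appears only once. To repair your argument you would either have to import the same boundary derivative bound (at which point the multi-scale decomposition becomes superfluous) or prove a quantitative Teich\-m\"ul\-ler--Wittich--Belinskii-type conformality-at-the-boundary statement with the stated dependence of constants, which is a substantial piece of work in its own right.
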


\begin{proof}
For the moment, 
we prove the inequalities for $f \in \AC(\D)$ with $f(0)=0$,
whose complex dilatation $\mu$ satisfies the same assumption as in the statement. 
Let 
$$
t_0=\min\{(2\ell)^{-2/\alpha},1/4\}>0.
$$
It is easy to show 
the inequalities for $z \in \D$ with $1-|z| \geq t_0$. 
Indeed, using Proposition \ref{mori0}, we have
\begin{align*}
\frac{t_0^K}{16^K}(1-|z|) \leq
\frac{t_0^K}{16^K} \leq \frac{1}{16^K}(1-|z|)^K
\leq 1-|f(z)| \leq 1 \leq t_0^{-1}(1-|z|).
\end{align*}
Thus, we may assume that $1-|z| < t_0$ hereafter.

Let $t=1-|z| <t_0$ for a given point $z \in \D$.
We define a Beltrami coefficient $\mu_t(\zeta)$ by
setting $\mu_t(\zeta) = \mu(\zeta)$ on $\{\zeta \in \D \mid |\zeta| \leq 1-\sqrt{t}\}$ and $\mu_t(\zeta) \equiv 0$ elsewhere.
Let $f_t$ be the quasiconformal self-homeomorphism of $\D$ with the complex dilatation $\mu_t$ and with $f_t(0)=0$.
Let $h_t$ be the quasiconformal self-homeomorphism of $\D$ such that $f = h_t \circ f_t$. 
As
$|\mu(\zeta)| \leq \ell(1-|\zeta|)^\alpha$, 
we see that $|\mu_{h_t}(w)| \leq \ell t^{\alpha/2} < 1/2$ for $w \in \D$, 
which implies that the maximal dilatation $K_t$ of $h_t$ satisfies
$$
\frac{1}{K_t} \geq \frac{1-\ell t^{\alpha/2}}{1+\ell t^{\alpha/2}} \geq 1-2\ell t^{\alpha/2}; \quad
K_t \leq \frac{1+\ell t^{\alpha/2}}{1-\ell t^{\alpha/2}} \leq 1+4 \ell t^{\alpha/2}<3.
$$

First, we apply a distortion theorem to the conformal homeomorphism $f_t(\zeta)$ restricted to 
$|\zeta| >1-\sqrt{t}$.
In fact, we may assume that $f_t$ is a conformal homeomorphism of an annulus $\{1-\sqrt{t}<|\zeta|<1/(1-\sqrt{t})\}$
by the reflection principle. Moreover, $f_t$ is also an $K$-quasiconformal self-homeomorphism of $\D$
whose complex dilatation satisfies $\Vert \mu_{f_t}\Vert_{\infty,\alpha} \leq \ell$ independently of $t$.
Then, we see from Lemma \ref{sofar} that 
there is a constant $D=D(\alpha,K,\ell) \geq 1$
independent of $t$ such that the derivative $f_t'$ satisfies
$D^{-1} \leq |f_t'(\xi)| \leq D$ for every $\xi \in \S1$.

The Koebe distortion theorem (Proposition \ref{Koebe}) in the disk $\Delta(\xi,\sqrt{t})$ of radius $\sqrt{t}$ and center $\xi=z/|z|$ 
yields an upper estimate
$$
1-|f_t(z)| \leq |f_t(z)-f_t(\xi)| \leq
(|f'_t(\xi)|\sqrt{t})\frac{t/\sqrt{t}}{(1-t/\sqrt{t})^2} \leq 4Dt 
$$
if $t<1/4$. A lower estimate is more complicated.
Proposition \ref{Koebe} shows that
$$
|f_t'(z)| \geq |f_t'(\xi)|\frac{1-t/\sqrt{t}}{(1+t/\sqrt{t})^3} \geq \frac{4}{27D}
$$
with $t<1/4$.
We consider the reflection $z^*$ of $z$ with respect to $\S1$. The Koebe distortion theorem
applied after sending $z$ to $\xi$ by a conformal self-homeomorphism of the disk $\Delta(\xi,\sqrt{t})$
(see Corollary 1.5 in \cite{P}) gives 
$$
|f_t(z)-f_t(z^*)| \geq (1-(t/\sqrt{t})^2)(|f_t'(z)|\sqrt{t})\cdot \frac{2(t/\sqrt{t})}{4(1-(t/\sqrt{t})^2)}
\geq \frac{2t}{27D}.
$$
As $f_t(z^*)$ is the reflection of $f_t(z)$ with respect to $\S1$, 
$1-|f_t(z)|$ is nearly a half of $|f_t(z)-f_t(z^*)|$ if it is small; for example,
$1-|f_t(z)| \geq 9|f_t(z)-f_t(z^*)|/20$ if $1-|f_t(z)| \leq 2/11$.
This in particular shows that
$$
1-|f_t(z)| \geq \frac{9}{20}\cdot \frac{2t}{27D} =\frac{t}{30D}\,\left(\leq \frac{2}{11}\right). 
$$

Next, we apply Proposition \ref{mori0} to the quasiconformal self-homeomorphism $h_t$ of $\D$.
It implies that
\begin{align*}
1-|h_t(w)| &\leq 16(1-|w|)^{1/K_t} \leq 16(1-|w|)^{1-2\ell t^{\alpha/2}};\\
1-|h_t(w)| &\geq \frac{1}{16^{K_t}}(1-|w|)^{K_t} \geq \frac{1}{16^3}(1-|w|)^{1+4\ell t^{\alpha/2}}
\end{align*}
for every $w \in \D$. Then, by setting $w=f_t(z)$, we have
$$
\frac{1}{16^3}(t/(30D))^{1+4\ell t^{\alpha/2}} 
\leq 1-|f(z)| \leq 16(4Dt)^{1-2\ell t^{\alpha/2}}.
$$
Dividing these inequalities by $t=1-|z|$ and taking the logarithm, we obtain 
\begin{align*}
-3\log(50D)+4\ell t^{\alpha/2} \log(t/(30D))
&\leq \log \frac{1-|f(z)|}{1-|z|}\\
&\leq \log(64D)-2\ell t^{\alpha/2} \log(4Dt).
\end{align*}
This shows that the middle term is bounded from above and below independently of $t$, and hence
$(1-|f(z)|)/(1-|z|)$ is also bounded from above and away from $0$.
Thus, we can find a constant $A'=A'(\alpha,K,\ell) \geq 1$ such that
$$
\frac{1}{A'}(1-|z|) \leq 1-|f(z)| \leq A'(1-|z|)
$$
for the case of $1-|z| < t_0$ as well as for the previous case $1-|z| \geq t_0$.

Now we consider the normalized quasiconformal homeomorphism $f^\mu \in \QC(\D)$.
Proposition \ref{Teich} asserts that there is $r=r(K) \in [0,1)$ such that
$|f^\mu(0)| \leq r$. We take a M\"obius transformation $\phi \in \Mob(\D)$ such that $\phi \circ f^\mu(0)=0$.
Then, $f=\phi \circ f^\mu$ satisfies the above inequalities. 
Moreover, $|f^\mu(0)| \leq r$
implies that 
$$
\frac{1-r}{1+r} \leq |\phi'(z)| \leq \frac{1+r}{1-r} \quad (z \in \D). 
$$
Because
$$
(\min_{z \in \D}|\phi'(z)|)(1-|f(z)|) \leq 1-|f^\mu(z)| \leq (\max_{z \in \D}|\phi'(z)|)(1-|f(z)|), 
$$
we can choose $A=A'(1+r)/(1-r)$ for the required inequalities, which depends only on $\alpha$, $K$, and $\ell$.
\end{proof}

This theorem has several consequences.

\begin{proposition}\label{group}
For any $\mu$ and $\nu$ in $\Bel_0^\alpha(\D)$, the composition $\mu \ast \nu^{-1}$ also belongs to $\Bel_0^\alpha(\D)$.
Hence, $\Bel_0^\alpha(\D)$ is a subgroup of $\Bel(\D)$. 
\end{proposition}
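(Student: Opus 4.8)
The plan is to compute the complex dilatation of $f^\mu \circ (f^\nu)^{-1}$ directly from the chain-rule formula recorded in Section \ref{2}, and then to transfer the decay estimate from the source variable $z$ to the target variable $\zeta=f^\nu(z)$ by means of Theorem \ref{mori}. First I would write, with $\zeta=f^\nu(z)$,
$$
(\mu \ast \nu^{-1})(\zeta)=\frac{\mu(z)-\nu(z)}{1-\overline{\nu(z)}\mu(z)}\cdot\frac{\partial f^{\nu}(z)}{\overline{\partial f^{\nu}(z)}},
$$
and observe that the last factor is unimodular, so that
$$
|(\mu \ast \nu^{-1})(\zeta)|=\frac{|\mu(z)-\nu(z)|}{|1-\overline{\nu(z)}\mu(z)|}\leq \frac{|\mu(z)|+|\nu(z)|}{1-\Vert \mu \Vert_\infty \Vert \nu \Vert_\infty}.
$$
The denominator is bounded away from $0$ because $\Vert \mu \Vert_\infty,\Vert \nu \Vert_\infty<1$.

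For the numerator, set $\ell=\max\{\Vert \mu \Vert_{\infty,\alpha},\Vert \nu \Vert_{\infty,\alpha}\}$. Since $\rho_\D^{-\alpha}(z)=((1-|z|^2)/2)^\alpha\leq (1-|z|)^\alpha$, both $|\mu(z)|$ and $|\nu(z)|$ are at most $\ell(1-|z|)^\alpha$. Hence there is a constant $C_0$, depending only on $\ell$ and on $\Vert \mu \Vert_\infty \Vert \nu \Vert_\infty$, with
$$
|(\mu \ast \nu^{-1})(\zeta)|\leq C_0\,(1-|z|)^\alpha.
$$

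The decisive step is to re-express $(1-|z|)^\alpha$ in terms of $1-|\zeta|$. Here I would apply Theorem \ref{mori} to the normalized $K$-quasiconformal automorphism $f^\nu$, whose complex dilatation $\nu$ lies in $\Bel_0^\alpha(\D)$ with $\Vert \nu \Vert_{\infty,\alpha}\leq \ell$: there is $A=A(\alpha,K(f^\nu),\ell)\geq 1$ so that $1-|z|\leq A\,(1-|f^\nu(z)|)=A(1-|\zeta|)$ for every $z \in \D$. Substituting gives $|(\mu \ast \nu^{-1})(\zeta)|\leq C_0 A^\alpha (1-|\zeta|)^\alpha$, and multiplying by $\rho_\D^\alpha(\zeta)$ produces a finite bound uniform in $\zeta$ (note $\rho_\D^\alpha(\zeta)(1-|\zeta|)^\alpha\leq 2^\alpha$); since $f^\nu$ maps $\D$ onto $\D$, this shows $\Vert \mu \ast \nu^{-1} \Vert_{\infty,\alpha}<\infty$, i.e.\ $\mu \ast \nu^{-1}\in \Bel_0^\alpha(\D)$. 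I expect this substitution to be the main point: the ordinary Mori estimate of Proposition \ref{mori0} only controls $1-|z|$ by a power $(1-|\zeta|)^{1/K}$, which would degrade the H\"older exponent, whereas it is precisely the linear comparison of Theorem \ref{mori}, valid because $\nu \in \Bel_0^\alpha(\D)$, that preserves the exponent $\alpha$.

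Finally, to deduce that $\Bel_0^\alpha(\D)$ is a subgroup of $\Bel(\D)$, I would invoke the one-step subgroup criterion. The zero Beltrami coefficient lies in $\Bel_0^\alpha(\D)$, so the set is nonempty, and the statement just proved shows it is closed under $(\mu,\nu)\mapsto \mu \ast \nu^{-1}$. Hence it is a subgroup.
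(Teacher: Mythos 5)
Your proposal is correct and follows essentially the same route as the paper: both start from the chain-rule formula for $\mu \ast \nu^{-1}$, note the unimodular factor, and use Theorem \ref{mori} applied to $f^\nu$ to transfer the decay from $z$ to $\zeta=f^\nu(z)$ without degrading the exponent (the paper phrases this as $\rho_{\D}^{\alpha}(\zeta) \leq (2A)^\alpha \rho_{\D}^{\alpha}(z)$ and keeps the sharper bound $\Vert \mu-\nu \Vert_{\infty,\alpha}$ in place of your triangle inequality, but these are cosmetic differences). You also correctly identify the crucial point that Proposition \ref{mori0} alone would only give the exponent $\alpha/K$.
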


\begin{proof}
We apply Theorem \ref{mori} to $\zeta=f^{\nu}(z)$ in the formula
$$
\mu \ast \nu^{-1}(\zeta)=\frac{\mu(z)-\nu(z)}{1-\overline{\nu(z)}\mu(z)}\cdot\frac{\partial f^{\nu}(z)}{\overline{\partial f^{\nu}(z)}}.
$$
Then, $\rho_{\D}^{\alpha}(\zeta) \leq (2A)^\alpha \rho_{\D}^{\alpha}(z)$, from which we have
$$
\Vert \mu \ast \nu^{-1} \Vert_{\infty,\alpha} \leq 
\frac{(2A)^\alpha}{1-\Vert \mu \Vert_\infty \Vert \nu \Vert_\infty} \Vert \mu-\nu \Vert_{\infty,\alpha}.
$$
The statement follows from this inequality.
\end{proof}

\begin{corollary}\label{inverse}
If $\nu \in  \Bel_0^\alpha(\D)$ then $\nu^{-1} \in \Bel_0^\alpha(\D)$. More precisely, every
$\nu \in  \Bel_0^\alpha(\D)$ with $\Vert \nu \Vert_{\infty,\alpha} \leq \ell$ 
and $\Vert \nu \Vert_{\infty} \leq k<1$ 
satisfies
$\Vert \nu^{-1} \Vert _{\infty,\alpha} \leq \widetilde A \Vert \nu \Vert_{\infty,\alpha}$
for a constant $\widetilde A=\widetilde A(\alpha,k,\ell) \geq 1$.
\end{corollary}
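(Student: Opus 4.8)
The plan is to read off the corollary as the special case $\mu=0$ of the inequality already proved in Proposition \ref{group}. Recall that $\mu \ast \nu^{-1}$ is by definition the complex dilatation of $f^\mu \circ (f^\nu)^{-1}$; taking $\mu=0$ gives $f^0=\id$, so $0 \ast \nu^{-1}$ is the complex dilatation of $(f^\nu)^{-1}$, which is precisely $\nu^{-1}$. Since $0 \in \Bel_0^\alpha(\D)$ trivially ($\Vert 0 \Vert_{\infty,\alpha}=0$), Proposition \ref{group} applies with this choice, and its estimate
$$
\Vert \mu \ast \nu^{-1} \Vert_{\infty,\alpha} \leq \frac{(2A)^\alpha}{1-\Vert \mu \Vert_\infty \Vert \nu \Vert_\infty}\Vert \mu-\nu \Vert_{\infty,\alpha}
$$
collapses, using $\Vert 0 \Vert_\infty=0$ and $\Vert 0-\nu \Vert_{\infty,\alpha}=\Vert \nu \Vert_{\infty,\alpha}$, to
$$
\Vert \nu^{-1} \Vert_{\infty,\alpha} \leq (2A)^\alpha \Vert \nu \Vert_{\infty,\alpha}.
$$
This already yields $\nu^{-1} \in \Bel_0^\alpha(\D)$ together with the asserted linear bound, so the only remaining work is to control the constant.

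The constant $A$ above is the one supplied by Theorem \ref{mori} applied to $f^\nu$, which depends on $\alpha$, the maximal dilatation $K=K(f^\nu)$, and the bound $\ell$ for $\Vert \nu \Vert_{\infty,\alpha}$. Because $K(f^\nu)=(1+\Vert \nu \Vert_\infty)/(1-\Vert \nu \Vert_\infty)$ is increasing in $\Vert \nu \Vert_\infty$, the hypothesis $\Vert \nu \Vert_\infty \leq k<1$ gives $K(f^\nu) \leq (1+k)/(1-k)$, a quantity determined by $k$ alone. Hence $A$, and therefore $\widetilde A=(2A)^\alpha$, depends only on $\alpha$, $k$ and $\ell$, which is exactly the claim.

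There is essentially no analytic obstacle to overcome here: all the substance is carried by Theorem \ref{mori}, whose comparability $1-|f^\nu(z)|$ to $1-|z|$ \emph{without} any power of $K$ is what forces the hyperbolic density $\rho_\D^\alpha$ to be distorted under $f^\nu$ only by a $K$-independent multiplicative factor. The one point demanding care is the bookkeeping of constants, namely confirming that the estimate is uniform over the whole class $\{\Vert \nu \Vert_\infty \leq k,\ \Vert \nu \Vert_{\infty,\alpha}\leq \ell\}$ and does not secretly depend on the individual $\nu$. If a self-contained derivation is preferred over citing Proposition \ref{group}, I would instead combine the explicit formula $\nu^{-1}(\zeta)=-\nu(z)\,\partial f^\nu(z)/\overline{\partial f^\nu(z)}$ at $\zeta=f^\nu(z)$, giving $|\nu^{-1}(\zeta)|=|\nu(z)|$, with the density comparison $\rho_\D^\alpha(\zeta) \leq (2A)^\alpha \rho_\D^\alpha(z)$ from Theorem \ref{mori}; multiplying these and taking the essential supremum over $\zeta=f^\nu(z)$ reproduces the same inequality.
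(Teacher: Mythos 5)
Your proposal is correct and is essentially identical to the paper's own proof: the paper likewise sets $\mu=0$ in the inequality from Proposition \ref{group} to get $\Vert \nu^{-1} \Vert_{\infty,\alpha} \leq (2A)^\alpha \Vert \nu \Vert_{\infty,\alpha}$ and then observes that $A$ from Theorem \ref{mori} depends only on $\alpha$, $k$ and $\ell$. Your extra remarks on the uniformity of the constant and the self-contained variant are consistent with, but not needed beyond, what the paper does.
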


\begin{proof}
As a special case of the above inequality by setting $\mu=0$, we have
$$
\Vert \nu^{-1} \Vert_{\infty,\alpha} \leq (2A)^\alpha \Vert \nu \Vert_{\infty,\alpha}.
$$
Then, setting $\widetilde A=(2A)^\alpha$ gives the statement, as $A$ depends only on $\alpha$, $k$, and $\ell$ 
by Theorem \ref{mori}.
\end{proof}

Now we explain the converse of Theorem \ref{diff-qc} as well as other 
equivalent conditions for $g \in \QS$ to belong to $\Diff_+^{1+\alpha}(\S1)$.
We supply the following notation.

\begin{definition}
For a bounded holomorphic quadratic differential $\varphi=\varphi(z)dz^2 \in B(\D^*)$,
we define a new norm by
$$
\Vert \varphi \Vert_{\infty,\alpha}=\sup_{z \in \D^*} \rho^{-2+\alpha}_{\D^*}(z)|\varphi(z)|.
$$
The Banach space of holomorphic quadratic differentials with this norm finite is given by
$$
B_0^{\alpha}(\D^*)=\{\varphi \in B(\D^*) \mid \Vert \varphi \Vert_{\infty,\alpha}<\infty \} \subset B_0(\D^*).
$$
\end{definition}

\begin{theorem}\label{main}
Let $\alpha$ be a constant with $0<\alpha<1$.
For a quasisymmetric homeomorphism $g \in \QS$, the following conditions are equivalent:
\begin{enumerate}
\item
$g$ belongs to $\Diff_+^{1+\alpha}(\S1)$;
\item
there is $\mu \in \Bel_0^\alpha(\D)$ such that $\pi(\mu)=[g] \in T$;
\item
$\beta([g]) \in \beta(T)$ is in $B_0^\alpha(\D^*)$.
\end{enumerate}
\end{theorem}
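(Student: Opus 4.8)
The plan is to treat the three implications separately, reducing two of them to results already in hand and concentrating on the one genuinely new direction. The equivalence $(2)\Leftrightarrow(3)$ requires no new work: it is exactly the content of Theorem \ref{equivalence1}, since condition $(2)$ says that $\pi(\mu)$ has a representative in $\Bel_0^\alpha(\D)$ and condition $(3)$ says that $\sup_{z\in\D^*}\rho_{\D^*}^{-2+\alpha}(z)|S_{f_\mu|_{\D^*}}(z)|<\infty$, which are the first and third items of that theorem. Likewise $(1)\Rightarrow(2)$ is immediate from Theorem \ref{diff-qc}, which produces for each $g\in\Diff^{1+\alpha}(\S1)$ an asymptotically conformal extension with dilatation in $\Bel_0^\alpha(\D)$. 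Thus everything comes down to the converse $(2)\Rightarrow(1)$: if some $\mu\in\Bel_0^\alpha(\D)$ satisfies $\pi(\mu)=[g]$, then $g\in\Diff^{1+\alpha}(\S1)$.

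For this direction the direct attempt, Lemma \ref{sofar}, yields only $g\in\Diff^{1+\alpha/2}(\S1)$, because the passage to $\H2$ through the Beurling--Ahlfors route loses a square root in the exponent (the gauge function in Theorem \ref{ac-sym} behaves like $\widetilde\varepsilon(\sqrt t)+\sqrt t$). To recover the full exponent I would instead represent $g$ by conformal welding, working with genuinely conformal maps whose Schwarzians lie in $B_0^\alpha(\D^*)$ and hence suffer no such loss. Concretely, extend $\mu$ by zero to $\D^*$ and let $f_\mu:\Chat\to\Chat$ be the normalized solution, so that $\Phi_-:=f_\mu|_{\D^*}$ is conformal onto the exterior of the quasicircle $\Gamma=f_\mu(\S1)$ while $\Phi_+:=f_\mu|_{\D}\circ(f^\mu)^{-1}$ is conformal onto its interior; a direct computation on $\S1$ gives the welding identity $g=\Phi_+^{-1}\circ\Phi_-$.

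The regularity of each factor is then read off from its Schwarzian. For $\Phi_-$ we have $S_{\Phi_-}\in B_0^\alpha(\D^*)$ by Theorem \ref{equivalence1}, equivalently (Theorem \ref{basic1}) $\rho_{\D^*}^{-1}(z)|T_{\Phi_-}(z)|=O((|z|-1)^\alpha)$; integrating the pre-Schwarzian along radii shows that $\log\Phi_-'$ extends continuously to $\overline{\D^*}$ with $\alpha$-H\"older boundary values, so $\Phi_-'$ is bounded away from $0$ and $\infty$ and $\Phi_-|_{\S1}$ is a $C^{1+\alpha}$ parametrization of $\Gamma$. The factor $\Phi_+$ is where Theorem \ref{mori} enters: by that theorem $1-|f^\mu(z)|$ is comparable to $1-|z|$, so the dilatation $\nu$ of $(f^\mu)^{-1}$ again lies in $\Bel_0^\alpha(\D)$ (Corollary \ref{inverse}). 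Applying the same construction to $\nu$, that is, to $g^{-1}$, and using that interchanging $g$ and $g^{-1}$ interchanges the interior and exterior welding maps, one controls $\Phi_+$ by the exterior map of the welding of $g^{-1}$, whose Schwarzian is again in $B_0^\alpha(\D^*)$; hence $\Phi_+|_{\S1}$ is also a $C^{1+\alpha}$ parametrization of $\Gamma$. Finally $g=\Phi_+^{-1}\circ\Phi_-$ is a composition of $C^{1+\alpha}$ maps with nonvanishing derivative, so $g\in\Diff^{1+\alpha}(\S1)$ by the chain rule (cf. Proposition \ref{partial}).

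The main obstacle is the control of the interior map $\Phi_+$: unlike $\Phi_-$, its Schwarzian is not handed to us by $\mu$, and the quasicircle $\Gamma$ is not smooth. The device that removes this obstacle is Theorem \ref{mori}, which guarantees that $f^\mu$ and $(f^\mu)^{-1}$ have $\Bel_0^\alpha$-dilatations simultaneously and thereby lets the welding of $g^{-1}$ supply the missing estimate; this is precisely the step that upgrades the exponent from $\alpha/2$ to $\alpha$. Alternatively, one may invoke the qualitative converse of Dyn$'$kin \cite{Dy} and Anderson, Cant\'on and Fern\'andez \cite{ACF} for the non-quantitative statement, but the welding argument is the form of the proof that the quantitative theory of the following sections requires.
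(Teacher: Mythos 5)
Your proposal is correct and follows essentially the same route as the paper: the equivalence $(2)\Leftrightarrow(3)$ is delegated to Theorem \ref{equivalence1}, the implication $(1)\Rightarrow(2)$ to Theorem \ref{diff-qc}, and the remaining implication $(2)\Rightarrow(1)$ is proved exactly as in the paper's Theorem \ref{key}, by the conformal welding $g=f^{-1}\circ f_\mu$ with Theorem \ref{mori} and Corollary \ref{inverse} supplying the $\Bel_0^\alpha$ control of the dilatation of $(f^\mu)^{-1}$ (hence of the interior factor via Corollary \ref{interior}), and the pre-Schwarzian estimates of Theorem \ref{basic1} integrated along boundary paths giving the $C^{1+\alpha}$ regularity of both welding factors. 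The only minor imprecision is that radial integration alone gives the continuous nonvanishing boundary extension of $\log f_\mu'$; the $\alpha$-H\"older modulus of the boundary values requires integrating along the radial--arc--radial path as in the paper, but this is the standard step and does not affect correctness.
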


\begin{proof}
The implication $(1) \Rightarrow (2)$ is a reformulation of Theorem \ref{diff-qc}.
This was essentially proved by Carleson \cite{Car}. 
The equivalence $(2) \Leftrightarrow (3)$ has been reviewed in Theorem \ref{equivalence1},
where previous contributions to this equivalence are also mentioned.
We note that $(1) \Rightarrow (3)$ was also proved in
Tam and Wan \cite{TW} by using the harmonic extension of diffeomorphisms of $\S1$.
On the contrary, the converse $(2) \Rightarrow (1)$ was given in
Dyn$'$kin \cite{Dy}
based on his results on the pseudo\-analytic extension of
differentiable functions and independently in
Anderson, Cant\'on, and Fern\'andez \cite{ACF}, who relied on
a certain approximation theorem of quasiconformal maps on the disk by polynomials.
Theorem \ref{key} below proves $(2) \Rightarrow (1)$ in complex analytic methods and 
provides necessary results for our theorems on the Teich\-m\"ul\-ler space.
\end{proof}

For  
later purposes, we prepare the proposition that follows next.
We will use it for both $\mu \in \Bel^\alpha_0(\D)$ and its reflection $\mu^*$.
According to the different assumptions that we will impose on them, we address both cases separately. 

\begin{proposition}\label{distortion}
$(1)$ Let $f$ be a conformal homeomorphism of $\D^*$ 
with $f(\infty)=\infty$ and 
$\lim_{z \to \infty}f'(z)=1$
whose quasiconformal extension to $\D$
has the complex dilatation $\mu$
in $\Bel_0^\alpha(\D)$ with $\Vert \mu \Vert_{\infty,\alpha} \leq \ell$.
Then, there is a constant $B=B(\alpha,\ell) \geq 1$  
such that
$$
\frac{1}{B} \leq |f'(z)| \leq B
$$
for every $z \in \D^*$. $(2)$ Let $f$ be a conformal homeomorphism of $\D$ 
with $e^{-s} \leq |f'(0)| \leq e^s$
whose quasiconformal extension to $\D^*$
has the complex dilatation $\mu^*$
for $\mu \in \Bel_0^\alpha(\D)$ with $\Vert \mu \Vert_{\infty,\alpha} \leq \ell$.
Then, there is a constant $B'=B'(\alpha,\ell,s) \geq 1$ 
such that
$$
\frac{1}{B'} \leq |f'(z)| \leq B'
$$
for every $z \in \D$. 
\end{proposition}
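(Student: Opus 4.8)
The plan is to recover $f'$ from its logarithmic derivative, the pre-Schwarzian $T_f=f''/f'=(\log f')'$, since this is precisely the object bounded in Theorem \ref{basic1} and Corollary \ref{interior}. On each of the simply connected domains $\D^*$ and $\D$ the map $f'$ is nonvanishing, so a single-valued holomorphic branch of $\log f'$ exists, and the difference of its values between two points is the line integral of $T_f$. I would anchor this branch at the normalization point (at $\infty$ in part $(1)$, at $0$ in part $(2)$), integrate $T_f$ outward along radial rays, bound the resulting integral by a constant that depends only on the admitted parameters, and finally take real parts and exponentiate, using $\bigl|\log|f'|\bigr|\le|\log f'|$.

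For part $(1)$ the conditions $f(\infty)=\infty$ and $\lim_{z\to\infty}f'(z)=1$ force $f'=f_\mu'$ on $\D^*$ (the only remaining freedom being an additive constant), so $\log f'(\infty)=0$ and Theorem \ref{basic1} gives $|T_f(z)|\le C\ell\,\rho_{\D^*}(z)(|z|-1)^\alpha=2C\ell\,(|z|-1)^{\alpha-1}/(|z|+1)$ with $\ell=\Vert\mu\Vert_{\infty,\alpha}$. Writing $\log f'(z)=-\int_z^\infty T_f(w)\,dw$ along the ray to $\infty$ and substituting $w=\rho e^{i\theta}$ yields
$$
|\log f'(z)|\le\int_{|z|}^\infty|T_f(\rho e^{i\theta})|\,d\rho\le 2C\ell\int_1^\infty\frac{(\rho-1)^{\alpha-1}}{\rho+1}\,d\rho.
$$
The last integral converges because $0<\alpha<1$ makes the singularity at $\rho=1$ integrable and the tail $\rho^{\alpha-2}$ integrable at $\infty$, and its value depends only on $\alpha$. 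Hence $|\log f'(z)|$ is bounded by a quantity of the form $C(\alpha)\ell$, and $B=\exp(C(\alpha)\ell)$ works uniformly in $z$.

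Part $(2)$ is the same argument anchored at the origin. The pre-Schwarzian of $f$ on $\D$ coincides with the quantity $T_{f_{\mu^*}|_{\D}}$ estimated in Corollary \ref{interior}, since $f$ differs from the normalized map only by an affine post-composition fixing $\infty$, under which $T_f$ is invariant; thus $|T_f(\zeta)|\le C'\ell\,(1-|\zeta|)^{\alpha-1}$ for every $\zeta\in\D$. Integrating from $0$ along the radial ray gives
$$
\bigl|\log f'(\zeta)-\log f'(0)\bigr|\le\int_0^{|\zeta|}|T_f(\rho e^{i\theta})|\,d\rho\le C'\ell\int_0^1(1-\rho)^{\alpha-1}\,d\rho=\frac{C'\ell}{\alpha}.
$$
Combining this with the hypothesis $\bigl|\log|f'(0)|\bigr|\le s$ yields $\bigl|\log|f'(\zeta)|\bigr|\le s+C'\ell/\alpha$, so $B'=\exp(s+C'\ell/\alpha)$ depends only on $\alpha$, $\ell$ and $s$.

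The two elementary integrals and their convergence are routine; the genuinely delicate point is making sure, in part $(2)$, that the quantity controlled by Corollary \ref{interior} is indeed the pre-Schwarzian of the given $f$. Here the roles of the hypotheses become transparent: an interior pre-Schwarzian bound determines $f'$ only up to a multiplicative constant, namely its value at one point, and the condition $e^{-s}\le|f'(0)|\le e^s$ supplies exactly that missing constant, while fixing $\infty$ reduces the M\"obius ambiguity of the extension to affine maps and so preserves $T_f$. This is why $B'$, unlike $B$, must be allowed to depend on $s$.
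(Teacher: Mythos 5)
Your proof is correct and takes essentially the same route as the paper's: both parts recover $\log f'$ by bounding the pre-Schwarzian $T_f=(\log f')'$ via Theorem \ref{basic1} (resp.\ Corollary \ref{interior}) and integrating it along radial segments from the normalization point, the convergent integrals $\int_1^\infty(\rho-1)^{\alpha-1}(\rho+1)^{-1}d\rho$ and $\int_0^1(1-\rho)^{\alpha-1}d\rho$ supplying constants depending only on $\alpha$ (and $s$ in part $(2)$). The only cosmetic difference is that the paper integrates out to $\S1$ and then invokes the maximum principle to cover interior points, whereas you integrate directly to each point; your closing remark on the affine Möbius ambiguity matches the identification the paper makes implicitly.
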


\begin{proof}
(1) By Theorem \ref{basic1}, there is a constant $L=L(\alpha,\ell) \geq 0$ such that
$\beta_{\mu}(t) \leq 2Lt^\alpha/(t+2)$.
Because 
$$
\left|\frac{f''(z)}{f'(z)}\right| \leq \frac{\beta_{\mu}(t)}{t}
$$ 
for $t=|z|-1$, the integration along the radial segment connecting $(1+t)\xi$ and $\xi$
for any $\xi \in \S1$ gives
$$
\int^{(1+t)\xi}_{\xi} \left|\frac{d}{dz} \log f'(z)\right ||dz|
\leq L \int_{0}^{t}\frac{2t^{\alpha-1}}{t+2}dt.
$$ 
The right side term is bounded by $Lt^\alpha/\alpha$, which
implies that $\log f'$ extends continuously to $\S1$ (see Theorem 4.1 in Pommerenke and Warschawski \cite{PW}).
Moreover, by taking the limit as $t \to \infty$, we obtain
$$
|\log f'(\xi)| \leq \frac{L}{\alpha}+2L\int_1^\infty t^{\alpha-2}dt
=\frac{L}{\alpha}+\frac{2L}{1-\alpha}
$$
for every $\xi \in \S1$.
Then, the maximal principle yields that $|\log f'(z)| \leq 2L/(\alpha(1-\alpha))$ for every $z \in \D^*$. Hence,
by taking $B=\exp(2L/(\alpha(1-\alpha)))$, we obtain the assertion.

(2) By Corollary \ref{interior}, there is a constant $L'=L'(\alpha,\ell) \geq 0$ such that
$\bar \beta_{\mu^*}(t) \leq L't^\alpha$.
Because 
$$
\left|\frac{f''(z)}{f'(z)}\right| \leq \frac{\bar \beta_{\mu^*}(t)}{t}
$$ 
for $t=1-|z|$, the integration along the radial segment connecting $(1-t)\xi$ and $\xi$
for any $\xi \in \S1$ gives
$$
\int^{\xi}_{(1-t)\xi} \left|\frac{d}{dz} \log f'(z)\right ||dz|
\leq L' \int_{0}^{t}t^{\alpha-1}dt
=\frac{L't^\alpha}{\alpha}.
$$
Similarly to the above,
$\log f'$ extends continuously to $\S1$. By taking $t=1$, we obtain
$$
|\log f'(\xi)-\log f'(0)| \leq \frac{L'}{\alpha}
$$
for every $\xi \in \S1$. 
Then, the maximal principle yields that $|\log f'(z)-\log f'(0)| \leq L'/\alpha$ for every $z \in \D$. 
As $-s \leq \log |f'(0)| \leq s$, we have that $|\log |f'(z)|| \leq L'/\alpha+s$; hence
by taking $B'=\exp(L'/\alpha+s)$, we obtain the assertion.
\end{proof}

\begin{theorem}\label{key}
If $\mu \in \Bel_0^\alpha(\D)$, then $g \in \QS$ with $\pi(\mu)=[g]$
belongs to $\Diff_+^{1+\alpha}(\S1)$.
Moreover, if $g$ is normalized $(g \in \QS_*)$, then $p_{1+\alpha}(g) \to 0$ quantitatively as
$\Vert \mu \Vert_{\infty,\alpha} \to 0$.
\end{theorem}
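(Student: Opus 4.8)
The plan is to represent $g$ by conformal welding, following Anderson, Becker and Lesley \cite{ABL}, and to read off the $C^{1+\alpha}$-regularity of $g$ from the boundary regularity of the two conformal factors. First I would fix the representative $g=f^\mu|_{\S1}$ and form the global solution $f_\mu:\Chat\to\Chat$ of the Beltrami equation for $\mu$ extended by $0$ on $\D^*$, normalized by $f_\mu(\infty)=\infty$ and $(f_\mu)'(\infty)=1$. Writing $\Gamma=f_\mu(\S1)$, $\Omega=f_\mu(\D)$ and $\Omega^*=f_\mu(\D^*)$, the exterior factor $F:=f_\mu|_{\D^*}:\D^*\to\Omega^*$ is conformal, and the interior factor $G:=f_\mu|_{\D}\circ (f^\mu)^{-1}:\D\to\Omega$ is conformal as well, since the dilatations of $f_\mu|_{\D}$ and $f^\mu$ coincide. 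A direct computation on $\S1$, using $g=f^\mu|_{\S1}$, gives $G\circ g=F$, that is $g=(G|_{\S1})^{-1}\circ(F|_{\S1})$. Thus it suffices to show that $F$ and $G$ extend to $C^{1+\alpha}$ diffeomorphisms of $\overline{\D^*}$ and $\overline{\D}$ with non-vanishing boundary derivatives; then $\Gamma$ is a $C^{1+\alpha}$ Jordan curve, $F|_{\S1}$ and $G|_{\S1}$ are $C^{1+\alpha}$ diffeomorphisms onto $\Gamma$, and $g$ is their $C^{1+\alpha}$ composition.

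For the exterior factor $F$, its quasiconformal extension to $\Chat$ is $f_\mu$ itself, whose complex dilatation on $\D$ is $\mu\in\Bel_0^\alpha(\D)$. Proposition \ref{distortion}(1) bounds $|F'|$ from above and away from $0$ on $\D^*$, while the estimate of Theorem \ref{basic1} bounds the pre-Schwarzian by $|T_F(z)|\lesssim (|z|-1)^{\alpha-1}$; integrating along radii as in the proof of Proposition \ref{distortion} (via \cite[Theorem 4.1]{PW}) then shows that $\log F'$ extends $\alpha$-H\"older continuously to $\S1$, so $F\in C^{1+\alpha}(\overline{\D^*})$ with $F'\neq 0$ on $\S1$. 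The interior factor $G$ is the delicate one: it is conformal on $\D$, so to control $T_G$ on $\D$ I would use its quasiconformal extension to $\D^*$. Reflecting $f^\mu$ across $\S1$ to a quasiconformal automorphism $\widehat{f^\mu}$ of $\Chat$ and setting $f_\mu\circ(\widehat{f^\mu})^{-1}$, one checks this extends $G$ and that its dilatation on $\D^*$ equals that of $(\widehat{f^\mu})^{-1}$, whose modulus at $w=\widehat{f^\mu}(z)$ equals $|\mu(z^*)|\le \ell(1-|z^*|)^\alpha\asymp \ell(|z|-1)^\alpha$. Here Theorem \ref{mori} enters decisively: applied to $f^\mu$ it gives $1-|f^\mu(z^*)|\asymp 1-|z^*|$, hence $|w|-1\asymp |z|-1$ with constants depending only on $\alpha$, $K$ and $\ell$, so that the dilatation of the extension of $G$ lies in $\Bel_0^\alpha(\D^*)$. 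Corollary \ref{interior} then yields $\bar\beta\lesssim t^\alpha$, and Proposition \ref{distortion}(2) together with the same radial integration gives $G\in C^{1+\alpha}(\overline{\D})$ with $G'\neq 0$ on $\S1$; the normalization $e^{-s}\le|G'(0)|\le e^s$ needed there follows since $G'(0)$ is controlled by $K$ and $\ell$.

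Combining the two factors, $\Gamma$ is $C^{1+\alpha}$ and $g=(G|_{\S1})^{-1}\circ(F|_{\S1})$ is a $C^{1+\alpha}$ diffeomorphism of $\S1$, proving $g\in\Diff^{1+\alpha}(\S1)$. For the quantitative addendum, I would track the constants: as $\Vert\mu\Vert_{\infty,\alpha}\to 0$ we may take $\ell\to 0$ and simultaneously $\Vert\mu\Vert_\infty\to 0$, so $K\to 1$; then the constants in Theorem \ref{basic1}, Corollary \ref{interior}, Theorem \ref{mori} and Proposition \ref{distortion} force the $\alpha$-H\"older seminorms of $\log F'$ and $\log G'$ to tend to $0$, so that $F$ and $G$ converge to conformal maps close to the identity and $c_\alpha(g)\to 0$. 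Under the normalization $g\in\QS_*$ the map $g$ fixes three points and converges uniformly to $\id$, so the argument of Corollary \ref{equiv} gives $p_{1+\alpha}(g)\to 0$.

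The hard part will be exactly the control of the interior factor $G$: its regularity is governed by the dilatation of the \emph{reflected inverse} map on $\D^*$, and naively the Mori estimate (Proposition \ref{mori0}) only controls this with a power $1/K$, which would degrade the H\"older exponent and recover merely $\Diff^{1+\alpha'}$ for some $\alpha'<\alpha$ (this is precisely the $\alpha/2$ loss in Lemma \ref{sofar}). The role of Theorem \ref{mori}, giving $1-|f^\mu(z)|\asymp 1-|z|$ without the power of $K$, is what keeps the exponent sharp and makes the welding argument deliver the full $C^{1+\alpha}$.
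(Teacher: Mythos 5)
Your proposal is correct and follows essentially the same route as the paper: the conformal welding $g=(G|_{\S1})^{-1}\circ(F|_{\S1})$ with $F=f_\mu|_{\D^*}$ and $G=f_\mu\circ(f^\mu)^{-1}|_{\D}$, with the exterior factor controlled by Theorem \ref{basic1} and Proposition \ref{distortion}(1), the interior factor by Theorem \ref{mori} (to keep the reflected inverse dilatation in the class $O((|z|-1)^\alpha)$, exactly as in Corollary \ref{inverse}) together with Corollary \ref{interior} and Proposition \ref{distortion}(2), and the quantitative addendum obtained by tracking the constants and invoking Lemma \ref{sofar} and Corollary \ref{equiv}. The only difference is presentational: the paper estimates the modulus of continuity of $\widetilde g'$ directly from $g'=(f_\mu)'/(f'\circ g)$ rather than first asserting $C^{1+\alpha}$-regularity of each welding factor up to the boundary, but the underlying estimates are identical.
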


\begin{proof}
We may assume that the normalized quasiconformal self-homeomorphism $f^\mu$ of $\D$
with the complex dilatation $\mu$ extends to $g \in \QS_*$.
We represent this $g$ by conformal welding.
The quasiconformal homeomorphism of $\Chat$
extended by the reflection of $f^\mu$ with respect to $\S1$ is also denoted by $f^\mu$.
Let $f_\mu$ be the normalized quasiconformal self-homeomorphism of $\Chat$
whose complex dilatation is $\mu$ on $\D$ and $0$ on $\D^*$, which satisfies 
$f_\mu(\infty)=\infty$ and
$\lim_{z \to \infty}f'_\mu(z)=1$.
We define the quasiconformal self-homeomorphism $f_\mu \circ (f^\mu)^{-1}$ of $\Chat$ by $f$,
which is conformal on $\D$ with $f(\D)=f_\mu(\D)$
and whose complex dilatation on $\D^*$ is $(\mu^*)^{-1}$, the inverse of the reflection of $\mu$.
Then, $g=f^{-1} \circ f_\mu$ on $\S1$.
We note that $(\mu^*)^{-1}=(\mu^{-1})^*$, where $\mu^{-1}$ belongs to $\Bel_0^\alpha(\D)$ 
and $\Vert \mu^{-1} \Vert_{\infty,\alpha}$ can be estimated in terms of $\Vert \mu \Vert_{\infty,\alpha}$ by Corollary \ref{inverse}.

We will estimate the modulus of continuity of 
the derivative of $g:\S1 \to \S1$ at $e^{2\pi ix} \in \S1$
in terms of $\beta_\mu$ and $\bar \beta_{(\mu^*)^{-1}}$. 
This is based on an argument given by
Anderson, Becker, and Lesley \cite{ABL}.
By Theorem \ref{basic1}, we see that $\beta_\mu(t) \leq L t^\alpha$ for some constant $L \geq 0$ 
tending to $0$ uniformly as
$\Vert \mu \Vert_{\infty,\alpha} \to 0$. By Corollary \ref{interior}, we also have that
$\bar \beta_{(\mu^*)^{-1}}(t) \leq L' t^\alpha$ for some constant $L' \geq 0$ with the same property as $L$;
if $\Vert \mu \Vert_{\infty,\alpha} \to 0$, then $\Vert \mu^{-1} \Vert_{\infty,\alpha} \to 0$, and hence
$L' \to 0$ uniformly.

Now we consider
the derivative of the lift $\widetilde g: \R \to \R$ at $x \in \R$ represented by
$$
\widetilde g'(x)=\lim_{s \to 0} \left| \frac{g(e^{2\pi i(x+s)})-g(e^{2\pi ix})}{e^{2\pi i(x+s)}-e^{2\pi ix}} \right|
=|g'(e^{2\pi ix})|,
$$
where $g'(e^{2\pi ix})$ is the directional derivative along the tangent of $\S1$ at $e^{2\pi ix}$.
We see that $g$ is continuously differentiable and
$$
g'(e^{2\pi ix})=(f_\mu)'(e^{2\pi ix})/f'(g(e^{2\pi ix})).
$$
Indeed, as in the proof of Proposition \ref{distortion},
if $\Vert \mu \Vert_{\infty,\alpha}<\infty$,
then $(f_\mu)'(z)$ $(z \in \D^*)$ has a non-vanishing
continuous extension to $\S1=\partial \D^*$.
This is also true for $f'(z)$ $(z \in \D)$.
As $g$ is normalized, Lemma \ref{sofar} 
asserts that $\widetilde g'(x) \leq D$ for a 
constant $D \geq 1$ uniformly bounded when $\Vert \mu \Vert_{\infty,\alpha} \to 0$.

The modulus of continuity of $\widetilde g'$ is defined by
$$
I(t;\widetilde g')=\sup_{|x-y| \leq t} |\widetilde g'(x)-\widetilde g'(y)|
$$
for every $t \in (0,1/2]$. We note that 
$$
c_\alpha(g)=\sup_{0<t \leq 1/2} \frac{I(t;\widetilde g')}{t^\alpha}.
$$
According to the mean value theorem, 
$|\widetilde g(x)-\widetilde g(y)| \leq D|x-y|$, 
and if $\widetilde g'(x) \geq \widetilde g'(y)>0$, then
$$
|\widetilde g'(x)-\widetilde g'(y)| \leq D \left|1-\frac{\widetilde g'(y)}{\widetilde g'(x)}\right| \leq D \left|\log \frac{\widetilde g'(y)}{\widetilde g'(x)}\right|.
$$
This yields $I(t;\widetilde g') \leq D I(t; \log \widetilde g')$. The case where $\widetilde g'(y) \geq \widetilde g'(x)>0$ deduces
the same estimate. Moreover,
\begin{align*}
I(t;\log \widetilde g')
&\leq  I(t;\log |f_\mu'(e^{2\pi i \, \bullet})|)+I(t;\log |f'(g(e^{2\pi i \, \bullet}))|)\\
&\leq I(t;\log |f_\mu'(e^{2\pi i \, \bullet})|)+I(Dt;\log |f'(e^{2\pi i \, \bullet})|).
\end{align*}

Here, we note that
$(\log f_\mu')'(z)=T_{f_\mu|_{\D^*}}(z)$ 
and
$(\log f')'(z)=T_{f|_{\D}}(z)$.
Taking a path of integration
including the circular arc $\gamma$ joining $e^{2\pi ix}(1+t)$ and $e^{2\pi iy}(1+t)$ in $\D^*$
for $e^{2\pi ix}, e^{2\pi iy} \in \S1$ with $|x-y| \leq t$, we obtain 
\begin{align*}
&\quad|\log |f_\mu'(e^{2\pi ix})|-\log |f_\mu'(e^{2\pi iy})|| 
\leq |\log f_\mu'(e^{2\pi ix})-\log f_\mu'(e^{2\pi iy})| \\
&\leq
\int_{e^{2\pi ix}}^{e^{2\pi ix}(1+t)} |T_{f_\mu|_{\D^*}}(z)||dz|
+\int_\gamma |T_{f_\mu|_{\D^*}}(z)||dz|
+\int_{e^{2\pi iy}(1+t)}^{e^{2\pi iy}} |T_{f_\mu|_{\D^*}}(z)||dz|\\
&\leq
2\int_0^{t} \frac{\beta_\mu(t)}{t} dt
+2\pi(1+t)\beta_\mu(t).
\end{align*}
This implies that, if $\beta_\mu(t) \leq L t^\alpha$, then 
$$
I(t;\log |f_\mu'(e^{2\pi i \, \bullet})|) \leq (2/\alpha+3\pi)L t^\alpha
$$
for every $t \in (0,1/2]$.
The same holds for $f'$; thus, 
$$
I(Dt;\log |f'(e^{2\pi i \, \bullet})|) \leq (2/\alpha+3\pi)L' D^\alpha t^\alpha.
$$
Hence, $I(t;\widetilde g')=O(t^\alpha)$, which means that $g$ belongs to $\Diff_+^{1+\alpha}(\S1)$
by definition. 

Under the normalization $g \in \QS_*$, we have seen that $D$ is uniformly bounded
as $\Vert \mu \Vert_{\infty,\alpha} \to 0$.
Because $L,\, L' \to 0$ as $\Vert \mu \Vert_{\infty,\alpha} \to 0$, 
this shows that $I(t;\widetilde g')/t^\alpha$ tends to $0$ uniformly, which means that $c_\alpha(g) \to 0$.
Then, by Corollary \ref{equiv}, this implies that $p_{1+\alpha}(g) \to 0$.
Thus, $p_{1+\alpha}(g) \to 0$ as
$\Vert \mu \Vert_{\infty,\alpha} \to 0$. All these sequences converge quantitatively.
\end{proof}

Condition (2) of Theorem \ref{main} says that there exists some Beltrami coefficient $\mu \in \Bel^{\alpha}_0(\D)$
whose Teich\-m\"ul\-ler projection $\pi(\mu)$ coincides with $[g]$ for a given $g \in \Diff_+^{1+\alpha}(\S1)$.
Alternatively, this means that $g \in \Diff_+^{1+\alpha}(\S1)$ has some quasiconformal extension to $\D$ whose
complex dilatation belongs to $\Bel^{\alpha}_0(\D)$. We will show here that
the Douady--Earle extension actually gives such an extension
provided that Theorem \ref{main} is known.  

\begin{theorem}\label{alphasection} 
For every $g \in \Diff_+^{1+\alpha}(\S1)$, 
the image $s_{\rm DE}([g])$ under the conformally natural section
belongs to $\Bel_0^\alpha(\D)$.
\end{theorem}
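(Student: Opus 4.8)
The plan is to upgrade the qualitative inclusion $e_{\rm DE}(\Sym)\subset\AC(\D)$ of Earle--Markovic--Saric \cite{EMS} to the precise decay rate $\alpha$ by combining the conformal naturality of the Douady--Earle extension with a self-similar renormalization near the boundary. Since $s_{\rm DE}([g])$ depends only on the class $[g]$, and $\Diff^{1+\alpha}(\S1)$ is preserved under post-composition by $\Mob(\S1)$, I may assume $g\in\QS_*$ is normalized; write $\mu=s_{\rm DE}([g])$ for the complex dilatation of $e_{\rm DE}(g)$. The goal is to bound $|\mu(z)|$ by a constant times $(1-|z|)^\alpha$ for every $z\in\D$.

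First I would fix $z\in\D$, set $t=1-|z|$ and $\zeta_0=z/|z|$, and choose $\phi\in\Mob(\D)$ with $\phi(0)=z$. Because $e_{\rm DE}(\phi)=\phi$, the conformal naturality formula gives $e_{\rm DE}(g\circ\phi)=e_{\rm DE}(g)\circ\phi$, and since pre-composition by a conformal map leaves the modulus of the complex dilatation unchanged at the preimage point, one gets $|\mu(z)|=|\mu_{e_{\rm DE}(g\circ\phi)}(0)|$. This reduces the entire problem to estimating the Douady--Earle complex dilatation at the single interior point $0$ for the renormalized boundary maps $G_z:=g\circ\phi$.

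Next I would show that $G_z$ deviates from $\Mob(\S1)$ by $O(t^\alpha)$, uniformly in $z$. The quasisymmetry constant $M$ is invariant under pre-composition by $\Mob(\S1)$, because each quasisymmetric quotient is a value of the distortion function on a conformal modulus of a quadrilateral, and such moduli are Möbius invariant; hence $M(G_z)=M(g)$ and the family $\{G_z\}$ stays in one fixed $K$-quasisymmetric class. The same Möbius invariance shows that a quasisymmetric quotient of $\widetilde G_z$ at a configuration of unit scale near $0$ equals a quasisymmetric quotient of $\widetilde g$ at the $\phi$-image configuration, which sits at Euclidean scale comparable to $t$ about $\zeta_0$; at smaller scales $s$ the corresponding $g$-scale is comparable to $st\le t$. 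By Proposition \ref{diff-qs} these are all of the form $1+O(t^\alpha)$, so $b_\alpha(G_z)=O(t^\alpha)$ with a constant depending only on $\alpha$ and $c_\alpha(g)$. As the quasisymmetric quotients of a circle homeomorphism are identically $1$ precisely when it is Möbius, this is exactly the assertion that $G_z$ lies within $O(t^\alpha)$ of $\Mob(\S1)$.

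The final and decisive step is a \emph{quantitative} regularity of the barycentric extension at its center: for $G$ ranging over the fixed $K$-quasisymmetric class, $|\mu_{e_{\rm DE}(G)}(0)|\le C\,b_\alpha(G)$ with $C=C(K,\alpha)$. Granting this, the three steps combine to give $|\mu(z)|=|\mu_{e_{\rm DE}(G_z)}(0)|\le C\,b_\alpha(G_z)\le C'(1-|z|)^\alpha$ for all $z$, i.e. $\Vert\mu\Vert_{\infty,\alpha}<\infty$ and $s_{\rm DE}([g])\in\Bel_0^\alpha(\D)$. I expect this last estimate to be the main obstacle: the bare Earle--Markovic--Saric result only yields $\mu_{e_{\rm DE}(G)}(0)\to0$ as $G\to\Mob(\S1)$, whereas recovering the exponent $\alpha$ forces the modulus of continuity of $G\mapsto\mu_{e_{\rm DE}(G)}(0)$ at $\Mob(\S1)$ to be at worst linear in the gauge $b_\alpha$, uniformly over the whole quasisymmetric class containing the $G_z$. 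Establishing this uniform Lipschitz-type bound---either by differentiating the barycentric defining equation along the three-dimensional manifold $\Mob(\S1)$ and controlling the remainder on the class, or by a direct estimate of the Douady--Earle integral using the quasisymmetric-quotient bound of the third step---is the technical heart of the argument.
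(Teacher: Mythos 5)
Your argument has a genuine gap at the step you yourself flag as ``the technical heart'': the uniform linear bound $|\mu_{e_{\rm DE}(G)}(0)|\leq C\,b_\alpha(G)$ over a fixed quasisymmetric class. This inequality is not a known black box you can invoke; it is essentially equivalent to the theorem itself (after your renormalization), and neither of the two strategies you sketch for it (differentiating the barycentric equation along $\Mob(\S1)$, or estimating the Douady--Earle integral directly) is carried out. What is available in the literature, and what the paper actually uses, is Cui's estimate (Lemma \ref{cui}): for $\widetilde\mu=(\sigma(\mu^{-1}))^{-1}$ one has $|\widetilde\mu(w)|^2\leq C(1-|w|^2)^2\int_{\D}|\mu(z)|^2|1-\bar w z|^{-4}\,dxdy$, which controls the Douady--Earle dilatation pointwise by an \emph{integral} of the square of \emph{any other} Beltrami representative of the same class --- not by a modulus of proximity of the boundary map to $\Mob(\S1)$. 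The paper's proof is then short: take $\nu\in\Bel_0^\alpha(\D)$ with $\pi(\nu)=[g]$ (Theorem \ref{main}, already established via the Beurling--Ahlfors extension), pass to $\nu^{-1}\in\Bel_0^\alpha(\D)$ by Corollary \ref{inverse}, bound the integral by $C_2(1-|w|^2)^{2\alpha-2}$ (Lemma \ref{GammaBeta}), and apply Corollary \ref{inverse} once more. Your route deliberately avoids using another representative, but that is exactly why it must reprove a hard quantitative property of the barycentric extension from scratch; without that estimate the proof does not close.

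There is also a secondary imprecision in your renormalization step. The quasisymmetric quotient $m_h(x,s)$ is built from the symmetric triple $x-s,x,x+s$, and pre-composition by $\phi\in\Mob(\S1)$ does not carry symmetric triples to symmetric triples; the M\"obius invariance you invoke is of cross-ratios (or of the conformal modulus of the quadrilateral with a fourth marked point), so the identity ``a quotient of $\widetilde G_z$ at unit scale equals a quotient of $\widetilde g$ at scale $\sim t$'' needs to be replaced by a comparison of cross-ratio distortions, with the fourth point tracked. This is repairable (the paper does a version of such bookkeeping in the proof of Theorem \ref{ac-sym}), but as written the claim $b_\alpha(G_z)=O(t^\alpha)$ is not justified. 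The first gap, however, is the decisive one.
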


Let $\sigma: \Bel(\D) \to \Bel(\D)$ be defined by the correspondence of
$\mu$ to $s_{\rm DE}(\pi(\mu))$ for the conformally natural section $s_{\rm DE}$. 
We call this the {\it conformally natural projection} on $\Bel(\D)$.
A crucial property of this projection is the following,
which was proved by Theorem 1 in Cui \cite{Cui}.

\begin{lemma}\label{cui}
Let $\widetilde \mu=(\sigma(\mu^{-1}))^{-1}$ for any $\mu \in \Bel(\D)$. Then,
$$
|\widetilde \mu(w)|^2 \leq 
C_1(1-|w|^2)^2 \int_{\D} \frac{|\mu(z)|^2}{|1-\bar w z|^4} dxdy
$$
for every $w \in \D$, where $C_1=C_1(k)>0$ is a constant depending only on $k$ with $\Vert \mu \Vert_\infty \leq k$.
\end{lemma}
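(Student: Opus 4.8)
The plan is to reduce the pointwise bound to the complex dilatation of a single Douady--Earle extension at the origin, to write that dilatation explicitly by differentiating the barycentric equation, and then to convert the resulting boundary integrals into an area integral of $\mu$ against the square of the Poisson kernel. First I would unwind the definition: since $\widetilde\mu=(\sigma(\mu^{-1}))^{-1}$ is the complex dilatation of the inverse of $E_0=e_{\rm DE}(g^{-1})$ with $g=q(f^\mu)$, the number $|\widetilde\mu(w)|$ equals $|\mu_{E_0}(z_0)|$ with $z_0=E_0^{-1}(w)$. The double inversion is exactly what arranges that, after the conversion is carried out through an extension built from $f^\mu$, the original coefficient $\mu$ reappears on the right (the passage between $\mu$ and $\mu^{-1}$ being harmless here since $\Vert\mu^{-1}\Vert_\infty=\Vert\mu\Vert_\infty=k$). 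Using the conformal naturality $e_{\rm DE}(\phi_1\circ\cdot\circ\phi_2)=\phi_1\circ e_{\rm DE}(\cdot)\circ\phi_2$, I would pre- and post-compose with elements of $\Mob(\D)$ carrying $z_0$ and $w$ to $0$, reducing to the estimate of $|\mu_E(0)|$ for the Douady--Earle extension $E$ of a boundary homeomorphism $G$ normalized to have conformal barycenter at the origin.

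Next I would make $\mu_E(0)$ explicit. Writing the defining relation
$$
N(u,v)=\int_{\S1}\frac{G(\zeta)-v}{1-\bar v\,G(\zeta)}\,\frac{1-|u|^2}{|\zeta-u|^2}\,\frac{|d\zeta|}{2\pi}=0,\qquad E(u)=v,
$$
and differentiating $N(u,E(u))\equiv0$ at $u=0$, $E(0)=0$, one solves the resulting linear system for $\partial E(0)$ and $\bar\partial E(0)$ and obtains
$$
\mu_E(0)=\frac{P_2+Q\,\overline{P_1}}{P_1+Q\,\overline{P_2}},\qquad
P_1=\int_{\S1}G\,\bar\zeta\,\tfrac{|d\zeta|}{2\pi},\quad
P_2=\int_{\S1}G\,\zeta\,\tfrac{|d\zeta|}{2\pi},\quad
Q=\int_{\S1}G^2\,\tfrac{|d\zeta|}{2\pi}.
$$
The denominator is $\partial E(0)$, nonzero because $E$ is a diffeomorphism; since the normalized boundary maps $G$ form a normal, compact family once the quasisymmetry constant is bounded in terms of $k$, I would bound $|P_1+Q\overline{P_2}|$ below and $1-|Q|^2$ away from $0$ by a constant depending only on $k$. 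As $|\overline{P_1}|\le1$, the estimate thus reduces to controlling the two numerator integrals $P_2$ and $Q$, both of which vanish when $\mu=0$.

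The heart of the matter is the conversion of these boundary integrals into area integrals of $\mu$. For $P_2$ I would use $\tfrac{|d\zeta|}{2\pi}=\tfrac{d\zeta}{2\pi i\zeta}$ to write $P_2=\tfrac{1}{2\pi i}\int_{\S1}\hat f\,d\zeta$ for any quasiconformal extension $\hat f$ of $G$, and choose $\hat f=A\circ\Phi\circ S$ with $A,S\in\Mob(\D)$ the normalizing maps above and $\Phi$ the $\D$-automorphism built from $f^\mu$ whose boundary values are those of $G$. Stokes' theorem gives $P_2=\tfrac{1}{\pi}\iint_\D\mu_{\hat f}\,\partial\hat f\,dA$, finite because $\partial\hat f\in L^2(\D)$ (the integral of $|\partial\hat f|^2-|\bar\partial\hat f|^2$ is the finite area of $\hat f(\D)$). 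Changing variables by $S$ produces the Poisson factor $|1/S'|=(1-|w|^2)/|1-\bar w z|^2$, and a Cauchy--Schwarz split keeping one Poisson factor with $|\mu|$ yields
$$
|P_2|\le\frac{1}{\pi}\Big(\iint_\D|\mu|^2\,\frac{(1-|w|^2)^2}{|1-\bar w z|^4}\,dA\Big)^{1/2}
\Big(\iint_\D|A'\circ\Phi|^2\,|\partial\Phi|^2\,dA\Big)^{1/2}.
$$
The second factor is at most $(\pi/(1-k^2))^{1/2}$: substituting $\omega=\Phi(z)$ and using that the Jacobian dominates $(1-k^2)|\partial\Phi|^2$ turns it into $\tfrac{1}{1-k^2}\iint_\D|A'|^2\,dA=\pi/(1-k^2)$, since $A$ is a M\"obius automorphism of $\D$. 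This gives the desired bound for the $P_2$-contribution with a constant depending only on $k$.

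I expect the $Q$-term to be the main obstacle. Its conversion carries an extra factor $1/\zeta$, so Stokes on the punctured disk produces a residue equal to the square of the value of the chosen extension at the base point; for the $\mu$-tied extension this value vanishes when $\mu=0$ but is not literally an integral of $\mu$. I would dispose of it in one of two ways: either select an extension of $G$ that already vanishes at the origin while keeping its dilatation controlled by $k$, so that the residue disappears and the remaining integral is estimated exactly as for $P_2$ (the locally integrable singularity $1/|\zeta|$ being harmless against $\partial\hat f\in L^2$); or keep the residue and bound it by comparing at the base point the two extensions of the same boundary map, a discrepancy again reducible to the same $L^2$ integral of $\mu$ via the integral representation of the difference of two quasiconformal solutions with identical boundary values. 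Combining the bounds on $P_2$ and $Q$ with the lower bound on the denominator and squaring gives
$$
|\widetilde\mu(w)|^2\le C\,(1-|w|^2)^2\int_\D\frac{|\mu(z)|^2}{|1-\bar w z|^4}\,dx\,dy
$$
with $C=C(k)$ depending only on $k$, as asserted.
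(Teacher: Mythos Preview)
The paper does not supply its own proof of this lemma: it is quoted verbatim as a result of Cui and attributed to \cite[Theorem~1]{Cui}. So there is no argument in the paper to compare against; your proposal is being measured against Cui's original proof.

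Your outline is, in broad strokes, Cui's argument. The reduction via conformal naturality to $|\mu_E(0)|$ for a normalized Douady--Earle extension, the explicit formula
\[
\mu_E(0)=\frac{P_2+Q\,\overline{P_1}}{P_1+Q\,\overline{P_2}}
\]
obtained by differentiating the barycentric relation, the lower bound on the denominator by compactness of normalized $k$-quasisymmetric maps, and the Stokes conversion of $P_2$ into an area integral followed by Cauchy--Schwarz with one Poisson factor kept against $|\mu|$ --- all of this matches Cui's approach and is correct as written. The estimate of the second Cauchy--Schwarz factor by $\pi/(1-k^2)$ via the Jacobian substitution is also right.

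Where your proposal is thin is exactly where you flag it: the $Q$-term. You offer two alternatives but execute neither. The first option (choose an extension vanishing at the origin) is the cleaner one and is essentially what Cui does; but you should check that such an extension can be arranged with dilatation still pointwise comparable to the pulled-back $\mu$, not merely with $L^\infty$ norm bounded by $k$, since the Cauchy--Schwarz step needs $|\mu_{\hat f}(z)|$ controlled by $|\mu(S^{-1}(z))|$ pointwise to produce the Poisson-weighted integral of $|\mu|^2$ rather than of $k^2$. The second option (bound the residue by comparing two extensions) would require an independent argument that the difference at the base point is itself dominated by the same weighted $L^2$ integral of $\mu$, which is not obviously lighter than the original problem. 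So the strategy is sound, but the $Q$-term is not yet closed; to finish you should carry out the first alternative explicitly, verifying that the modification at the origin does not disturb the pointwise link between $\mu_{\hat f}$ and $\mu$.
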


We also need the following claim, which can be found in Lemma 3.10 of Zhu \cite{Z}.

\begin{lemma}\label{GammaBeta}
If $\mu \in \Bel^\alpha_0(\D)$ $(\alpha \in (0,1))$, then
$$
\int_{\D} \frac{|\mu(z)|^2}{|1-\bar w z|^4} dxdy
\leq C_2(1-|w|^2)^{2\alpha-2}
$$
for every $w \in \D$, where $C_2=C_2(\widetilde k)>0$ is a constant depending only on $\widetilde k$ 
with $\Vert \mu \Vert_{\infty,\alpha} \leq \widetilde k$.
\end{lemma}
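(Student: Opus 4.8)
The plan is to throw away all information about $\mu$ beyond its pointwise size and then evaluate the resulting pure weight integral by a power-series expansion; this is a Forelli--Rudin type estimate, which I would prove in a self-contained way. First I would record the pointwise consequence of the hypothesis. Since $\rho_{\D}(\zeta)=2/(1-|\zeta|^2)$, the bound $\rho_{\D}^{\alpha}(\zeta)|\mu(\zeta)| \leq \widetilde k$ (valid almost everywhere as $\Vert \mu \Vert_{\infty,\alpha} \leq \widetilde k$) gives $|\mu(z)|^2 \leq 4^{-\alpha}\widetilde k^2 (1-|z|^2)^{2\alpha}$. Substituting this into the left-hand side reduces the claim to the weight estimate
$$
J(w):=\int_{\D} \frac{(1-|z|^2)^{2\alpha}}{|1-\bar w z|^4}\,dxdy \leq C(\alpha)\,(1-|w|^2)^{2\alpha-2},
$$
with the entire $\mu$-dependence now concentrated in the prefactor $4^{-\alpha}\widetilde k^2$. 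Thus the final constant is $C_2=4^{-\alpha}\widetilde k^2\,C(\alpha)$, which depends only on $\widetilde k$ with $\alpha$ fixed, as required.

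To evaluate $J(w)$ I would expand the kernel. Writing $|1-\bar w z|^{-4}=(1-\bar w z)^{-2}(1-w\bar z)^{-2}$ and using the binomial series $(1-\bar w z)^{-2}=\sum_{n\geq0}(n+1)\bar w^n z^n$ together with its conjugate, integration in polar coordinates annihilates every off-diagonal term by orthogonality of $\{z^n\}$. The surviving diagonal integral $\int_{\D} |z|^{2n}(1-|z|^2)^{2\alpha}\,dxdy$ is a Beta integral equal to $\pi\,\Gamma(n+1)\Gamma(2\alpha+1)/\Gamma(n+2\alpha+2)$, whence
$$
J(w)=\pi\,\Gamma(2\alpha+1)\sum_{n\geq0}(n+1)^2\frac{\Gamma(n+1)}{\Gamma(n+2\alpha+2)}\,|w|^{2n}.
$$

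The last step is to read off the boundary behaviour of this series. By the ratio asymptotics $\Gamma(n+1)/\Gamma(n+2\alpha+2)\sim n^{-(2\alpha+1)}$, the $n$th coefficient is comparable to $n^{1-2\alpha}$. Since $0<\alpha<1$ we have $1-2\alpha>-1$, so the Abelian estimate $\sum_{n} n^{\beta}x^n \asymp (1-x)^{-(\beta+1)}$ (for $\beta>-1$, as $x\to 1^-$) applies with $\beta=1-2\alpha$ and $x=|w|^2$, giving $J(w)\leq C(\alpha)(1-|w|^2)^{-(2-2\alpha)}=C(\alpha)(1-|w|^2)^{2\alpha-2}$ for $|w|$ near $1$; for $|w|$ bounded away from $1$ the integral is trivially bounded, so the bound holds on all of $\D$ after enlarging $C(\alpha)$.

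I expect the only genuine work to be the two asymptotic ingredients of the final paragraph: controlling the Gamma-ratio uniformly in $n$ and upgrading the coefficient asymptotics to a power-series bound with an explicit constant rather than a bare $\asymp$. The structural heart of the lemma is that $\alpha<1$ keeps the exponent $\beta=1-2\alpha$ strictly above $-1$ (equivalently keeps the kernel power $4$ strictly above the critical value $2\alpha+2$), so that $J(w)$ genuinely diverges like a negative power of $1-|w|^2$ rather than only logarithmically; I would flag this threshold explicitly. Should the explicit constants prove awkward, an alternative to the series computation is a dyadic/Whitney decomposition of $\D$ into the Carleson box near $w/|w|$ and its complement, bounding $|1-\bar w z|$ below by $\max\{1-|w|,|\arg(\bar w z)|\}$ on each piece; this is more elementary and yields the same exponent.
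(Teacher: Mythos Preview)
Your proof is correct. The paper does not supply its own argument for this lemma but simply cites Zhu \cite[Lemma 3.10]{Z}; your computation is precisely the standard Forelli--Rudin estimate that Zhu's lemma records, so you have recovered the cited result in a self-contained way. The reduction $|\mu(z)|^2 \le 4^{-\alpha}\widetilde k^{2}(1-|z|^2)^{2\alpha}$ followed by the power-series/Beta-integral evaluation of $J(w)$ and the Gamma-ratio asymptotics is exactly the route taken in that reference, and your observation that $\alpha<1$ keeps the coefficient exponent $1-2\alpha$ above $-1$ is the correct threshold check.
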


\medskip
\noindent
{\it Proof of Theorem \ref{alphasection}.}
For $g \in \Diff_+^{1+\alpha}(\S1)$, we choose $\nu \in \Bel^{\alpha}_0(\D)$ such that $\pi(\nu)=[g]$ by Theorem \ref{main}.
Then, $\nu^{-1}$ also belongs to $\Bel^{\alpha}_0(\D)$ by Corollary \ref{inverse}.
For $\mu=\nu^{-1}$, we apply Lemmata \ref{cui} and \ref{GammaBeta} to show that 
$\tilde \mu=(\sigma(\mu^{-1}))^{-1}$ belongs to $\Bel^{\alpha}_0(\D)$. Again by Corollary \ref{inverse},
this shows that $\sigma(\nu) =\sigma(\mu^{-1}) \in \Bel^{\alpha}_0(\D)$. As
$\sigma(\nu)=s_{\rm DE}(\pi(\nu))=s_{\rm DE}([g])$, we have the assertion.
\qed
\medskip

We can also show that the restriction of the conformally natural projection 
$\sigma$ to $\Bel^{\alpha}_0(\D)$ is continuous with respect to the topology
induced by the norm $\Vert \cdot \Vert_{\infty,\alpha}$. 
The detailed proof has been given in \cite{Mat4}.
To see this, we use the relation between the norm $\Vert \cdot \Vert_{\infty,\alpha}$ and
the right uniform topology on $\Diff_+^{1+\alpha}(\S1)$, which will be shown in Theorem \ref{topology}
in the next section.

\section{The Teich\-m\"ul\-ler space of circle diffeomorphisms}\label{8}

We are ready to realize the Teich\-m\"ul\-ler space of circle diffeomorphisms with H\"older continuous derivatives
as a subspace of the universal Teich\-m\"ul\-ler space.
Then, we will give an application of the structure of this space at the end of this section.

\begin{definition}
For a constant $\alpha$ with $0<\alpha <1$,
the Teich\-m\"ul\-ler space of circle diffeomorphisms with $\alpha$-H\"older continuous derivatives is defined by
$$
T_0^\alpha=\Mob(\S1) \backslash \Diff_+^{1+\alpha}(\S1).
$$
\end{definition}

Theorem \ref{main} implies that
the Teich\-m\"ul\-ler projection $\pi:\Bel(\D) \to T$ gives
$$
\pi(\Bel_0^\alpha(\D))=T_0^\alpha,
$$
and the Bers embedding $\beta:T \to B(\D^*)$ gives
$$
\beta(T_0^\alpha)=\beta(T) \cap B_0^\alpha(\D^*),
$$
which coincides with $\Phi(\Bel_0^\alpha(\D))$ for the Bers projection $\Phi:\Bel(\D) \to B(\D^*)$.
Here, we see that $\beta(T) \cap B_0^\alpha(\D^*)$ is an open subset of the Banach space $B_0^\alpha(\D^*)$.
Indeed, this follows from the fact that $\beta(T)$ is open in $B(\D^*)$, and 
the norm inequality $\Vert \varphi \Vert_\infty \leq \Vert \varphi \Vert_{\infty,\alpha}$ for $\varphi \in B_0^\alpha(\D^*)$.

We restrict $\pi$, $\Phi$, and $\beta$ to the spaces as above and consider the
continuity and openness of these maps. We provide $T_0^\alpha$ with the quotient topology from $\Bel_0^\alpha(\D)$ by $\pi$,
which is so defined that $\pi$ is continuous. Then, from the facts listed in the proof below, 
we are able to prove the following:

\begin{theorem}\label{complexst}
The Bers embedding $\beta:T_0^\alpha \to B_0^\alpha(\D^*)$ is a homeomorphism
onto the image $\beta(T) \cap B_0^\alpha(\D^*)$.
Hence, $T_0^\alpha$ is equipped with
the complex structure modeled on the complex Banach space $B_0^\alpha(\D^*)$.
\end{theorem}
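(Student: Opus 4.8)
The plan is to reproduce, in the refined $\alpha$-setting, the argument used for the universal Teich\-m\"ul\-ler space at the end of Section \ref{2}: once the restricted maps $\pi:\Bel_0^\alpha(\D) \to T_0^\alpha$ and $\Phi:\Bel_0^\alpha(\D) \to B_0^\alpha(\D^*)$ are known to be continuous and open, the relation $\beta \circ \pi = \Phi$ forces $\beta$ to be a homeomorphism onto its image. First I would record that $\beta$ is already a bijection onto $\beta(T) \cap B_0^\alpha(\D^*)$: injectivity is inherited from the universal Bers embedding, and surjectivity onto this set is exactly the equivalence $(1)\Leftrightarrow(3)$ of Theorem \ref{main} together with the image identification $\beta(T_0^\alpha)=\Phi(\Bel_0^\alpha(\D))$. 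Since $T_0^\alpha$ carries the quotient topology determined by $\pi$ and $\beta\circ\pi=\Phi$, continuity of $\beta$ is equivalent to continuity of $\Phi$; and for the inverse one uses the identity $\beta(U)=\Phi(\pi^{-1}(U))$ for open $U \subset T_0^\alpha$, so that openness of $\beta$ onto its image follows from openness of $\Phi$ onto its image together with continuity of $\pi$.

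Thus the proof reduces to assembling a short list of facts. Continuity of $\pi$ is immediate from the definition of the quotient topology. Openness of $\pi$ is the verbatim analogue of Proposition \ref{open-T}: for open $U\subset\Bel_0^\alpha(\D)$ one has $\pi^{-1}(\pi(U))=\bigcup_\nu r_\nu(U)$, the union taken over $\nu$ in the fiber of $[\id]$ inside $\Bel_0^\alpha(\D)$, and each right translation $r_\nu$ is a homeomorphism of $\Bel_0^\alpha(\D)$ in the $\Vert\cdot\Vert_{\infty,\alpha}$-topology. This last point is precisely what Proposition \ref{group} and Corollary \ref{inverse} supply: $\Bel_0^\alpha(\D)$ is a group under $\ast$, and the estimate
$$
\Vert \mu \ast \nu^{-1} \Vert_{\infty,\alpha} \leq \frac{(2A)^\alpha}{1-\Vert \mu \Vert_\infty \Vert \nu \Vert_\infty}\,\Vert \mu-\nu \Vert_{\infty,\alpha}
$$
shows that both $r_\nu$ and $r_{\nu^{-1}}=(r_\nu)^{-1}$ are $\Vert\cdot\Vert_{\infty,\alpha}$-continuous.

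The two remaining facts concern $\Phi$ and carry the genuine analytic weight. Continuity of $\Phi:\Bel_0^\alpha(\D) \to B_0^\alpha(\D^*)$ in the refined norm $\Vert\cdot\Vert_{\infty,\alpha}$ is the delicate point, since the target norm is strictly stronger than the universal sup-norm used in Proposition \ref{B-conti}; I would obtain it by first applying a right translation to reduce to a neighbourhood of the base point and then invoking the integral representation of the Schwarzian derivative $\Phi(\mu)$, with a careful constant-tracking estimate of that integral. Openness of $\Phi$ onto its image I would deduce from the submersion property, that is, from the existence of a local holomorphic section of $\Phi$ at each point of $\Phi(\Bel_0^\alpha(\D))$, exactly as openness followed from the submersion property in the universal case (Theorem \ref{hol-sub} and the discussion following it).

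Finally I would combine the pieces: with $\pi$ continuous, with $\Phi$ continuous and open onto its image, and with $\beta$ the induced bijection, $\beta$ is continuous by the quotient property and open onto $\beta(T)\cap B_0^\alpha(\D^*)$ via $\beta(U)=\Phi(\pi^{-1}(U))$, hence a homeomorphism. Since $\beta(T)\cap B_0^\alpha(\D^*)$ is an open subset of the complex Banach space $B_0^\alpha(\D^*)$, transporting its chart through $\beta$ equips $T_0^\alpha$ with the asserted complex structure. The main obstacle is unquestionably the continuity of $\Phi$ in the $\Vert\cdot\Vert_{\infty,\alpha}$-norm, and to a lesser extent the construction of the local holomorphic section; everything else is soft point-set topology or is already provided by Theorem \ref{main}, Proposition \ref{group}, and Corollary \ref{inverse}.
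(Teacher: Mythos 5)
Your proposal follows essentially the same route as the paper: it reduces the theorem to (i) openness of $\pi$ via the right translations $r_\nu$ being homeomorphisms of $\Bel_0^\alpha(\D)$ in the $\Vert\cdot\Vert_{\infty,\alpha}$-norm, (ii) continuity of $\Phi$ via a constant-tracking estimate of the integral representation of the Schwarzian derivative, and (iii) openness of $\Phi$ onto its image via a local section constructed by quasiconformal reflection, which is exactly the decomposition into Lemma \ref{open}, Lemma \ref{basic} and Lemma \ref{localsection} that the paper uses. The only cosmetic difference is that the paper first produces a \emph{continuous} local section and upgrades to holomorphy afterwards by a general argument, and that full continuity of $r_\nu$ requires the two-point Lipschitz estimate of Proposition \ref{r-translation} rather than only the single-point inequality from Proposition \ref{group}, but both follow from the same application of Theorem \ref{mori}.
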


\begin{proof}
For the proof of this theorem, it suffices to show the following claims:
\begin{enumerate}
\item
$\Phi:\Bel_0^\alpha(\D) \to B_0^\alpha(\D^*)$ is continuous;
\item
$\Phi:\Bel_0^\alpha(\D) \to \Phi(\Bel_0^\alpha(\D))=\beta(T) \cap B_0^\alpha(\D^*)$ has a local continuous section.
\end{enumerate}
These claims are proved in Lemma \ref{basic} and Lemma \ref{localsection} below,
respectively.
\end{proof}

We begin by showing a basic fact of the group $\Bel_0^\alpha(\D)$.
This is analogous to Proposition 5.1 in Yanagishita \cite{Yan}.

\begin{proposition}\label{r-translation}
The right translation $r_\nu:\Bel_0^\alpha(\D) \to \Bel_0^\alpha(\D)$ for any $\nu \in \Bel_0^\alpha(\D)$
defined by $\mu \mapsto \mu \ast \nu^{-1}$ is a homeomorphism 
with respect to $\Vert \cdot \Vert_{\infty,\alpha}$.
\end{proposition}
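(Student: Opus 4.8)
The plan is to realize $r_\nu$ as a bijection of $\Bel_0^\alpha(\D)$ whose set-theoretic inverse is again a right translation, and then to establish a locally Lipschitz estimate for $r_\nu$ in the norm $\Vert\cdot\Vert_{\infty,\alpha}$; applying the same estimate to the inverse then gives the homeomorphism property. First I would record that $r_\nu$ maps $\Bel_0^\alpha(\D)$ into itself, which is precisely the content of Proposition \ref{group}. Since $\ast$ is a group operation on $\Bel(\D)$ and $\nu^{-1}\in\Bel_0^\alpha(\D)$ by Corollary \ref{inverse}, associativity gives $r_{\nu^{-1}}\circ r_\nu=\id=r_\nu\circ r_{\nu^{-1}}$, so $r_\nu$ is a bijection of $\Bel_0^\alpha(\D)$ with inverse $r_{\nu^{-1}}$, itself a right translation by an element of $\Bel_0^\alpha(\D)$. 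Hence it suffices to prove that $r_\nu$ is continuous for every $\nu\in\Bel_0^\alpha(\D)$.

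The core is a pointwise estimate for the difference $r_\nu(\mu_1)-r_\nu(\mu_2)$. Writing $\zeta=f^{\nu}(z)$ in the composition formula, both $r_\nu(\mu_1)(\zeta)$ and $r_\nu(\mu_2)(\zeta)$ carry the same unimodular factor $\partial f^{\nu}(z)/\overline{\partial f^{\nu}(z)}$, which therefore factors out of their difference. The remaining bracket is a difference of two M\"obius-type expressions in $\mu_1(z),\mu_2(z)$ sharing the common parameter $\nu(z)$, and a direct computation collapses it to
$$
\frac{\mu_1(z)-\nu(z)}{1-\overline{\nu(z)}\mu_1(z)}-\frac{\mu_2(z)-\nu(z)}{1-\overline{\nu(z)}\mu_2(z)}
=\frac{(\mu_1(z)-\mu_2(z))(1-|\nu(z)|^2)}{(1-\overline{\nu(z)}\mu_1(z))(1-\overline{\nu(z)}\mu_2(z))}.
$$
Bounding $1-|\nu(z)|^2\le 1$ and $|1-\overline{\nu(z)}\mu_i(z)|\ge 1-\Vert\nu\Vert_\infty\Vert\mu_i\Vert_\infty$ controls $|r_\nu(\mu_1)(\zeta)-r_\nu(\mu_2)(\zeta)|$ by $|\mu_1(z)-\mu_2(z)|$ up to a constant.

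To pass from this pointwise bound to the $\Vert\cdot\Vert_{\infty,\alpha}$ norm I would multiply by $\rho_{\D}^\alpha(\zeta)$ and invoke Theorem \ref{mori} for $f^{\nu}$, which yields $\rho_{\D}^\alpha(\zeta)\le(2A)^\alpha\rho_{\D}^\alpha(z)$ for a constant $A=A(\alpha,K(f^{\nu}),\Vert\nu\Vert_{\infty,\alpha})$, exactly the comparison already used in the proof of Proposition \ref{group}. Taking the supremum over $z$ (equivalently over $\zeta$) gives
$$
\Vert r_\nu(\mu_1)-r_\nu(\mu_2)\Vert_{\infty,\alpha}
\le\frac{(2A)^\alpha}{(1-\Vert\nu\Vert_\infty\Vert\mu_1\Vert_\infty)(1-\Vert\nu\Vert_\infty\Vert\mu_2\Vert_\infty)}\,\Vert\mu_1-\mu_2\Vert_{\infty,\alpha}.
$$
Continuity follows: since $\Vert\cdot\Vert_\infty\le\Vert\cdot\Vert_{\infty,\alpha}$ (because $\rho_{\D}\ge 2$ on $\D$), convergence in $\Vert\cdot\Vert_{\infty,\alpha}$ keeps the sup-norms, and hence the denominator, bounded away from degeneracy in a neighborhood of any base point, so the right-hand side tends to $0$. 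Running the identical argument for $r_{\nu^{-1}}$ shows $r_\nu^{-1}$ is continuous as well, completing the proof.

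The only point requiring care is the uniformity of the constant $A$: it must depend solely on the fixed data $\alpha$, $K(f^{\nu})$ and $\Vert\nu\Vert_{\infty,\alpha}$, and not on the variable coefficients $\mu_1,\mu_2$. This is guaranteed by Theorem \ref{mori}, since the radial distortion of $f^{\nu}$ is governed by $\nu$ alone; the algebraic collapse of the bracketed difference is the computational heart but is entirely elementary.
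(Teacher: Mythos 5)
Your proposal is correct and follows essentially the same route as the paper: the same algebraic collapse of the difference $r_\nu(\mu_1)-r_\nu(\mu_2)$ to a multiple of $\mu_1-\mu_2$, the same appeal to Theorem \ref{mori} for the comparison $\rho_{\D}^{\alpha}(\zeta)\le(2A)^\alpha\rho_{\D}^{\alpha}(z)$ with $\zeta=f^\nu(z)$, and the same observation that $(r_\nu)^{-1}=r_{\nu^{-1}}$. The only (immaterial) difference is how you bound the denominator, using $1-\Vert\nu\Vert_\infty\Vert\mu_i\Vert_\infty$ where the paper uses $\sqrt{(1-|\mu_1|^2)(1-|\mu_2|^2)}$.
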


\begin{proof}
We have the following formula for $\zeta=f^\nu(z)$:
\begin{align*}
|r_\nu(\mu_1)(\zeta)-r_\nu(\mu_2)(\zeta)|
&= \left|\frac{\mu_1(z)-\nu(z)}{1-\overline{\nu(z)}\mu_1(z)}-\frac{\mu_2(z)-\nu(z)}{1-\overline{\nu(z)}\mu_2(z)}\right|\\
&= \frac{|\mu_1(z)-\mu_2(z)|(1-|\nu(z)|^2)}{|1-\overline{\nu(z)}\mu_1(z)||1-\overline{\nu(z)}\mu_2(z)|}.
\end{align*}
Here, the last term is bounded by
\begin{align*}
&\quad \ \frac{|\mu_1(z)-\mu_2(z)|(1-|\nu(z)|^2)}{\sqrt{(|1-\overline{\nu(z)}\mu_1(z)|^2-|\mu_1(z)-\nu(z)|^2)(|1-\overline{\nu(z)}\mu_2(z)|^2-|\mu_2(z)-\nu(z)|^2)}}\\
&= \frac{|\mu_1(z)-\mu_2(z)|}{\sqrt{(1-|\mu_1(z)|^2)(1-|\mu_2(z)|^2)}}. 
\end{align*} 
By applying Theorem \ref{mori} to $f^\nu$, we have 
$\rho_{\D}^{\alpha}(\zeta) \leq (2A)^\alpha \rho_{\D}^{\alpha}(z)$ for some $A \geq 1$. Hence,
$$
\Vert r_\nu(\mu_1)-r_\nu(\mu_2) \Vert_{\infty,\alpha} \leq \frac{(2A)^\alpha}
{\sqrt{(1-\Vert \mu_1 \Vert_\infty^2)(1-\Vert \mu_2 \Vert_\infty^2)}}
\Vert \mu_1-\mu_2 \Vert_{\infty,\alpha}.
$$
This shows that $r_\nu$ is continuous. As $(r_\nu)^{-1}=r_{\nu^{-1}}$ and $\nu^{-1} \in \Bel_0^\alpha(\D)$, 
we also see that the inverse $(r_\nu)^{-1}$ is continuous.
\end{proof}

We note that the right translation $r_\nu$ for $\nu \in \Bel_0^\alpha(\D)$ projects down to the base point change map
$R_{\pi(\nu)}:T^\alpha_0 \to T^\alpha_0$; then,  
as $r_\nu$ is a homeomorphism, so is $R_{\pi(\nu)}$.
Their holomorphy will be discussed later in Corollary \ref{basepointchange}.

The continuity of $\Phi:\Bel_0^\alpha(\D) \to B_0^\alpha(\D^*)$ can be proved as a special case of
the assertion that follows. In contrast to the original case (Proposition \ref{B-conti}),
we need to introduce here a certain representation of a Schwarzian derivative using Beltrami coefficients
and estimate it by the results which we have obtained.

\begin{lemma}\label{basic}
Let $\nu \in \Bel^{\alpha'}_0(\D)$ possibly with $\alpha \neq \alpha'\in (0,1)$.
Then, every $\mu \in \Bel(\D)$ satisfies
$$
\Vert \Phi(\mu) -\Phi(\nu)\Vert_{\infty, \alpha}
\leq C \Vert \mu-\nu \Vert_{\infty,\alpha},
$$
where $C=C(\nu,\alpha,k)>0$ is a constant depending only on 
$\nu$, $\alpha$, and $k$ with $\Vert \mu \Vert_\infty \leq k$.
The dependence on $\nu$ is further given by $\alpha'$, $\Vert \nu \Vert_\infty$, and $\Vert \nu \Vert_{\infty,\alpha'}$.
The term on the right side is assumed to be $\infty$ when $\mu-\nu \notin \Bel^\alpha_0(\D)$.
\end{lemma}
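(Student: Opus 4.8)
The plan is to reduce the difference $\Phi(\mu)-\Phi(\nu)$ of Schwarzian derivatives to the Schwarzian of a \emph{single} conformal map of a quasidisk by right translating at the fixed base point $\nu$, and then to estimate an Astala--Zinsmeister integral representation pointwise against the weight $\rho_{\D^*}^{-2+\alpha}$. Writing $f_\mu=(f_\mu\circ f_\nu^{-1})\circ f_\nu$ and applying the chain rule for Schwarzian derivatives on $\D^*$, I would first obtain
\[
\Phi(\mu)(z)-\Phi(\nu)(z)=S_F(f_\nu(z))\,(f_\nu'(z))^2,\qquad F=f_\mu\circ f_\nu^{-1},
\]
where $F$ is conformal on the quasidisk $Q^*=f_\nu(\D^*)$ and its quasiconformal extension has Beltrami coefficient $\lambda$ on $Q=f_\nu(\D)$ with $|\lambda(f_\nu(\zeta))|=|\mu(\zeta)-\nu(\zeta)|/|1-\overline{\nu(\zeta)}\mu(\zeta)|$ by the composition formula. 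If $\mu-\nu\notin\Bel_0^\alpha(\D)$ there is nothing to prove, so I may assume $\Vert\mu-\nu\Vert_{\infty,\alpha}<\infty$, and then $|\lambda(f_\nu(\zeta))|\le (1-\Vert\nu\Vert_\infty k)^{-1}\Vert\mu-\nu\Vert_{\infty,\alpha}(1-|\zeta|)^\alpha$. Translating by the \emph{fixed} $\nu$, rather than along the segment $[\nu,\mu]$, is the crucial choice: all the geometry, namely the quasidisk $Q$ and the distortion of $f_\nu$, then depends on $\nu$ alone, while the smallness is carried entirely by $\lambda$.

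Next I would transfer the weight from $\D^*$ to $Q^*$. By Proposition \ref{distortion}(1) applied to $f_\nu|_{\D^*}$, whose extension has dilatation $\nu\in\Bel_0^{\alpha'}(\D)$, one has $|f_\nu'(z)|\asymp 1$ on $\D^*$ with constants depending only on $\alpha'$ and $\Vert\nu\Vert_{\infty,\alpha'}$; by the Koebe theorem this gives $\operatorname{dist}(f_\nu(z),\partial Q^*)\asymp |z|-1$, hence $\rho_{\D^*}^{-2+\alpha}(z)|f_\nu'(z)|^2\asymp \delta^*(f_\nu(z))^{2-\alpha}$ with $\delta^*=\operatorname{dist}(\cdot,\partial Q^*)$. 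On the interior I would combine Theorem \ref{mori} ($1-|f^\nu(\zeta)|\asymp 1-|\zeta|$) with the interior distortion estimate of Corollary \ref{interior} and Proposition \ref{distortion}(2) to get $\delta(f_\nu(\zeta))\asymp 1-|\zeta|$, where $\delta=\operatorname{dist}(\cdot,\partial Q)$, again with constants governed by $\alpha'$, $\Vert\nu\Vert_\infty$ and $\Vert\nu\Vert_{\infty,\alpha'}$. The claim is thereby reduced to the uniform bound $\delta^*(w)^{2-\alpha}|S_F(w)|\le C\Vert\mu-\nu\Vert_{\infty,\alpha}$ for $w\in Q^*$.

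I would then invoke the Astala--Zinsmeister representation of $S_F$ on $Q^*$ as a singular integral of $\lambda$ over $Q$ with kernel comparable to $|\omega-w|^{-4}$ up to distortion factors which, after the change of variables $\omega=f_\nu(\zeta)$, are bounded using the estimates on $f_\mu$ and $f_\nu$. This yields
\[
\delta^*(w)^{2-\alpha}|S_F(w)|\lesssim \Vert\mu-\nu\Vert_{\infty,\alpha}\,\delta^*(w)^{2-\alpha}\iint_Q\frac{\delta(\omega)^\alpha}{|\omega-w|^4}\,du\,dv,
\]
so everything comes down to the Whitney-type estimate $\iint_Q \delta(\omega)^\alpha|\omega-w|^{-4}\,du\,dv\lesssim \delta^*(w)^{\alpha-2}$ for $w\in Q^*$. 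This is the flat estimate $\iint_{\D}(1-|\zeta|)^\alpha|\zeta-z|^{-4}\,d\xi\,d\eta\asymp(|z|-1)^{\alpha-2}$ on the disk, transported to $Q$ through the comparabilities of the previous paragraph; the implied constant depends only on $\nu$ through $\alpha'$, $\Vert\nu\Vert_\infty$ and $\Vert\nu\Vert_{\infty,\alpha'}$.

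The main obstacle lies in these last two steps: setting up the Astala--Zinsmeister formula so that the distortion factors from $F$ and the Jacobian of $f_\nu$ are genuinely absorbed into constants depending only on $\nu$ and $k$, and then carrying out the singular-integral estimate with precisely this dependence. The delicate point throughout is the bookkeeping that keeps $C$ free of any dependence on $\Vert\mu-\nu\Vert_{\infty,\alpha}$; this is exactly what forces the right translation to be taken at the fixed base point $\nu$, freezing $Q$, $f_\nu$ and all distortion constants, and what forces the two exponents to be kept apart, so that the geometry and the weight transfer use $\alpha'$ while the decay of the integral uses $\alpha$. Specializing to $\nu\in\Bel_0^\alpha(\D)$ and letting $\mu\to\nu$ then gives the continuity of $\Phi:\Bel_0^\alpha(\D)\to B_0^\alpha(\D^*)$ needed in Theorem \ref{complexst}.
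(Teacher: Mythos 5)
Your proposal follows essentially the same route as the paper's proof: right-translating at the fixed base point $\nu$, transferring the weight $\rho_{\D}^{-\alpha}$ to $\Omega=f_\nu(\D)$ via Theorem \ref{mori} and Proposition \ref{distortion} (through the factorization $f_\nu|_{\D}=(f_\nu\circ(f^\nu)^{-1})\circ f^\nu$), invoking the Astala--Zinsmeister-type representation, carrying out the Whitney-type singular-integral estimate over $\Omega$, and transferring back to $\D^*$ with Proposition \ref{distortion}(1). The only cosmetic deviation is that the representation actually used (Proposition \ref{yanagishita}) is the quadratic $L^2$-form --- the integrand is $|\mu-\nu|^2\sim\delta(\omega)^{2\alpha}$ under a square root rather than $\delta(\omega)^{\alpha}$ linearly --- which changes the intermediate exponents but yields the same final bound, and the normalization argument (Schwarz lemma, Koebe, Teichm\"uller's cross-ratio distortion) needed to apply Proposition \ref{distortion}(2) with constants depending only on $\nu$ is left implicit.
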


The following integral representation of Schwarzian derivatives can be found in 
Lemma 3.1 and Proposition 3.2 of Yanagishita \cite{Yan},
which is obtained by generalizing the arguments in Astala and Zinsmeister \cite{AZ}.

\begin{proposition}\label{yanagishita}
For Beltrami coefficients $\mu$ and $\nu$ in $\Bel(\D)$, let $f_\mu$ and $f_\nu$ be the normalized quasiconformal self-homeomorphisms of $\Chat$
that are conformal on $\D^*$. Let $\Omega=f_\nu(\D)$ and $\Omega^*=f_\nu(\D^*)$. Then,
\begin{align*}
|S_{f_\mu \circ f_\nu^{-1}|_{\Omega^*}}(\zeta)| 
&\leq \frac{3\rho_{\Omega^*}(\zeta)}{\sqrt{\pi}}
\left(\int_\Omega \frac{|\mu(f_\nu^{-1}(w))-\nu(f_\nu^{-1}(w))|^2}{(1-|\mu(f_\nu^{-1}(w))|^2)(1-|\nu(f_\nu^{-1}(w))|^2)}
\frac{dudv}{|w-\zeta|^4}\right)^{1/2}
\end{align*}
holds for every $\zeta \in \Omega^*$.
\end{proposition}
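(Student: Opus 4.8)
The plan is to reprove the representation by the route of Astala and Zinsmeister \cite{AZ}, carried out for a general base point $\nu$ rather than for $\nu=0$ as in Yanagishita \cite{Yan}. Throughout write $F=f_\mu\circ f_\nu^{-1}$, which is a quasiconformal automorphism of $\Chat$ that is conformal on $\Omega^*=f_\nu(\D^*)$, fixes $\infty$ with $\lim_{w\to\infty}F'(w)=1$, and whose complex dilatation $\kappa=\mu_F$ is supported on the bounded quasidisk $\Omega=f_\nu(\D)$. By the chain rule for complex dilatations recalled in Section \ref{2}, at $w=f_\nu(z)$ one has $|\kappa(w)|=|\mu(z)-\nu(z)|/|1-\overline{\nu(z)}\mu(z)|$, and the elementary disk identity $1-|\kappa(w)|^2=(1-|\mu(z)|^2)(1-|\nu(z)|^2)/|1-\overline{\nu(z)}\mu(z)|^2$ gives
\[
\frac{|\kappa(w)|^2}{1-|\kappa(w)|^2}=\frac{|\mu(z)-\nu(z)|^2}{(1-|\mu(z)|^2)(1-|\nu(z)|^2)}.
\]
Thus the integrand on the right-hand side of the assertion is exactly $\frac{|\kappa(w)|^2}{1-|\kappa(w)|^2}\,|w-\zeta|^{-4}$, and the whole problem reduces to bounding $|S_F(\zeta)|$ for $\zeta\in\Omega^*$ by the hyperbolic density times the square root of this weighted energy of $\kappa$.

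The main step would be an exact integral representation
\[
S_F(\zeta)=\frac{c}{\pi}\int_\Omega \frac{\Psi_F(w)}{(w-\zeta)^4}\,du\,dv,\qquad \zeta\in\Omega^*,
\]
with an absolute constant $c$ and a measurable integrand obeying the pointwise bound $|\Psi_F(w)|\le |\kappa(w)|/\sqrt{1-|\kappa(w)|^2}$. I would obtain it by applying the Cauchy--Pompeiu formula to $\log F'$, which is holomorphic on $\Omega^*$ and has its $\bar\partial$-derivative supported on $\Omega$, and then differentiating three times in $\zeta$ and integrating by parts to transfer the derivatives onto the Cauchy kernel, using $\partial_\zeta^{\,3}(w-\zeta)^{-1}=6\,(w-\zeta)^{-4}$ (this is the source of the fourth-order kernel and of the numerical constant). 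The nonlinear term $-\tfrac12(F''/F')^2$ in $S_F=(F''/F')'-\tfrac12(F''/F')^2$ has to be folded into $\Psi_F$, and the weight $1/\sqrt{1-|\kappa|^2}$ arises from the Jacobian $J_F=|\partial F|^2(1-|\kappa|^2)$ when the representation is written against area in $\Omega$. In terms of $\mu,\nu$ the bound reads $|\Psi_F(w)|\le |\mu(z)-\nu(z)|/\sqrt{(1-|\mu(z)|^2)(1-|\nu(z)|^2)}$, precisely the square root of the integrand above.

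Granting this, the estimate follows from Cauchy--Schwarz applied to the splitting
\[
\frac{|\Psi_F(w)|}{|w-\zeta|^4}\le \frac{|\kappa(w)|}{\sqrt{1-|\kappa(w)|^2}\,|w-\zeta|^2}\cdot\frac{1}{|w-\zeta|^2},
\]
so that one factor is exactly the energy appearing in the statement and the other is $\big(\int_\Omega |w-\zeta|^{-4}\,du\,dv\big)^{1/2}$. The remaining kernel integral I would evaluate by the reproducing property of the Bergman space: on the round disk a direct expansion of $(w-\zeta)^{-2}$ in powers of $w$ gives $\int_\D |w-\zeta|^{-4}\,du\,dv=\pi/(|\zeta|^2-1)^2=\tfrac{\pi}{4}\rho_{\D^*}^2(\zeta)$, and for the quasidisk $\Omega^*$ the comparison $\rho_{\Omega^*}(\zeta)\asymp \mathrm{dist}(\zeta,\partial\Omega)^{-1}$ (Koebe distortion, Proposition \ref{Koebe}) bounds $\int_\Omega |w-\zeta|^{-4}\,du\,dv$ by a constant multiple of $\rho_{\Omega^*}^2(\zeta)$. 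Collecting the constants produces the factor $6\rho_{\Omega^*}(\zeta)/\sqrt{\pi}$.

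The hard part will be the exact representation of the middle paragraph, for two reasons. First, $\partial\Omega=\partial\Omega^*$ is only a quasicircle, so the Cauchy--Pompeiu identity and the integrations by parts must be justified with care: I would first prove it for $\kappa$ of class $C^1$ vanishing near $\partial\Omega$, using the normalization $F'\to 1$ at $\infty$ to kill the contributions from large circles, and then pass to general $\kappa\in\Bel(\D)$ by approximation and a dominated-convergence argument based on the very energy bound being proved. Second, one must check that after incorporating the quadratic term the integrand still satisfies the clean pointwise bound $|\Psi_F|\le|\kappa|/\sqrt{1-|\kappa|^2}$ uniformly, with no hidden dependence on $\|\kappa\|_\infty$ outside the integral; this is exactly the delicate point in \cite{AZ} and \cite{Yan}, and is what makes the final constant independent of the base point $\nu$.
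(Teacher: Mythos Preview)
The paper does not prove this proposition; it simply quotes it from Yanagishita \cite[Lemma 3.1, Proposition 3.2]{Yan} as a tool for Lemma \ref{basic}. Your outline has the right three-step architecture (integral representation, Cauchy--Schwarz, kernel estimate), but the way you propose to carry out the first step will not work, and this forces errors in the other two.

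The gap is in the integral representation. You suggest applying Cauchy--Pompeiu to $\log F'$ and then ``folding'' the nonlinear term $-\tfrac12(F''/F')^2$ into a single integrand $\Psi_F$ with $|\Psi_F|\le |\kappa|/\sqrt{1-|\kappa|^2}$. But $\log F'$ is not even defined on $\Omega$, where $F$ is merely quasiconformal, so there is nothing to which Cauchy--Pompeiu can be applied; and the quadratic term, being a product of two integrals, cannot be absorbed into one integral against $(w-\zeta)^{-4}$ with a pointwise-bounded integrand. The actual route applies Cauchy--Pompeiu to the globally defined function
\[
H(w)=\frac{F'(\zeta)}{F(w)-F(\zeta)}-\frac{1}{w-\zeta},
\]
whose $\bar\partial$ is supported in $\Omega$ with $\bar\partial_w H=-F'(\zeta)\kappa\,\partial F/(F-F(\zeta))^2$. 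Differentiating once in $w$ and letting $w\to\zeta$ uses the classical identity $\lim_{w\to\zeta}\big[(w-\zeta)^{-2}-F'(\zeta)F'(w)/(F(w)-F(\zeta))^2\big]=-\tfrac16 S_F(\zeta)$, so the full Schwarzian (nonlinear term included) appears automatically, and the representation has the mixed kernel $(w-\zeta)^{-2}(F(w)-F(\zeta))^{-2}$ rather than $(w-\zeta)^{-4}$.

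This also fixes your third step. With the mixed kernel, the Cauchy--Schwarz split leaves the second factor as $|F'(\zeta)|^2\int_\Omega J_F\,|F(w)-F(\zeta)|^{-4}$, which after the change of variables $\eta=F(w)$ becomes $|F'(\zeta)|^2\int_{F(\Omega)}|\eta-F(\zeta)|^{-4}\,d\eta$. The area theorem (not Koebe) gives $\int_D |w-\zeta|^{-4}\le \tfrac{\pi}{4}\rho_{D^*}^2(\zeta)$ for \emph{any} bounded simply connected $D$, and conformal invariance of the hyperbolic density under $F|_{\Omega^*}$ converts $|F'(\zeta)|\rho_{F(\Omega^*)}(F(\zeta))$ back to $\rho_{\Omega^*}(\zeta)$. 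This is what makes the constant absolute; your Koebe argument on the quasidisk would introduce a dependence on the geometry of $\Omega$, which is exactly what must be avoided.
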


To consider the norm of the Schwarzian derivative $\Phi(\mu)=S_{f_\mu|_{\D^*}}$, we need
an estimate of the derivative of the conformal homeomorphism $f_\mu$ of $\D^*$ defined by $\mu \in \Bel_0^\alpha(\D)$.
We use Proposition \ref{distortion} for this purpose.

\medskip
\noindent
{\it Proof of Lemma \ref{basic}.}
By the definition of the norm,
$$
|\mu(z)-\nu(z)| \leq  \rho_{\D}^{-\alpha}(z) \Vert \mu-\nu \Vert_{\infty,\alpha}
$$
for every $z \in \D$. 
By Theorem \ref{mori}, 
there is a constant $a=a(\alpha,\nu) \geq 1$ 
such that
$\rho_{\D}^{-\alpha}(z) \leq a \rho_{\D}^{-\alpha}(f^\nu(z))$.

Let
$f=f_\nu \circ (f^\nu)^{-1}$. This is a conformal homeomorphism of $\D$ extending to a
quasiconformal homeomorphism of $\Chat$ whose complex dilatation on $\D^*$ coincides with $(\nu^*)^{-1}$. 
We can choose $f_\nu$ so that $f(0)=0$ maintaining the normalization
$f_{\nu}(\infty)=\infty$ and $\lim_{z \to \infty} f'_\nu(z)=1$.
We note that $f(\D)=f_\nu(\D)$. 

If the normalization of $f_\nu$ appeals to the Schwarz lemma and the
Koebe one-quarter theorem (Proposition \ref{Koebe}) on $\D^*$, we see that
$f_\nu(\D)$ is not strictly contained in $\D$ but is instead contained in the disk $\{|z|<4\}$. Hence, 
there is some $x_1 \in \S1$ such that
$1 \leq |f_\nu(x_1)| \leq 4$. Furthermore, Proposition \ref{Teich} asserts that
there is some $r \in [0,1)$ 
depending only on $\Vert \nu^{-1} \Vert_\infty=\Vert \nu \Vert_\infty$ such that $|(f^{\nu})^{-1}(0)| \leq r$.
We take $z \in \D$ with $|z|=(1+r)/2$ arbitrarily and consider the cross ratio
$[(f^{\nu})^{-1}(0),x_1,\infty,z]$. 
By the distortion theorem for cross ratio due to Teich\-m\"ul\-ler (see Section III.D of \cite{Ah0} and \cite{Kra}), 
the hyperbolic distance on $\C-\{0,1\}$ between $[(f^{\nu})^{-1}(0),x_1,\infty,z]$ and 
$$
[f_\nu((f^{\nu})^{-1}(0)),f_\nu(x_1),f_\nu(\infty),f_\nu(z)]=[0,f_\nu(x_1),\infty,f_\nu(z)]
$$ 
is bounded by $\log K$, 
where $K=(1+\Vert \nu \Vert_{\infty})/(1-\Vert \nu \Vert_{\infty})$.
This implies that there is a constant $\rho=\rho(\Vert \nu \Vert_{\infty})>0$ 
such that
$|f_\nu(z)| \geq \rho$ for $|z|=(1+r)/2$, and hence $f(\D)=f_\nu(\D)$ contains the disk of center at $0$ and 
radius $\rho$. 

By the Schwarz lemma applied to the conformal homeomorphism $f$ of $\D$,
we see that there is a constant $s=s(\rho)>0$ depending only on $\rho$ and hence on $\Vert \nu \Vert_{\infty}$ such that
$e^{-s} \leq |f'(0)| \leq 4$. 
It follows from
Proposition \ref{distortion} that there is a constant $B=B(\nu)>0$ such that
$|f'(z)| \geq 1/B$ for every $z \in \D$.
Hence,
there is a constant $b=b(\nu,\alpha) \geq 1$ such that
$\rho_{\D}^{-\alpha}(f^\nu(z)) \leq b \rho_{\Omega}^{-\alpha}(f_\nu(z))$.

For $w=f_\nu(z) \in \Omega$, this inequality and $\rho_{\D}^{-\alpha}(z) \leq a \rho_{\D}^{-\alpha}(f^\nu(z))$ yield that
$$
|\mu(f_\nu^{-1}(w))-\nu(f_\nu^{-1}(w))| \leq ab \rho_{\Omega}^{-\alpha}(w) \Vert \mu-\nu \Vert_{\infty,\alpha}.
$$
By substituting this inequality into the integral in Proposition \ref{yanagishita}, we will estimate 
$$
\left(\int_\Omega \frac{\rho_{\Omega}^{-2\alpha}(w)}{|w-\zeta|^4}dudv\right)^{1/2}.
$$
We follow an estimation procedure similar to that in Section 3.4 of Nag \cite{Nag}.
Let $\eta_\Omega(w)$ be the Euclidean distance from $w \in \Omega$ to $\partial \Omega$ and
$\eta_{\Omega^*}(\zeta)$ the Euclidean distance from $\zeta \in \Omega^*$ to $\partial \Omega$.
We see, as a consequence of the Koebe one-quarter theorem (Proposition \ref{Koebe}), that both $\rho_{\Omega}(w)\eta_{\Omega}(w)$
and $\rho_{\Omega^*}(\zeta)\eta_{\Omega^*}(\zeta)$ are bounded below by $1/2$.
We have
$$
\rho_{\Omega}^{-2\alpha}(w) \leq 4\eta_{\Omega}^{2\alpha}(w) \leq 4|w-\zeta|^{2\alpha}
$$
for every $w \in \Omega$ and every $\zeta \in \Omega^*$.
Hence, the integral can be estimated as
\begin{align*}
\int_\Omega \frac{\rho_{\Omega}^{-2\alpha}(w)}{|w-\zeta|^4}dudv &\leq  4\int_\Omega \frac{dudv}{|w-\zeta|^{4-2\alpha}}\\
&\leq
4\int_{|w-\zeta| \geq \eta_{\Omega^*}(\zeta)}
\frac{dudv}{|w-\zeta|^{4-2\alpha}}\\
&=
\frac{8\pi}{2-2\alpha} \cdot
\frac{1}{\eta_{\Omega^*}(\zeta)^{2-2\alpha}}
\leq
\frac{16\pi}{1-\alpha}\rho_{\Omega^*}(\zeta)^{2-2\alpha}.
\end{align*}

Plugging this estimate in the inequality of Proposition \ref{yanagishita}, we have
$$
\rho^{-2}_{\Omega^*}(\zeta)|S_{f_\mu \circ f_\nu^{-1}|_{\Omega^*}}(\zeta)| \leq 
\frac{12ab\Vert \mu-\nu \Vert_{\infty,\alpha}}{\sqrt{(1-\alpha)(1-\Vert \mu \Vert_\infty^2)(1-\Vert \nu \Vert_\infty^2)}}\rho^{-\alpha}_{\Omega^*}(\zeta).
$$
For $\zeta=f_\nu(z)$ with $z \in \D^*$, the left side term is equal to
$$
\rho^{-2}_{\D^*}(z)|S_{f_\mu|_{\D^*}}(z)-S_{f_\nu|_{\D^*}}(z)|.
$$
For the right side term, we apply Proposition \ref{distortion} again to
the quasiconformal homeomorphism $f_\nu$ of $\Chat$ that is conformal on $\D^*$. 
Then, there is a constant $b'=b'(\nu,\alpha) \geq 1$ 
such that
$\rho^{-\alpha}_{\Omega^*}(f_\nu(z)) \leq b' \rho^{-\alpha}_{\D^*}(z)$. Therefore, the above inequality turns out to be
$$
\rho^{-2}_{\D^*}(z)|S_{f_\mu|_{\D^*}}(z)-S_{f_\nu|_{\D^*}}(z)| \leq 
\frac{12abb'\Vert \mu-\nu \Vert_{\infty,\alpha}}{\sqrt{(1-\alpha)(1-\Vert \mu \Vert_\infty^2)(1-\Vert \nu \Vert_\infty^2)}}\rho^{-\alpha}_{\D^*}(z).
$$
This implies that
$$
\Vert \Phi(\mu)-\Phi(\nu) \Vert_{\infty,\alpha} \leq
\frac{12abb'}{\sqrt{(1-\alpha)(1-\Vert \mu \Vert_\infty^2)(1-\Vert \nu \Vert_\infty^2)}} \Vert \mu-\nu \Vert_{\infty,\alpha}.
$$
We can choose the multiplier of the term on the right side as the constant $C$. 
\qed
\medskip

The existence of a local continuous section for $\Phi:\Bel_0^\alpha(\D) \to \beta(T) \cap B_0^\alpha(\D^*)$
is verified similarly to the original Bers projection $\Phi$, for which
the local holomorphic section was defined by using the quasiconformal reflection proposed by Ahlfors \cite{Ah}. 
This was improved later by Earle and Nag \cite{EN}.

\begin{lemma}\label{localsection}
The Bers projection
$\Phi:\Bel_0^\alpha(\D) \to \beta(T) \cap B_0^\alpha(\D^*)$ has a local continuous section
at every $\psi \in \beta(T) \cap B_0^\alpha(\D^*)$.
\end{lemma}

\begin{proof}
By Theorem \ref{alphasection}, we can take $\nu \in \Bel_0^\alpha(\D)$ 
in the image of the conformally natural projection such that $\Phi(\nu)=S_{f_{\nu}|_{\D^*}}=\psi$ and
$f_{\nu}|_{\D}$ is a bi-Lipschitz diffeomorphism
with respect to the
hyperbolic metric 
(Theorem 2 of \cite{DE}). 
The quasiconformal reflection
$\lambda:f_{\nu}(\D) \to f_{\nu}(\D^*)$ with respect to the quasicircle $f_{\nu}(\S1)$ is defined by
$\lambda(\zeta)=f_{\nu}(f_{\nu}^{-1}(\zeta)^*)$,
where $z^*$ denotes the reflection of $z$ with respect to $\S1$. 

We follow the arguments in Section II.4.2 of \cite{Leh} and Section 14.3--4 of \cite{GL}. 
We have a constant $\varepsilon=\varepsilon(k)>0$ depending only on $k$ with $\Vert \nu \Vert_{\infty} \leq k$
such that if $\varphi \in B(\D^*)$ satisfies $\Vert \varphi \Vert_{\infty} <\varepsilon$, then
there is a quasiconformal self-homeomorphism $\widehat f$ of $\Chat$ conformal on $f_{\nu}(\D^*)$ such that
$S_{\widehat f \circ f_{\nu}|_{\D^*}}=\psi+\varphi$ (see also Theorem III.4.2 in \cite{Leh}).
In this case, 
the Beltrami coefficient $\mu_{\widehat f}$ of $\widehat f$ 
is given by
\begin{align*}
\mu_{\widehat f}(\zeta)
&=\frac{S_{\widehat f}(\lambda(\zeta))(\zeta-\lambda(\zeta))^2 \bar \partial \lambda(\zeta)}
{2+S_{\widehat f}(\lambda(\zeta))(\zeta-\lambda(\zeta))^2 \partial \lambda(\zeta)}
\end{align*}
for $\zeta \in f_{\nu}(\D)$. Here, by the bi-Lipschitz property, we see that
$$
|(\zeta-\lambda(\zeta))^2 \partial \lambda(\zeta)| \leq 
|(\zeta-\lambda(\zeta))^2 \bar \partial \lambda(\zeta)| \leq c \rho_{f_{\nu}(\D^*)}^{-2}(\lambda(\zeta))
$$
for some constant $c=c(k)>0$. Then, by replacing $\varepsilon>0$ so that
$\varepsilon \leq 1/c$ if necessary, we have 
\begin{align*}
&\quad\ |S_{\widehat f}(\lambda(\zeta))(\zeta-\lambda(\zeta))^2 \partial \lambda(\zeta)|\\
&=|\varphi(f_{\nu}^{-1}(\lambda(\zeta)))((f_{\nu}^{-1})'(\lambda(\zeta)))^2(\zeta-\lambda(\zeta))^2 \partial \lambda(\zeta)|\\
&\leq c|\varphi(z^*)||f_{\nu}'(z^*)|^{-2} \rho_{f_{\nu}(\D^*)}^{-2}(f_{\nu}(z^*))
\leq \frac{1}{\varepsilon}\,|\varphi(z^*)| \rho_{\D^*}^{-2}(z^*) <1
\end{align*}
for $\zeta =f_{\nu}(z)$; hence
\begin{align*}
|\mu_{\widehat f}(f_{\nu}(z))| 
&\leq |S_{\widehat f}(\lambda(\zeta))(\zeta-\lambda(\zeta))^2 \bar \partial \lambda(\zeta)|
\leq \frac{1}{\varepsilon}\,|\varphi(z^*)| \rho_{\D^*}^{-2}(z^*)
\end{align*}
for every $z \in \D$. 

Now we take $\varphi \in B_0^\alpha(\D^*)$ such that 
$\Vert \varphi \Vert_{\infty} \leq \Vert \varphi \Vert_{\infty,\alpha} <\varepsilon$.
Then, we can apply the above argument to obtain 
$$
|\mu_{\widehat f}(f_{\nu}(z))| \leq \frac{1}{\varepsilon}\,|\varphi(z^*)| \rho_{\D^*}^{-2}(z^*)
\leq \frac{\Vert \varphi \Vert_{\infty,\alpha}}{\varepsilon}\, \rho_{\D^*}^{-\alpha}(z^*)
< |z|^{-2\alpha} \rho_{\D}^{-\alpha}(z).
$$
We use this estimate when $|z|$ is bounded away from $0$. 
When $|z|$ is small, for example, 
if $|z| < 1/\sqrt{2}$, then
$|\mu_{\widehat f}(f_{\nu}(z))| <1 < 4\rho_{\D}^{-\alpha}(z)$.
Thus, we see that $\mu_{\widehat f} \circ f_{\nu} \in \Bel_0^\alpha(\D)$.

Denoting the complex dilatation of $\widehat f \circ f_{\nu}$ by $\mu_{\varphi}$,  we will
show that $\mu_{\varphi} \in \Bel_0^\alpha(\D)$.
The formula for the complex dilatation of composed quasiconformal homeo\-morphisms is 
$$
\mu_{\varphi}(z)=\frac{e^{-2i\theta}\mu_{\widehat f}(f_\nu(z))+\nu(z)}{1+e^{-2i\theta}\mu_{\widehat f}(f_\nu(z))\overline{\nu(z)}}
\quad (z \in \D),
$$
where $\theta={\rm arg}\,\partial f_\nu(z)$. Then, similarly to the proof of Proposition \ref{r-translation}, we have
$$
|\mu_{\varphi}(z)-\mu_{\varphi'}(z)| \leq \frac{|\mu_{\widehat f} \circ f_{\nu}(z)-\mu_{\widehat f'} \circ f_{\nu}(z)|}{\sqrt{(1-\Vert\mu_{\widehat f} \Vert_\infty^2)(1-\Vert \mu_{\widehat f'}\Vert_\infty^2)}}
$$
for any $\varphi$ and $\varphi'$ in $B_0^\alpha(\D^*)$ with $\Vert \varphi \Vert_{\infty,\alpha},
\Vert \varphi' \Vert_{\infty,\alpha} <\varepsilon$, where $\widehat f$ and $\widehat f'$ are the corresponding
quasiconformal homeomorphisms. In particular, setting $\varphi'=0$ yields
$$
|\mu_{\varphi}(z)-\nu(z)| \leq \frac{|\mu_{\widehat f} \circ f_{\nu}(z)|}{\sqrt{(1-\Vert \mu_{\widehat f}\Vert_\infty^2)}}.
$$
As both $\mu_{\widehat f} \circ f_{\nu}$ and $\nu$ belong to $\Bel_0^\alpha(\D)$, so does $\mu_{\varphi}$.

Because
$\Phi(\mu_\varphi)=S_{\widehat f \circ f_{\nu}|_{\D^*}}=\psi+\varphi$,  
we have a local section $\eta$ of $\Phi$ on the neighborhood
$$
U(\psi,\varepsilon)=\{\psi+\varphi \mid \Vert \varphi \Vert_{\infty,\alpha}<\varepsilon\} \subset \beta(T) \cap B_0^\alpha(\D^*)
$$
by the correspondence $\eta:\psi+\varphi \mapsto \mu_\varphi$.
By the above inequalities for $\mu_{\varphi}$ and $\mu_{\widehat f} \circ f_{\nu}$, we see that
$$
|\mu_{\varphi}(z)-\mu_{\varphi'}(z)| \leq C |\varphi(z^*)-\varphi'(z^*)| \rho_{\D^*}^{-2}(z^*) \quad (z \in \D)
$$
for some constant $C>0$. This implies that $\eta$ is continuous.
\end{proof}

We have obtained the continuity of the Bers projection $\Phi$ and its local section
restricted to $\Bel_0^\alpha(\D)$ and $\beta(T) \cap B_0^\alpha(\D^*)$, respectively,
with respect to the norm $\Vert \cdot \Vert_{\infty,\alpha}$.
We note that the local section at $\psi \in \beta(T) \cap B_0^\alpha(\D^*)$ can be chosen so that $\psi$ is sent to an arbitrary point
in the fiber $\Phi^{-1}(\psi)$ by post-composition of the right translation $r_\lambda$ for $\lambda \in \pi^{-1}([\id])$.

These maps are given in the same form as the original ones for $\Bel(\D)$ and $B(\D^*)$.
Moreover, we know that these maps are holomorphic on $\Bel(\D)$ and $B(\D^*)$ with respect to the norm $\Vert \cdot \Vert_{\infty}$
(Theorem \ref{hol-sub}).
Once we are in this situation, to see that the new maps are in fact holomorphic is a matter of general argument.
Indeed, $\Phi$ and its local section are holomorphic as mappings to $\C$ 
if we fix the complex variable $z$ of functions $\varphi(z) \in B_0^\alpha(\D^*)$ and $\mu(z) \in \Bel_0^\alpha(\D)$.
Then, the norm inequality $\Vert \cdot \Vert_{\infty} \leq \Vert \cdot \Vert_{\infty,\alpha}$
and the continuity under $\Vert \cdot \Vert_{\infty,\alpha}$ justify the claim (see Lemma 3.4 in Earle \cite{E1} 
and Lemma V.5.1 in Lehto \cite{Leh}).

\begin{theorem}
The Bers projection $\Phi:\Bel_0^\alpha(\D) \to \beta(T) \cap B_0^\alpha(\D^*)$ is a holomorphic split submersion. 
\end{theorem}

Moreover, we have seen in Proposition \ref{r-translation} that the right translation and, hence, the base point change map 
are homeomorphisms.
By the same reasoning as above, we also see that they are biholomorphic.

\begin{corollary}\label{basepointchange}
The right translation $r_\nu:\Bel_0^\alpha(\D) \to \Bel_0^\alpha(\D)$ for $\nu \in \Bel_0^\alpha(\D)$ and
the base point change map $R_\tau: T^\alpha_0 \to T^\alpha_0$ for $\tau \in T^\alpha_0$ are biholomorphic.
\end{corollary}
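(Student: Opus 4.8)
The plan is to upgrade the homeomorphisms already in hand --- Proposition \ref{r-translation} for $r_\nu$, and the homeomorphy of $R_\tau$ noted after Lemma \ref{open} via the openness of $\pi$ --- into biholomorphisms by invoking exactly the general principle used in the preceding paragraph. That principle asserts that a map between domains of our Banach spaces which is continuous with respect to $\Vert\cdot\Vert_{\infty,\alpha}$ and holomorphic \emph{pointwise} --- i.e. holomorphic as a $\C$-valued map once the complex variable $z$ of the Beltrami coefficients (or Schwarzian derivatives) is fixed --- is automatically holomorphic as a map of Banach domains. This is the content of Earle \cite[Lemma 3.4]{E1} and Lehto \cite[Lemma V.5.1]{Leh}, and the norm inequality $\Vert\cdot\Vert_\infty\leq\Vert\cdot\Vert_{\infty,\alpha}$ is what allows the $\C$-valued holomorphy to pass to holomorphy in the finer norm.

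First I would treat $r_\nu$. For fixed $z\in\D$ the chain-rule formula
$$
r_\nu(\mu)(f^\nu(z))=\frac{\mu(z)-\nu(z)}{1-\overline{\nu(z)}\mu(z)}\cdot\frac{\partial f^{\nu}(z)}{\overline{\partial f^{\nu}(z)}}
$$
exhibits $r_\nu(\mu)(f^\nu(z))$ as a holomorphic function of the single variable $\mu(z)$, a M\"obius expression that is finite because $|\nu(z)|<1$; thus $r_\nu$ is pointwise holomorphic. Since $r_\nu$ maps $\Bel_0^\alpha(\D)$ into itself by Proposition \ref{group} and is continuous in $\Vert\cdot\Vert_{\infty,\alpha}$ by Proposition \ref{r-translation}, the principle above yields that $r_\nu$ is holomorphic. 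Its inverse is $r_{\nu^{-1}}$, and since $\nu^{-1}\in\Bel_0^\alpha(\D)$ by Corollary \ref{inverse}, the inverse has the same form and is holomorphic by the identical argument; hence $r_\nu$ is biholomorphic.

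Next I would treat $R_\tau$ by mirroring the computation carried out for the universal Teich\-m\"ul\-ler space. Because $\Phi:\Bel_0^\alpha(\D)\to\beta(T)\cap B_0^\alpha(\D^*)$ is a holomorphic submersion, as just established, it admits a holomorphic local section $\eta$ at each point of its image. Choosing $\nu\in\Bel_0^\alpha(\D)$ with $\pi(\nu)=\tau$ and writing, in a neighbourhood of an arbitrary point,
$$
R_\tau=\beta^{-1}\circ\Phi\circ r_\nu\circ\eta\circ\beta,
$$
I would exhibit $R_\tau$ locally as a composition of holomorphic maps: $\beta$ and $\beta^{-1}$ are the identification defining the complex structure of $T_0^\alpha$ (Theorem \ref{complexst}), $\eta$ and $\Phi$ are holomorphic by the submersion property, and $r_\nu$ is holomorphic by the previous paragraph. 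Hence $R_\tau$ is holomorphic, and since $R_\tau^{-1}=R_{\tau^{-1}}$ is of the same form, $R_\tau$ is biholomorphic.

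The one step that requires genuine care, rather than being purely formal, is the passage from pointwise $\C$-valued holomorphy to Banach-space holomorphy in the $\Vert\cdot\Vert_{\infty,\alpha}$ topology. This is precisely where the continuity supplied by Proposition \ref{r-translation} (and, for the second part, the continuity of $\Phi$ and of the section recorded in Lemma \ref{basic} and Lemma \ref{localsection}) is indispensable, together with local boundedness, in order to satisfy the hypotheses of the Earle--Lehto criterion. Everything else is a formal consequence of the holomorphic-submersion structure of $\Phi$ already in place.
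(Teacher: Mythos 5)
Your proposal is correct and follows essentially the same route as the paper: the paper's own (very terse) proof likewise upgrades the homeomorphy from Proposition \ref{r-translation} to holomorphy via the Earle--Lehto criterion (pointwise $\C$-valued holomorphy plus continuity in $\Vert\cdot\Vert_{\infty,\alpha}$, using $\Vert\cdot\Vert_\infty\leq\Vert\cdot\Vert_{\infty,\alpha}$), and handles $R_\tau$ through the factorization $\beta^{-1}\circ\Phi\circ r_\nu\circ\eta\circ\beta$ with the holomorphic local section $\eta$ exactly as in the universal case. You have merely made explicit the details the paper compresses into ``by the same reasoning as above.''
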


The Teich\-m\"ul\-ler space $T^\alpha_0$ is equipped with the Kobayashi metric as an invariant metric of
a complex manifold. By Theorem 1.1 in Yanagishita \cite{Yan0}, which generalizes the result
of Hu, Jiang, and Wang \cite{HJW}, we see that
the Kobayashi distance on $T^\alpha_0$ coincides with the restriction of the Teich\-m\"ul\-ler distance on $T$ to $T^\alpha_0$;
hence, the infinitesimal Kobayashi metric on each tangent space of $T^\alpha_0$ coincides with
its restriction of the infinitesimal Teich\-m\"ul\-ler metric on the tangent space of $T$.

Finally, in this section, we investigate the topology on $T^\alpha_0$, which has been defined 
as the quotient topology 
induced from the norm $\Vert \cdot \Vert_{\infty,\alpha}$ on $\Bel_0^\alpha(\D)$ by the Teich\-m\"ul\-ler projection $\pi$.
However, as $\Diff_+^{1+\alpha}(\S1)$ is equipped
with the right uniform topology, we can also introduce another topology on $T^\alpha_0$.
This topology is 
the relative topology under the 
identification of $T^\alpha_0$ with the subgroup $\Diff_+^{1+\alpha}(\S1) \cap \QS_*$ of all normalized elements.
We also call this the right uniform topology on $T^\alpha_0$.
Concerning the relation between these two topologies on $T^\alpha_0$, we have the following:

\begin{theorem}\label{topology}
The right uniform topology on $T^\alpha_0$ coincides with the quotient topology induced from $\Bel_0^\alpha(\D)$.
\end{theorem}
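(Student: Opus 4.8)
The plan is to reduce the comparison of the two topologies to a comparison of their neighborhood filters at the base point $[\id]$, and then to read off the required equivalence of these filters directly from the uniform estimates already established in Theorem \ref{key} and Theorem \ref{diff-qc}.

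First I would note that both topologies are homogeneous under the base point changes. For the quotient topology this is the content of Proposition \ref{r-translation} together with the openness of $\pi$ (Lemma \ref{open}): the right translation $r_\nu$ is a homeomorphism of $\Bel_0^\alpha(\D)$, and since $\pi$ is open and continuous, the induced base point change $R_{\pi(\nu)}=\pi\circ r_\nu$ is a homeomorphism of $T_0^\alpha$. For the $\Diff^{1+\alpha}$-topology, $\Diff_*^{1+\alpha}(\S1)$ is a topological subgroup of the topological group $\Diff^{1+\alpha}(\S1)$ (Proposition \ref{top}), so right translation by any fixed normalized element is again a homeomorphism. Crucially, under the identification $T_0^\alpha\cong \Diff_*^{1+\alpha}(\S1)$ the group law $\ast$ corresponds to composition of normalized maps, so the base point change $R_\tau\colon [g]\mapsto [g]\ast\tau^{-1}$ is literally the same map for both topologies. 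These maps act transitively on $T_0^\alpha$; hence it suffices to show that the two topologies induce the same neighborhood filter at $[\id]$.

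Next I would pin down a neighborhood base at $[\id]$ for each topology. Writing $B_\varepsilon=\{\mu\in\Bel_0^\alpha(\D)\mid \Vert\mu\Vert_{\infty,\alpha}<\varepsilon\}$, the openness and continuity of $\pi$ show that $\{\pi(B_\varepsilon)\}_{\varepsilon>0}$ is a neighborhood base at $[\id]$ for the quotient topology. For the $\Diff^{1+\alpha}$-topology a neighborhood base is given by $U_\delta=\{[g]\in T_0^\alpha\mid p_{1+\alpha}(g_*)<\delta\}$, where $g_*$ denotes the normalized representative of $[g]$. It then remains to prove the two mutual refinements: for each $\delta>0$ there is $\varepsilon>0$ with $\pi(B_\varepsilon)\subseteq U_\delta$, and for each $\varepsilon>0$ there is $\delta>0$ with $U_\delta\subseteq\pi(B_\varepsilon)$.

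The first refinement is exactly Theorem \ref{key}: if $\mu\in B_\varepsilon$, then the normalized representative $g_*$ of $\pi(\mu)$ satisfies $p_{1+\alpha}(g_*)<\delta$ once $\varepsilon$ is small enough, since $p_{1+\alpha}(g_*)\to 0$ uniformly as $\Vert\mu\Vert_{\infty,\alpha}\to 0$; hence $\pi(\mu)\in U_\delta$. The second refinement is Theorem \ref{diff-qc} combined with the trivial bound $c_\alpha(g_*)\le p_{1+\alpha}(g_*)$ coming from the definition of $p_{1+\alpha}$: if $[g]\in U_\delta$ then $c_\alpha(g_*)<\delta$, so the Beurling--Ahlfors extension $e_{\rm BA}(g_*)$ has complex dilatation $\mu\in\Bel_0^\alpha(\D)$ with $\Vert\mu\Vert_{\infty,\alpha}<\varepsilon$ provided $\delta$ is small enough, because $\Vert\mu\Vert_{\infty,\alpha}\to 0$ uniformly as $c_\alpha(g_*)\to 0$; since $\pi(\mu)=[g]$ this gives $[g]\in\pi(B_\varepsilon)$. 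Combining the two refinements shows the neighborhood filters at $[\id]$ agree, and by the homogeneity reduction the topologies coincide globally, with $T_0^\alpha$ a topological group for this common topology by Proposition \ref{top}. I expect essentially all of the analytic difficulty to have been absorbed into the uniform estimates of Theorems \ref{key} and \ref{diff-qc}; the only genuinely delicate point in the present argument is the reduction to the base point, which hinges on checking that the base point changes are the \emph{same} homeomorphisms for both topologies and that $\{\pi(B_\varepsilon)\}$ really is a neighborhood base, for which the openness of $\pi$ in Lemma \ref{open} is indispensable.
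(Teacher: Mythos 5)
Your proposal is correct and follows essentially the same route as the paper: both directions rest on exactly the same two inputs (Theorem \ref{key} for passing from $\Vert\mu\Vert_{\infty,\alpha}\to 0$ to $p_{1+\alpha}\to 0$, and Theorem \ref{diff-qc} for the converse via $c_\alpha$), with the reduction to the base point effected by the right translations $r_\nu$ and the base point changes, whose homeomorphy comes from Proposition \ref{r-translation} and the openness of $\pi$. The only difference is presentational — you phrase the comparison in terms of neighborhood filters at $[\id]$ and an explicit homogeneity reduction, whereas the paper argues with sequences and translates each convergent sequence to the base point — and this is mathematically equivalent here.
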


\begin{proof}
Suppose that $[g_n] \to [g]$ as $n \to \infty$ in the quotient topology on $T^\alpha_0$
for $g_n, g \in \Diff_+^{1+\alpha}(\S1) \cap \QS_*$. Then, there are $\mu_n$ and $\mu$ in $\Bel^\alpha_0(\D)$ with $\pi(\mu_n)=[g_n]$ and
$\pi(\mu)=[g]$ such that $\mu_n \to \mu$ with respect to $\Vert \cdot \Vert_{\infty,\alpha}$.
As the right translation $r_\mu$ is a homeomorphism of
$\Bel^\alpha_0(\D)$ by Proposition \ref{r-translation}, 
the condition $\mu_n \to \mu$ is equivalent to the condition $r_\mu(\mu_n)=\mu_n \ast \mu^{-1} \to 0$ in $\Bel^\alpha_0(\D)$.
Then, by Theorem \ref{key}, 
the normalized representatives $\gamma_n=g_n \circ g^{-1} \in \Diff_+^{1+\alpha}(\S1)\cap \QS_*$ with $[\gamma_n]=\pi(r_\mu(\mu_n))$
satisfy $p_{1+\alpha}(\gamma_n) \to 0$ as $n \to \infty$.
This means that
$\gamma_n$ converge to $\id$ in $\Diff_+^{1+\alpha}(\S1) \cap \QS_*$.
Hence, $[g_n]$ converge to $[g]$ in the right uniform topology on $T^\alpha_0$. 

Conversely, suppose that $[g_n] \to [g]$ as $n \to \infty$ in the right uniform topology on $T^\alpha_0$ for $g_n, g \in \Diff_+^{1+\alpha}(\S1) \cap \QS_*$. 
Then, $\gamma_n=g_n \circ g^{-1}$ converge to $\id$, 
that is,
$p_{1+\alpha}(\gamma_n) \to 0$. 
In particular, $c_\alpha(\gamma_n) \to 0$.
Then, by Theorem \ref{diff-qc},
we have quasiconformal extensions $f_n:\D \to \D$ of $\gamma_n$, whose complex dilatations $\nu_n$ satisfy
$\Vert \nu_n \Vert_{\infty,\alpha} \to 0$ as $n \to \infty$. Hence, $[\gamma_n]=[g_n] \ast [g]^{-1} \to [\id]$ in 
the quotient topology on $T^\alpha_0$, and thus
$[g_n] \to [g]$ by the continuity of 
the base point change map $R_{[g]^{-1}}$ of $T^\alpha_0$.
\end{proof}

Combined with Proposition \ref{top}, this implies the following:

\begin{corollary}
$(T^\alpha_0,\ast)$ is a topological group.
\end{corollary}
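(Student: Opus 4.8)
The plan is to reduce the statement to the two facts already in hand: that $\Diff^{1+\alpha}(\S1)$ is a topological group in its right uniform topology (Proposition \ref{top}), and that the quotient topology on $T^\alpha_0$ coincides with the $\Diff^{1+\alpha}$-topology (Theorem \ref{topology}). My first step is to pin down the identification of $T^\alpha_0$ with the subgroup $\Diff^{1+\alpha}_*(\S1)$ of normalized elements fixing $1$, $i$, $-1$. Composition of two normalized maps again fixes these three points, and so does the inverse of a normalized map; hence $\Diff^{1+\alpha}_*(\S1)$ is genuinely a subgroup of $\Diff^{1+\alpha}(\S1)$. I then need to confirm that the operation $\ast$ on $T^\alpha_0$ inherited from $\Bel^\alpha_0(\D)$ corresponds under this identification to ordinary composition: for $\nu_1, \nu_2 \in \Bel^\alpha_0(\D)$ the normalized solution $f^{\nu_1 \ast \nu_2}$ equals $f^{\nu_1} \circ f^{\nu_2}$, since the latter already has complex dilatation $\nu_1 \ast \nu_2$ and is normalized, so by uniqueness $\pi(\nu_1) \ast \pi(\nu_2) = [q(f^{\nu_1}) \circ q(f^{\nu_2})]$ is the class of the composition.

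With this identification secured, the second step is purely formal. By Theorem \ref{topology} the topology on $T^\alpha_0$ is exactly the subspace topology it carries as a subset of $\Diff^{1+\alpha}(\S1)$. Since $\Diff^{1+\alpha}(\S1)$ is a topological group, its multiplication and inversion maps are continuous, and their restrictions to the subgroup $\Diff^{1+\alpha}_*(\S1)$ remain continuous for the subspace topologies on source and target (a product of subspaces carries the subspace topology of the product, and a map into a subspace is continuous precisely when it is continuous into the ambient space). Thus multiplication $T^\alpha_0 \times T^\alpha_0 \to T^\alpha_0$ and inversion $T^\alpha_0 \to T^\alpha_0$ are continuous, which is exactly the assertion that $(T^\alpha_0, \ast)$ is a topological group.

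Because essentially all of the analytic content has been discharged in Theorem \ref{topology} (which in turn rests on Theorem \ref{key} and Theorem \ref{diff-qc}), I do not expect a genuine obstacle here; the only point demanding care is the bookkeeping of the first step, namely verifying that the group law $\ast$ transported from Beltrami coefficients agrees with composition of normalized diffeomorphisms. Once that is in place, the corollary follows immediately from the general principle that a subgroup of a topological group, endowed with the relative topology, is again a topological group, applied through the topological identification furnished by Theorem \ref{topology}.
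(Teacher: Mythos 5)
Your proposal is correct and follows essentially the same route as the paper, which derives the corollary by combining Theorem \ref{topology} (the quotient topology equals the relative $\Diff^{1+\alpha}$-topology on $\Diff^{1+\alpha}_*(\S1)$) with Proposition \ref{top} ($\Diff^{1+\alpha}(\S1)$ is a topological group). Your extra bookkeeping—checking that $\ast$ transported from $\Bel^\alpha_0(\D)$ agrees with composition of normalized diffeomorphisms, and that a subgroup with the relative topology is again a topological group—just makes explicit what the paper leaves implicit.
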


\bigskip

\end{document}